\documentclass[a4paper, 10pt, notitlepage]{article}

\usepackage{amsthm, amsmath, amssymb, latexsym}
\usepackage{mathrsfs,cite}
\usepackage[multiple]{footmisc}
\usepackage{graphicx}
\usepackage[ruled,vlined]{algorithm2e}

\theoremstyle{plain}
\newtheorem{theorem}{Theorem}[section]
\newtheorem{lemma}{Lemma}[section]
\newtheorem{corollary}{Corollary}[section]
\newtheorem{proposition}{Proposition}[section]

\theoremstyle{definition}
\newtheorem{definition}{Definition}[section]
\newtheorem{example}{Example}[section]

\theoremstyle{remark}
\newtheorem{remark}{Remark}[section]

\DeclareMathOperator{\co}{co}
\DeclareMathOperator{\cl}{cl}
\DeclareMathOperator{\dom}{dom}
\DeclareMathOperator{\interior}{int}
\DeclareMathOperator{\dist}{dist}
\DeclareMathOperator{\core}{core}
\DeclareMathOperator{\trace}{Tr}

\author{M.V. Dolgopolik\footnote{Institute for Problems in Mechanical Engineering, Saint Petersburg, Russia}
\footnote{This work was performed in IPME RAS and supported by the Russian Science Foundation 
(Grant No. 20-71-10032).}}
\title{Hypodifferentials of nonsmooth convex functions and their applications to nonsmooth convex optimization}

\begin{document}

\maketitle

\begin{abstract}
A hypodifferential is a compact family of affine mappings that defines a local max-type approximation of a nonsmooth
convex function. We present a general theory of hypodifferentials of nonsmooth convex functions defined on a Banach
space. In particular, we provide complete characterizations of hypodifferentiability and hypodifferentials of nonsmooth
convex functions, derive calculus rules for hypodifferentials, and study the Lipschitz continuity/Lipschitz
approximation property of hypodifferentials that can be viewed as a natural extension of the Lipschitz continuity of 
the gradient to the general nonsmooth setting. As an application of our theoretical results, we study the rate of
convergence of several versions of the method of hypodifferential descent for nonsmooth convex optimization and present
an accelerated version of this method having the faster rater of convergence $\mathcal{O}(1/k^2)$.
\end{abstract}

\section{Introduction}

Although the idea of approximating objective function with the use of a family of affine mappings has been successfully
used in the context of minimax optimization problems (see, e.g. \cite[Section~2.4]{Polak} and
\cite[Sections~2.3.1--2.3.3]{Nesterov_book}), in the general nonsmooth case only linear approximations are typically
used. For example, a subdifferential is always defined as a collection of some sort of local or global \textit{linear}
approximations/minorants of a nonsmooth function under consideration \cite{Penot,Thibault}. 

In the late 1980s, Demyanov \cite{Demyanov_InCollection_1988,Demyanov1988,Demyanov1989,DemyanovRubinov} extended 
the idea of approximating a nonsmooth function with the use of families of affine mappings to the case of general
nonsmooth functions by introducing the concept of \textit{codifferential}. Since then the theory of codifferentials has
found a number of applications in optimization, calculus of variations, nonsmooth mechanics, etc.
(see \cite{Quasidifferentiability_book,QuasidiffMechanics,Dolgopolik_COCV,Dolgopolik_COCV_2} and the references
therein).

In the convex case (and some other cases, e.g. in the cases of minimax and composite problems), codifferentials 
are reduced to the so-called \textit{hypodifferentials}, which can be defined as a compact family of affine mappings
that provides a local max-type approximation of a nonsmooth function. A numerical method based on the use of
hypodifferentials, called \textit{the method of hypodifferential descent} \cite{DemyanovRubinov}, has been applied to
various problems of the calculus of variations \cite{DemyanovTamasyan2011,DemyanovTamasyan2014} and optimal control
problems \cite{Fominyh,Fominyh_OptimLetters}. In \cite{Dolgopolik_HypodiffDescent}, it was shown that the method of
hypodifferential descent with Armijo's step size rule has the rate of convergence $\mathcal{O}(1/k)$ for general
nonsmooth convex functions, which exceeds the standard rate of convergence $\mathcal{O}(1/\sqrt{k})$ of subgradient
methods \cite{Nesterov_book}, although at the cost of significantly increased complexity of each iteration. The fact
that the method of hypodifferential descent has the same rate of convergence in the nonsmooth case as gradient descent
methods in the smooth case hints at the possibility of devising accelerated versions of this method that are similar to
accelerated gradient methods.

An accelerated version of the gradient descent method having the $\mathcal{O}(1/k^2)$ rate of convergence for smooth
convex functions with globally Lipschitz continuous gradient has been first developed by Nesterov \cite{Nesterov83}
(see also \cite{Nesterov_book}). Various improved versions of accelerated gradient descent methods have been studied,
e.g. in \cite{KimFessler2016,KimFessler2018}, while a constructive technique for devising accelerated methods for smooth
and nonsmooth convex minimization was proposed in \cite{DroriTaylor}. A general approach towards design and analysis
of accelerated gradient methods based on analysis of auxiliary ordinary differential equations and differential
equations solvers was recently developed in \cite{ShiDuJordanSu,LuoChen}.

As is well-known (see, e.g. \cite{Nesterov_book}), in the general nonsmooth case the rate of convergence of convex
minimization methods is significantly slower than in the smooth case and is equal to $\mathcal{O}(1/\sqrt{k})$. For a
fixed number of iterations, this rate of convergence is achieved by the celebrated Shor's subgradient method
\cite{Shor}, while when the number of iterations is not fixed it is achieved by many existing methods, such as
Nesterov's primal-dual subgradient methods \cite{Nesterov2009}, OSGA \cite{Neumaier}, a Kelley-like method from
\cite{DroriTeboulle} (see also restarted subgradient method \cite{YangLin}), etc. A significantly faster rate of
convergence can be achieved in the nonsmooth case with the use of some additional assumptions on the objective
function, such as H\"{o}lderian growth \cite{JohnstoneMoulin}, the validity of error bounds and the
Kurdyka-{\L}ojasiewicz inequality \cite{BolteNguyenPeypouquetSuter}, function growth condition measures \cite{FreundLu},
etc. A faster rate of convergence can also be achieved by exploiting a particular structure of a problem class under
consideration, as is done, e.g. in the context of Nesterov's smoothing \cite{Nesterov2005} and excessive gap
\cite{Nesterov2005_2} techniques and in the setting of composite convex optimization
\cite{Xu,Teboulle,TaylorHendrickxGlineur,GrapigliaNesterov}.

The main goal of this paper is twofold. Firstly, our aim is to develop a general theory of hypodifferentials of
nonsmooth convex functions defined on a Banach space. We provide complete characterizations of hypodifferentiability and
hypodifferentials of such functions in terms of subdifferentiability and subdifferentials in the sense of convex
analysis. We also present a detailed hypodifferential calculus and study the Lipschitz continuity/Lipschitz
approximation property of hypodifferential. In contrast to subdifferentials, hypodifferentials of many nonsmooth
convex functions arising in applications are continuous and often even Lipschitz continuous. Thus, the Lipschitz
continuity of a hypodifferential can be viewed as a natural extension of the property of the Lipschitz continuity of 
the gradient, which is widely used in smooth convex optimization, to the nonsmooth setting. We show that standard
operations on convex functions (including the pointwise maximum) preserve the Lipschitz continuity of hypodifferential
and, moreover, provide a general existence theorem for Lipschitz continuous hypodifferentials.

Secondly, as an application of our theoretical results, we aim at analysing the rate of convergence of various versions
of the method of hypodifferential descent in the nonsmooth convex case and presenting an accelerated version of this
method having a faster rate of convergence. Namely, we prove that the method of hypodifferential descent with constant
step size has the rate of convergence $\mathcal{O}(1/k)$ and show that under some additional assumptions a specific
choice of step size rule can ensure the linear convergence of the method. Following the standard approach to
accelerated methods developed by Nesterov \cite{Nesterov_book}, we also present an accelerated version of the method of 
hypodifferential descent for a class of nonsmooth convex functions that can be viewed as a natural extension of the
class of smooth convex functions having a globally Lipschitz continuous gradient to the nonsmooth case. We prove that
the accelerated method has the rate of convergence $\mathcal{O}(1/k^2)$ that coincides with the rate of convergence of
accelerated gradient methods. It must be pointed out that such rate of convergence is achieved by significantly
increasing the complexity of each iteration and each oracle call in comparison with classical subgradient/bundle
methods, which make the accelerated hypodifferential descent method inapplicable to large scale problems. Nonetheless,
this method can be viewed as a method that bridges the gap between nonsmooth and smooth convex optimization methods, at
least from the purely theoretical point of view.

Let us finally note that this paper is a continuation of the research started in \cite{Dolgopolik_HypodiffDescent}.
Some of the results presented below are completely new (e.g. the characterizations of hypodifferentiability and
hypodifferentials given in Theorem~\ref{thrm:HypodiffCharacterization}, some of the calculus rules and results on
Lipschitz property, the accelerated version of the method of hypodifferential descent, etc.), while others are
significantly revised and improved versions of the author's earlier results from
\cite{Dolgopolik_CodiffDescent,Dolgopolik_HypodiffDescent}.

The paper is organized as follows. Some necessary auxiliary results on the Pompeiu-Hausdorff distance are collected in
Section~\ref{sect:Preliminaries}. The hypodifferential calculus for nonsmooth convex functions is developed in 
Section~\ref{sect:HypodiffTheory}. Subsection~\ref{subsect:HypodiffMain} contains main definitions connected to
hypodifferentials, as well as the theorem providing convenient characterizations of hypodifferentiability and
hypodifferentials. Calculus rules for hypodifferentials are presented in Subsection~\ref{subsect:CalculusRules}, while
several particular examples are discussed in Subsection~\ref{subsect:Examples}.
Subsection~\ref{subsect:LipschitzProperty} is devoted to the analysis of Lipschitz continuity/Lipschitz approximation
property of hypodifferentials. Some applications of our theoretical results to nonsmooth convex optimization are
presented in Section~\ref{sect:Applications}. The rate of convergence of the method of hypodifferential descent with two
different step size rules is estimated in Subsection~\ref{subsect:MHD}, while an accelerated version of this method
having a faster rate of convergence is studied in Subsection~\ref{subsect:AcceleratedPHD}.

\section{Preliminaries}
\label{sect:Preliminaries}

Let us present some auxiliary results on the Pompeiu-Hausdorff metric and the continuity of set-valued maps with
respect to this metric that will be utilized throughout the article. It should be noted that all results presented
in this section are simple and well-known (see \cite{Thibault,RockafellarWets}), but scattered in the literature and
often presented either without proofs or only in the finite dimensional/compact-valued case. Therefore, for the sake of
completeness we provide short proofs of all these results.

Let $(M, d)$ be a metric space. Recall that the Pompeiu-Hausdorff distance between nonempty subsets $A, B \subseteq M$
is defined as
\[
  d_{PH}(A, B) = \sup\Big\{ \sup_{a \in A} \dist(a, B), \: \sup_{b \in B} \dist(b, A) \Big\},
\]
where $\dist(a, B) = \inf_{b \in B} d(a, b)$ is the distance between a point $a \in M$ and a set $B \subseteq M$. The
following simple result follows directly from the definition of the Pompeiu-Hausdorff distance.

\begin{proposition} \label{prp:PHdist}
Let $A, B \subseteq M$ be nonempty sets and $\varepsilon > 0$ be given. Then $d_{PH}(A, B) < \varepsilon$ if and only
if there exists $\eta \in (0, \varepsilon)$ for which the following two conditions hold true:
\begin{enumerate}
\item{for any $a \in A$ there exists $b \in B$ such that $d(a, b) \le \eta$;
}

\item{for any $b \in B$ there exists $a \in A$ such that $d(a, b) \le \eta$.
}
\end{enumerate}
\end{proposition}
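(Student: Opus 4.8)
The plan is to show that conditions 1 and 2 are essentially reformulations of the two one-sided bounds $\sup_{a\in A}\dist(a,B)\le\eta$ and $\sup_{b\in B}\dist(b,A)\le\eta$, and then to connect these to the strict inequality $d_{PH}(A,B)<\varepsilon$ by inserting a number $\eta$ strictly between $d_{PH}(A,B)$ and $\varepsilon$. Set $r=d_{PH}(A,B)=\max\{\sup_{a\in A}\dist(a,B),\,\sup_{b\in B}\dist(b,A)\}$; the whole argument reduces to careful bookkeeping with infima, since $\dist(a,B)$ is defined as an infimum that need not be attained, so ``$\dist(a,B)\le\eta$'' is a priori weaker than ``$d(a,b)\le\eta$ for some $b\in B$''.

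For the sufficiency direction I would assume $r<\varepsilon$ and choose any $\eta$ with $r<\eta<\varepsilon$; such $\eta$ exists because $r<\varepsilon$, and automatically $\eta>r\ge 0$, which secures the required $\eta\in(0,\varepsilon)$. Then for each $a\in A$ one has $\dist(a,B)\le r<\eta$, so by the definition of the infimum defining $\dist(a,B)$ there is $b\in B$ with $d(a,b)<\eta$, hence $d(a,b)\le\eta$, giving condition 1; the symmetric computation with $\dist(b,A)\le r<\eta$ yields condition 2. The key point is that I deliberately take $\eta$ \emph{strictly} above $r$ so that a strict inequality $\dist(a,B)<\eta$ is available, which is exactly what the infimum lets me realize by an actual point of $B$.

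For the necessity direction I would assume conditions 1 and 2 hold for some $\eta\in(0,\varepsilon)$. Condition 1 gives, for every $a\in A$, a point $b\in B$ with $\dist(a,B)\le d(a,b)\le\eta$, whence $\sup_{a\in A}\dist(a,B)\le\eta$; condition 2 gives $\sup_{b\in B}\dist(b,A)\le\eta$ in the same way. Taking the maximum yields $d_{PH}(A,B)\le\eta<\varepsilon$, as desired.

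The only genuine subtlety, and the step I would treat most carefully, is the non-attainment of the infimum in $\dist(\cdot,\cdot)$: one cannot simply read ``$d(a,b)\le\eta$ for some $b$'' off from ``$\dist(a,B)\le\eta$''. This is precisely why the choice $r<\eta<\varepsilon$ (rather than $\eta=r$) is essential in the sufficiency direction, and it is the reason the statement trades the non-strict pointwise inequalities $d(a,b)\le\eta$ against the strict global inequality $d_{PH}(A,B)<\varepsilon$. A secondary, purely cosmetic point is ensuring $\eta>0$ even when $r=0$, which is handled automatically once $\eta$ is taken strictly above $r$.
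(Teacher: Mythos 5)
Your proof is correct and follows essentially the same route as the paper's: choose $\eta$ strictly between $d_{PH}(A,B)$ and $\varepsilon$ so that the infimum definition of $\dist(\cdot,\cdot)$ yields an actual witnessing point, and for the converse pass from the pointwise bounds to the suprema. The only difference is presentational — you make the non-attainment subtlety explicit, which the paper handles implicitly.
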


Let $(X, d_X)$ be a metric space. Recall that a set-valued map $F \colon M \rightrightarrows X$ is called
\textit{continuous} in the Pompeiu-Hausdorff metric at a point $x \in M$, if for any $\varepsilon > 0$ there exists
$\delta > 0$ such that 
\[
  d_{PH}(F(x'), F(x)) < \varepsilon \quad \forall 
  x' \in B(x, \delta) := \big\{ y \in M \bigm| d(x, y) \le \delta \big\}.
\]
The map $F$ is called continuous in the Pompeiu-Hausdorff metric, if it is continuous at every point $x \in M$. Note
also that if one replaces $B(x, \delta)$ with a neighbourhood of $x$, then the definition above can be applied to the
case when $M$ is merely a topological space.

\begin{remark}
Below we will often call set-valued maps that are continuous in the Pompeiu-Hausdorff metric simply continuous, since we
will not consider any other notion of continuity for set-valued mappings in this paper. 
\end{remark}

Let us show that many standard operations on set-valued maps preserve continuity in the Pompeiu-Hausdorff metric.

\begin{proposition} \label{prp:SumContinuous}
Let $X$ be a normed space and $F, G \colon M \rightrightarrows X$ be continuous set-valued maps. Then the Minkowski sum
$H = F + G$ is continuous as well.
\end{proposition}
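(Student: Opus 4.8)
The plan is to fix an arbitrary point $x \in M$ and $\varepsilon > 0$, and to exhibit a $\delta > 0$ witnessing continuity of $H = F + G$ at $x$. Since $F$ and $G$ are continuous at $x$, I would first choose $\delta_1, \delta_2 > 0$ so that $d_{PH}(F(x'), F(x)) < \varepsilon/2$ for all $x' \in B(x, \delta_1)$ and $d_{PH}(G(x'), G(x)) < \varepsilon/2$ for all $x' \in B(x, \delta_2)$, and then set $\delta = \min\{\delta_1, \delta_2\}$. The idea is to avoid manipulating the suprema and infima in the definition of $d_{PH}$ directly, and instead to work entirely with the pointwise matching conditions furnished by Proposition~\ref{prp:PHdist}.

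Fix $x' \in B(x, \delta)$. Applying Proposition~\ref{prp:PHdist} to the pair $F(x'), F(x)$ yields some $\eta_1 \in (0, \varepsilon/2)$ such that every $f' \in F(x')$ admits $f \in F(x)$ with $\|f' - f\| \le \eta_1$, and vice versa; likewise I obtain $\eta_2 \in (0, \varepsilon/2)$ for the pair $G(x'), G(x)$. Now I would set $\eta = \eta_1 + \eta_2 < \varepsilon$ and verify the two matching conditions of Proposition~\ref{prp:PHdist} for $H(x')$ and $H(x)$. Any element of $H(x') = F(x') + G(x')$ can be written (non-uniquely, which is harmless) as $h' = f' + g'$ with $f' \in F(x')$ and $g' \in G(x')$. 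Matching $f'$ with some $f \in F(x)$ and $g'$ with some $g \in G(x)$ as above, the triangle inequality in $X$ gives
\[
  \| h' - (f + g) \| \le \| f' - f \| + \| g' - g \| \le \eta_1 + \eta_2 = \eta,
\]
and $f + g \in H(x)$; the reverse matching is obtained symmetrically by swapping the roles of $x'$ and $x$. Hence both conditions of Proposition~\ref{prp:PHdist} hold with this $\eta < \varepsilon$, so $d_{PH}(H(x'), H(x)) < \varepsilon$, which is exactly what continuity at $x$ requires.

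There is no genuine obstacle here: the argument amounts to the subadditivity estimate $d_{PH}(A + C, B + D) \le d_{PH}(A, B) + d_{PH}(C, D)$ for Minkowski sums of nonempty sets, packaged through Proposition~\ref{prp:PHdist}. The only points demanding mild care are bookkeeping ones: splitting $\varepsilon$ as $\varepsilon/2 + \varepsilon/2$ so that the final $\eta = \eta_1 + \eta_2$ stays strictly below $\varepsilon$, and noting that the possible non-uniqueness of the decomposition $h' = f' + g'$ is irrelevant, since a single valid decomposition suffices. Routing everything through the pointwise formulation of Proposition~\ref{prp:PHdist} is precisely what keeps the proof valid in the infinite-dimensional normed setting, where directly estimating the suprema in the definition of $d_{PH}$ would be more delicate.
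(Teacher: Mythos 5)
Your proof is correct and follows essentially the same route as the paper: split $\varepsilon$ into two halves, use Proposition~\ref{prp:PHdist} to match summands pointwise, apply the triangle inequality to the Minkowski decomposition, and conclude by symmetry. The only cosmetic difference is that you track two tolerances $\eta_1 + \eta_2$ where the paper uses a single $\eta$ bounded by $\varepsilon/2$ for both maps, which is if anything slightly cleaner.
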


\begin{proof}
Fix any $x \in M$ and $\varepsilon > 0$. Since $F$ and $G$ are continuous, one can find $\delta > 0$ such that
\[
  d_{PH}(F(x'), F(x)) < \frac{\varepsilon}{2}, \quad d_{PH}(G(x'), G(x)) < \frac{\varepsilon}{2}
  \quad \forall x' \in B(x, \delta).
\]
Choose any $x' \in B(x, \delta)$ and $y \in H(x')$. By definition there exist $y_1 \in F(x')$ and $y_2 \in G(x')$ such
that $y = y_1 + y_2$. By Proposition~\ref{prp:PHdist} there exist $\eta < \varepsilon / 2$, $z_1 \in F(x)$, and 
$z_2 \in G(x)$ such that $\| y_i - z_i \| \le \eta$, $i \in \{ 1, 2 \}$. Define $z = z_1 + z_2$. Clearly, 
$z \in H(x)$ and $\| y - z \| \le 2 \eta < \varepsilon$. Thus, for any $y \in H(x')$ we have found $z \in H(x)$ such
that $\| y - z \| \le 2 \eta < \varepsilon$. Swapping $x$ and $x'$ we get that for any $z \in H(x)$ one can find 
$y \in H(x')$ such that $\| y - z \| \le 2 \eta < \varepsilon$. Therefore, by Proposition~\ref{prp:PHdist} one has
$d_{PH}(H(x'), H(x)) < \varepsilon$, which implies that $H$ is continuous.
\end{proof}

\begin{proposition} \label{prp:ProductContinuous}
Let $X$ be a normed space, $F \colon M \rightrightarrows X$ be a continuous set-valued map, and 
$g \colon M \to \mathbb{R}$ be a continuous function. Suppose also that $F(x)$ is a bounded set for any $x \in X$. Then 
the set-valued map $H(\cdot) = g(\cdot) F(\cdot)$ is continuous.
\end{proposition}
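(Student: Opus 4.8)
The plan is to mimic the structure of the proof of Prp.~\ref{prp:SumContinuous}: fix $x \in M$ and $\varepsilon > 0$, and produce $\delta > 0$ so that every element of $H(x')$ is close to some element of $H(x)$ and vice versa, whence $d_{PH}(H(x'), H(x)) < \varepsilon$ by Prp.~\ref{prp:PHdist}. A typical element of $H(x')$ has the form $g(x') y'$ with $y' \in F(x')$, and I want to approximate it by $g(x) z$ with $z \in F(x)$. The natural device is the triangle inequality
\[
  \| g(x') y' - g(x) z \| \le |g(x')| \, \| y' - z \| + |g(x') - g(x)| \, \| z \|,
\]
which separates the two sources of variation: the change in $F$, controlled by the continuity of $F$, and the change in $g$, controlled by the continuity of $g$.

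The key preliminary step is to secure uniform bounds on the two factors multiplying these small quantities. First, since $g$ is continuous at $x$, there is $\delta_1 > 0$ such that $|g(x') - g(x)| < 1$ on $B(x, \delta_1)$, so that $|g(x')| \le C := |g(x)| + 1$ there. Second, since $F(x)$ is bounded and nonempty, $R := \sup_{z \in F(x)} \| z \| < \infty$. These are exactly the coefficients appearing above, and this is the one place where the hypothesis that $F(x)$ is bounded is essential: without it the cross term $|g(x') - g(x)| \, \| z \|$ could not be made uniformly small.

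With $C$ and $R$ in hand, I shrink $\delta$ further. Using the continuity of $g$ again I arrange $|g(x') - g(x)| < \varepsilon / (2(R+1))$, and using the continuity of $F$ together with Prp.~\ref{prp:PHdist} I arrange $d_{PH}(F(x'), F(x)) < \varepsilon/(2C)$, so that there is $\eta < \varepsilon/(2C)$ for which every $y' \in F(x')$ admits some $z \in F(x)$ with $\| y' - z \| \le \eta$ (and symmetrically). Taking $\delta$ to be the minimum of the resulting radii and substituting into the triangle inequality yields
\[
  \| g(x') y' - g(x) z \| \le C \eta + \frac{\varepsilon}{2(R+1)} R < \frac{\varepsilon}{2} + \frac{\varepsilon}{2} = \varepsilon.
\]
The same estimate with the roles of $x$ and $x'$ interchanged handles the reverse inclusion, since the bounds $|g(x')| \le C$ and $\| z \| \le R$ still apply; thus both conditions of Prp.~\ref{prp:PHdist} hold with a common bound strictly below $\varepsilon$, giving $d_{PH}(H(x'), H(x)) < \varepsilon$.

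I do not anticipate a genuine obstacle here. The only point requiring care is the bookkeeping that keeps the coefficients $C$ and $R$ uniform over the neighbourhood \emph{before} the small quantities are actually made small, together with the harmless degenerate cases $R = 0$ or $\eta = 0$, which only make the estimate easier.
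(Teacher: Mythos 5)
Your proof is correct and follows essentially the same route as the paper's: the same triangle-inequality decomposition $\|g(x')y' - g(x)z\| \le |g(x')|\,\|y'-z\| + |g(x')-g(x)|\,\|z\|$, the same uniform bound on $|g|$ near $x$ from continuity, the same use of the boundedness of $F(x)$ to control the cross term, and the same appeal to Prp.~\ref{prp:PHdist}. The only differences are in the bookkeeping of constants, which are immaterial.
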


\begin{proof}
Fix any $x \in M$ and $\varepsilon > 0$. From the continuity of $g$ it follows that there exist $\theta > 0$ and 
$\delta_1 > 0$ such that $|g(x')| \le \theta$ for any $x' \in B(x, \delta_1)$. In turn, from the continuity of $F$ it
follows that there exists $\delta_2 > 0$ such that $d_{PH}(F(x'), F(x)) < \varepsilon / 2 \theta$ for any 
$x' \in B(x, \delta_2)$. Therefore by Proposition~\ref{prp:PHdist} there exists $\eta < \varepsilon$ such that for any 
$x' \in B(x, \delta_2)$ and $y' \in F(x')$ one can find $y \in F(x)$ for which $\| y' - y \| \le \eta / 2 \theta$. 
Finally, applying the continuity of $g$ again one gets that there exists $\delta_3 > 0$ such
that $|g(x') - g(x)| < \eta / 2C$ for any $x' \in B(x, \delta_3)$, where $C = \sup\{ \| y \| \mid y \in F(x) \}$.

Set $\delta = \min\{ \delta_1, \delta_2, \delta_3 \}$. Choose any $x' \in B(x, \delta)$ and $z' \in H(x')$. 
Then $z' = g(x') y'$ for some $y' \in F(x')$. As was noted above, one can find $y \in F(x)$ such that 
$\| y' - y \| \le \eta / 2 \theta$. Define $z = g(x) y$. Then $z \in H(x)$ and
\[
  \| z' - z \| = \| g(x') y' - g(x) y \| \le |g(x')| \| y' - y \| + |g(x') - g(x)| \| y \|
  \le \theta \frac{\eta}{2 \theta} + \frac{\eta}{2 C} C \le \eta.
\]
Thus, for any $z' \in H(x')$ we have found $z \in H(x)$ such that $\| z' - z \| \le \eta$. Arguing in a similar way one
can check that for any $z \in H(x)$ there exists $z' \in H(x')$ satisfying the same inequality. Hence by
Proposition~\ref{prp:PHdist} one has $d_{PH}(H(x'), H(x)) < \varepsilon$, which implies the required result.
\end{proof}

\begin{proposition} \label{prp:UnionContinuous} 
Let $T$ be a nonempty set and $F \colon M \times T \rightrightarrows X$ be a given set-valued map. Suppose that the maps
$F(\cdot, t)$ are continuous uniformly in $t$. Then set-valued map $G(\cdot) = \bigcup_{t \in T} F(\cdot, t)$ is
continuous. In particular, if $T$ is a compact topological space, then it is sufficient to assume that the set-valued
map $F$ is continuous.
\end{proposition}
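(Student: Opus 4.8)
The plan is to reduce everything to the definition of the Pompeiu-Hausdorff distance as the supremum of two one-sided distances, exploiting that every element of $G(x) = \bigcup_{t \in T} F(x, t)$ belongs to some member $F(x, t)$ of the family and that $\dist(\cdot, \cdot)$ is monotone with respect to set inclusion (a larger set yields a smaller distance).

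First I would fix $x \in M$ and $\varepsilon > 0$. By the assumed uniform-in-$t$ continuity of the maps $F(\cdot, t)$, there is $\delta > 0$, \emph{independent of} $t$, such that $d_{PH}(F(x', t), F(x, t)) < \varepsilon / 2$ for every $t \in T$ and every $x' \in B(x, \delta)$; in particular, for each fixed $t$ both one-sided suprema in the definition of $d_{PH}$ are strictly below $\varepsilon / 2$. Now take any $x' \in B(x, \delta)$. For an arbitrary $a \in G(x')$ pick $t$ with $a \in F(x', t)$; since $F(x, t) \subseteq G(x)$ we get $\dist(a, G(x)) \le \dist(a, F(x, t)) \le \sup_{a' \in F(x', t)} \dist(a', F(x, t)) < \varepsilon / 2$. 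Taking the supremum over $a \in G(x')$ yields $\sup_{a \in G(x')} \dist(a, G(x)) \le \varepsilon / 2$. By the symmetric argument (using $b \in F(x, t) \subseteq G(x)$ together with $F(x', t) \subseteq G(x')$ and the reverse one-sided distance) one obtains $\sup_{b \in G(x)} \dist(b, G(x')) \le \varepsilon / 2$, whence $d_{PH}(G(x'), G(x)) \le \varepsilon / 2 < \varepsilon$, proving continuity of $G$ at $x$.

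The only delicate point here is the passage from the pointwise-in-$a$ strict inequalities to a bound on the supremum: each $\dist(a, G(x))$ is individually strictly below $\varepsilon/2$, yet the supremum over all $a$ could a priori equal $\varepsilon/2$. This is precisely why I would run the uniform-continuity estimate at level $\varepsilon / 2$ rather than $\varepsilon$, so that the resulting supremum is bounded by $\varepsilon / 2$, which is safely $< \varepsilon$. (Equivalently one could invoke Prp.~\ref{prp:PHdist} to extract, for each $t$, a matching tolerance $\eta_t < \varepsilon$, but since $\eta_t$ need not be uniform in $t$, the direct supremum estimate is cleaner.)

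For the final assertion I would deduce the uniform-in-$t$ continuity from joint continuity of $F$ on $M \times T$ together with compactness of $T$, by a standard tube-lemma argument. Fixing $x$ and $\varepsilon$, continuity of $F$ at each $(x, t)$ provides $\delta_t > 0$ and an open neighbourhood $U_t \ni t$ with $d_{PH}(F(x', t'), F(x, t)) < \varepsilon / 2$ whenever $x' \in B(x, \delta_t)$ and $t' \in U_t$. The sets $\{ U_t \}_{t \in T}$ cover the compact space $T$, so finitely many $U_{t_1}, \dots, U_{t_n}$ suffice; set $\delta = \min_i \delta_{t_i}$. Given any $t \in T$ and $x' \in B(x, \delta)$, choose $i$ with $t \in U_{t_i}$; then both $d_{PH}(F(x', t), F(x, t_i)) < \varepsilon / 2$ (take $t' = t$ and this $x'$) and $d_{PH}(F(x, t), F(x, t_i)) < \varepsilon / 2$ (take $t' = t$ and $x' = x$), so the triangle inequality for the Pompeiu-Hausdorff distance gives $d_{PH}(F(x', t), F(x, t)) < \varepsilon$. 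Thus $\delta$ witnesses uniform-in-$t$ continuity, and the first part applies. I expect the main obstacle to be organizational rather than deep: keeping the strict inequalities honest across the supremum (handled by the $\varepsilon/2$ device) and, in the compact case, correctly combining joint continuity with the finite subcover via the triangle inequality for $d_{PH}$, which I would invoke as a standard property of the Hausdorff pseudometric.
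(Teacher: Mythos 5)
Your proof is correct and follows essentially the same route as the paper's: a uniform $\delta$ at tolerance $\varepsilon/2$, the observation that each element of the union lies in some $F(x',t)$ whose counterpart $F(x,t)$ is contained in $G(x)$, and a symmetric estimate for the other one-sided distance (you bound the suprema directly where the paper funnels the same idea through Prp.~\ref{prp:PHdist}, an immaterial difference). You also write out the compactness-plus-triangle-inequality reduction of joint continuity to uniform-in-$t$ continuity, which the paper merely asserts as a standard argument; your version of it is correct.
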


\begin{proof}
Fix any $x \in M$ and $\varepsilon > 0$. From the uniform in $t$ continuity of $F(\cdot, t)$ it follows that one can
find $\delta > 0$ such that $d_{PH}(F(x', t), F(x, t)) < \varepsilon/2$ for any $x' \in B(x, \delta)$ and $t \in T$.

Choose any $x' \in B(x, \delta)$ and $z' \in G(x')$. Then by definition there exists $t' \in T$ such that 
$y' \in F(x', t')$. By Proposition~\ref{prp:PHdist} one can find $y \in F(x, t') \subseteq G(x)$ such that 
$\| y' - y \| < \varepsilon/2$. Similarly, if $y \in G(x)$, then $y \in F(x, t)$ for some $t \in T$ and
Proposition~\ref{prp:PHdist} implies that there exists $y' \in F(x', t) \subseteq G(x')$ such that 
$\| y' - y \| < \varepsilon/2$. Hence applying Proposition~\ref{prp:PHdist} one more time one obtains that
$d_{PH}(G(x'), G(x)) < \varepsilon$, which implies that $G$ is continuous.

It remains to note that if $T$ is a compact topological space and the set-valued map $F$ is continuous, then, as one can
readily check using the standard compactness argument, the maps $F(\cdot, t)$ are continuous uniformly in $t$.
\end{proof}

\begin{proposition} \label{prp:ConvexHullContinuous}
Let $X$ be a normed space and $F \colon M \rightrightarrows X$ be a continuous set-valued map. Then the set-valued map
$\co F$ is continuous as well, where ``$\co$'' stands for the convex hull.
\end{proposition}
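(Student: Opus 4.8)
The plan is to reduce the whole statement to the single elementary estimate that passing to convex hulls does not increase the Pompeiu-Hausdorff distance, namely $d_{PH}(\co A, \co B) \le d_{PH}(A, B)$ for any nonempty sets $A, B \subseteq X$. Once this is established, the continuity of $\co F$ is an immediate consequence of the continuity of $F$, and no further structure of $M$ or $X$ is needed.

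First I would fix any $x \in M$ and $\varepsilon > 0$, use the continuity of $F$ to obtain $\delta > 0$ such that $d_{PH}(F(x'), F(x)) < \varepsilon$ for all $x' \in B(x, \delta)$, and then invoke Prp.~\ref{prp:PHdist} to extract some $\eta < \varepsilon$ for which the two approximation conditions hold between $F(x')$ and $F(x)$ with bound $\eta$. Note that $\co F(x)$ and $\co F(x')$ are nonempty whenever $F(x)$ and $F(x')$ are, so the Pompeiu-Hausdorff distance between them is well defined.

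The heart of the argument is the convex-combination estimate. Take any $z' \in \co F(x')$. By the very definition of the convex hull, $z' = \sum_{i=1}^n \lambda_i y_i'$ for some $n \in \mathbb{N}$, points $y_i' \in F(x')$, and coefficients $\lambda_i \ge 0$ with $\sum_{i=1}^n \lambda_i = 1$; crucially this is a \emph{finite} combination even when $X$ is infinite dimensional, so Carath\'{e}odory's theorem is not required. For each $i$ the first condition of Prp.~\ref{prp:PHdist} supplies $z_i \in F(x)$ with $\| y_i' - z_i \| \le \eta$. Setting $z = \sum_{i=1}^n \lambda_i z_i \in \co F(x)$ and using the triangle inequality together with convexity of the coefficients yields
\[
  \| z' - z \| = \Big\| \sum_{i=1}^n \lambda_i (y_i' - z_i) \Big\|
  \le \sum_{i=1}^n \lambda_i \| y_i' - z_i \| \le \eta .
\]
Swapping the roles of $x$ and $x'$ and invoking the second condition of Prp.~\ref{prp:PHdist} shows symmetrically that every point of $\co F(x)$ has a point of $\co F(x')$ within distance $\eta$.

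Finally I would apply Prp.~\ref{prp:PHdist} one more time (with the same $\eta < \varepsilon$) to conclude that $d_{PH}(\co F(x'), \co F(x)) < \varepsilon$ for every $x' \in B(x, \delta)$, which is exactly the continuity of $\co F$. I do not anticipate any genuine obstacle: the only point that deserves attention is that elements of a convex hull are finite convex combinations, so the estimate above involves finite sums only and the reasoning carries over verbatim to an arbitrary normed space.
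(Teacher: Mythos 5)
Your proof is correct and follows essentially the same route as the paper: write an element of $\co F(x')$ as a finite convex combination, approximate each constituent point using Prp.~\ref{prp:PHdist}, estimate via the triangle inequality, swap the roles of $x$ and $x'$, and apply Prp.~\ref{prp:PHdist} once more. Your framing of the key step as the general inequality $d_{PH}(\co A, \co B) \le d_{PH}(A, B)$ is a slightly cleaner packaging of the same computation, and your remark that only finite combinations are involved (so no Carath\'eodory-type result is needed in infinite dimensions) is a valid observation the paper leaves implicit.
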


\begin{proof}
Fix any $\varepsilon > 0$ and $x \in M$. Due to the continuity of $F$ there exists $\delta > 0$ such that 
$d_{PH}(F(x'), F(x)) < \varepsilon$ for any $x' \in B(x, \delta)$. Choose any $x' \in B(x, \delta)$ and 
$y' \in \co F(x')$. By the definition of convex hull there exist $n \in \mathbb{N}$, $y'_i \in F(x')$, and 
$\alpha_i \ge 0$, $i \in \{ 1, \ldots, n \}$, such that
\[
  y' = \sum_{i = 1}^n \alpha_i y'_i, \quad \sum_{i = 1}^n \alpha_i = 1.
\]
By Proposition~\ref{prp:PHdist} there exists $\eta < \varepsilon$ such that for any $i \in \{ 1, \ldots, n \}$ one can
find $y_i \in F(x)$ for which $\| y'_i - y_i \| \le \eta$. Define $y = \sum_{i = 1}^n \alpha_i y_i$. Then 
$y \in \co F(x)$ and $\| y' - y \| \le \eta$. Thus, for any $y' \in F(x')$ we found $y \in F(x)$ such that 
$\| y' - y \| \le \eta$. Repeating the same argument with $y'$ and $y$ swapped and then applying
Proposition~\ref{prp:PHdist} one obtains that $d_{PH}(\co F(x'), \co F(x)) < \varepsilon$, which implies that 
the set-valued map $\co F$ is continuous.
\end{proof}

\begin{proposition} \label{prp:ClosureContinuous}
Let $F \colon M \rightrightarrows X$ be a continuous set-valued map. Then the map $\cl F$ is continuous as well, where
$\cl$ is the closure operator.
\end{proposition}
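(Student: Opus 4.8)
The plan is to reduce the whole statement to a single closure-invariance identity: for any nonempty sets $A, B \subseteq X$ one has $d_{PH}(\cl A, \cl B) = d_{PH}(A, B)$. Once this is available, the proposition is immediate. Indeed, fix $x \in M$ and $\varepsilon > 0$; by the continuity of $F$ there is $\delta > 0$ with $d_{PH}(F(x'), F(x)) < \varepsilon$ for every $x' \in B(x, \delta)$, and then the identity gives $d_{PH}(\cl F(x'), \cl F(x)) = d_{PH}(F(x'), F(x)) < \varepsilon$ on the same ball $B(x, \delta)$, which is exactly the continuity of $\cl F$ at $x$. Since $x$ is arbitrary, $\cl F$ is continuous.

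To establish the identity I would use two elementary properties of the distance-to-a-set function in $(X, \rho)$. First, for every point $a \in X$ and every nonempty $B$ one has $\dist(a, B) = \dist(a, \cl B)$: the inequality ``$\le$'' comes from approximating any $c \in \cl B$ by points of $B$ (so that $\rho(a, b) \le \rho(a, c) + \varepsilon'$ for suitable $b \in B$), and ``$\ge$'' is trivial since $B \subseteq \cl B$. Second, the map $a \mapsto \dist(a, B)$ is $1$-Lipschitz with respect to $\rho$, hence continuous, so its supremum over a set $A$ equals its supremum over $\cl A$; here ``$\le$'' is trivial and the reverse inequality follows by taking, for each $a \in \cl A$, a sequence in $A$ converging to $a$ and passing to the limit.

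Combining the two facts yields $\sup_{a \in \cl A} \dist(a, \cl B) = \sup_{a \in \cl A} \dist(a, B) = \sup_{a \in A} \dist(a, B)$, where the first equality uses the first fact and the second uses the second; symmetrically $\sup_{b \in \cl B} \dist(b, \cl A) = \sup_{b \in B} \dist(b, A)$. Taking the larger of the two quantities and comparing with the definition of the Pompeiu-Hausdorff distance gives $d_{PH}(\cl A, \cl B) = d_{PH}(A, B)$, completing the argument.

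I do not anticipate a real obstacle here: the only delicate point is the interchange of supremum and closure, and this rests solely on the (Lipschitz) continuity of $\dist(\cdot, B)$. As an alternative staying closer to the style of the preceding proofs, one could argue directly via Prp.~\ref{prp:PHdist}, promoting the $\eta$-approximations between $F(x')$ and $F(x)$ to approximations between the closures by a limiting argument; however, the closure-invariance identity makes the proof both shorter and more transparent, so I would prefer it.
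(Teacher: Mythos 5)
Your proof is correct and follows essentially the same route as the paper: both reduce the proposition to the fact that passing to closures leaves the Pompeiu--Hausdorff distance unchanged. The only difference is that the paper obtains this invariance in one line from the triangle inequality for $d_{PH}$ together with $d_{PH}(A, \cl A) = 0$, whereas you verify the identity $d_{PH}(\cl A, \cl B) = d_{PH}(A, B)$ directly from the elementary properties of $\dist(\cdot, B)$; both verifications are valid.
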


\begin{proof}
The validity of this proposition follows from the triangle inequality for the Pompeiu-Hausdorff metric and the obvious
equality $d_{PH}(A, \cl A) = 0$ that holds true for any nonempty set $A \subset X$.
\end{proof}

Let $X$ be a real Banach space. The topological dual space of $X$ is denoted by $X^*$, and the canonical duality pairing
between $X$ and $X^*$ is denoted by $\langle \cdot, \cdot \rangle$, that is, $\langle x^*, x \rangle = x^*(x)$ for any
$x \in X$ and $x^* \in X^*$. The weak topology on $X$ is denoted by $\sigma(X, X^*)$, while the weak$^*$ topology on
$X^*$ is denoted by $\sigma(X^*, X)$. 

We endow the Cartesian product $X \times Y$ of normed spaces with the norm 
$\| (x, y) \| = \| x \|_X + \| y \|_Y$ for any $(x, y) \in X \times Y$. As is easily seen, the spaces
$(\mathbb{R} \times X)^*$ and $\mathbb{R} \times X^*$ are isomorphic and the corresponding isomorphism is continuous
both in the weak and weak${}^*$ topologies. In addition, the weak${}^*$ topology on $(\mathbb{R} \times X)^*$
corresponds to the the product topology $\tau_{\mathbb{R}} \times \sigma(X^*, X)$ on $\mathbb{R} \times X^*$, where
$\tau_{\mathbb{R}}$ is the standard topology on $\mathbb{R}$. For the sake of shortness we call this product topology
weak${}^*$. Let us also note that subsets of $\mathbb{R} \times X^*$ are weak${}^*$ compact if and only if they are norm
bounded and weak${}^*$ closed (see, e.g. \cite[Theorem~2.1]{Dolgopolik_CodiffCalc}). We denote the closure in the
weak${}^*$ topology by $\cl^*$.

\begin{proposition} \label{prp:WeakStarClosureContinuous}
Let $X$ be a reflexive Banach space, $Q \subseteq X$ be a nonempty set, and
$F \colon Q \rightrightarrows \mathbb{R} \times X^*$ be a set-valued map with bounded convex values. If $F$ is
continuous, then $\cl^* F$ is continuous as well.
\end{proposition}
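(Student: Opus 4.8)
The plan is to reduce the weak* closure to the norm closure and then invoke Prp.~\ref{prp:ClosureContinuous}. The key observation is that, because $X$ is reflexive, the weak* topology on $\mathbb{R} \times X^*$ coincides with its weak topology. Indeed, $\mathbb{R} \times X^*$ is the dual of $\mathbb{R} \times X$, and reflexivity of $X$ gives $X^{**} = X$, so that $\sigma(X^*, X) = \sigma(X^*, X^{**})$ is exactly the weak topology on $X^*$. Consequently the product topology $\tau_{\mathbb{R}} \times \sigma(X^*, X)$ that the excerpt calls weak* is precisely the weak topology $\sigma(Y, Y^*)$ on $Y := \mathbb{R} \times X^*$, since the product of the weak topologies on $\mathbb{R}$ and $X^*$ is the weak topology of the product.

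First I would recall Mazur's theorem: in a normed space a convex set is weakly closed if and only if it is norm closed, and hence the weak closure of a convex set equals its norm closure. Applying this in $Y = \mathbb{R} \times X^*$ and using the identification above, for every convex set $A \subseteq \mathbb{R} \times X^*$ one has $\cl^* A = \cl A$, where $\cl$ denotes the norm closure. Note that convexity alone suffices here; the boundedness hypothesis plays no role in this equality beyond guaranteeing, as recorded just before the statement, that the resulting values are weak* compact.

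Since by assumption the values $F(x)$ are convex for every $x \in Q$, this yields $\cl^* F(x) = \cl F(x)$ for all $x \in Q$; that is, the set-valued maps $\cl^* F$ and $\cl F$ coincide. The continuity in the Pompeiu-Hausdorff metric is measured with respect to the norm on $\mathbb{R} \times X^*$ in both cases, so it remains only to observe that $\cl F$ is continuous, which is exactly the content of Prp.~\ref{prp:ClosureContinuous} applied to the continuous map $F$. Hence $\cl^* F = \cl F$ is continuous, as required.

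I expect the only genuine subtlety — the step that must be justified carefully rather than merely asserted — to be the identification of the weak* topology with the weak topology through reflexivity, together with the correct invocation of Mazur's theorem for convex (as opposed to arbitrary) sets. Once that equality of topologies is in hand, the rest is a direct rewriting of $\cl^*$ as $\cl$ on convex values followed by an appeal to the already-proved Prp.~\ref{prp:ClosureContinuous}.
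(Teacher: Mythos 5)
Your argument is correct and follows essentially the same route as the paper: both use reflexivity to identify the weak${}^*$ and weak topologies on $\mathbb{R} \times X^*$ (the paper phrases this as reflexivity of $\mathbb{R} \times X$), then invoke the coincidence of weak and norm closures for convex sets to conclude $\cl^* F(x) = \cl F(x)$, and finish with Prp.~\ref{prp:ClosureContinuous}. Your explicit remarks on where convexity versus boundedness enter are a welcome clarification but do not change the substance.
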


\begin{proof}
One can readily check that the reflexivity of $X$ implies that the space $\mathbb{R} \times X$ is reflexive as well.
Hence taking into account the fact that for convex sets the closure in the strong topology coincides with the closure
in the weak topology one can conclude that for any convex subset $K \subset (\mathbb{R} \times X)^*$ its closure in the
strong topology $\cl K$ coincides with $\cl^* K$. Therefore $\cl F(x) = \cl^* F(x)$ for any $x \in Q$,
which by Proposition~\ref{prp:ClosureContinuous} implies that $\cl^* F$ is continuous
\end{proof}

\section{Hypodifferential calculus for nonsmooth convex functions}
\label{sect:HypodiffTheory}

In this section we present a detailed theory of \textit{hypodifferentials} of nonsmooth convex functions defined on 
a real Banach space. Our aim is to develop a calculus of such approximations and show that hypodifferentials possess
many useful properties in the general nonsmooth case (such as continuity) that subdifferentials do not have.

\subsection{Hypodifferentials of convex functions}
\label{subsect:HypodiffMain}

Let $X$ be a real Banach space and $f \colon X \to \mathbb{R} \cup \{ + \infty \}$ be a closed convex function. Denote
by $\dom f = \{ x \in X \mid f(x) \in \mathbb{R} \}$ the effective domain of $f$.

\begin{definition} \label{def:Hypodifferential}
The function $f$ is called \textit{hypodifferentiable} at a point $x \in \dom f$, if there exists a weak${}^*$ compact
set $\underline{d} f(x) \subset \mathbb{R} \times X^*$ such that
\begin{equation} \label{eq:HypodiffDef}
\begin{split}
  \lim_{\alpha \to +0} \frac{1}{\alpha} \Big| f(x + \alpha \Delta x) - f(x) 
  - \max_{(a, x^*) \in \underline{d} f(x)} (a + \alpha \langle x^*, \Delta x \rangle) \Big| &= 0,
  \\
  \max_{(a, x^*) \in \underline{d} f(x)} a &= 0
\end{split}
\end{equation}
for any $\Delta x \in X$. The set $\underline{d} f(x)$ is called a \textit{hypodifferential} of $f$ at $x$.
\end{definition}

\begin{remark}
It should be noted the maximum in $\max_{(a, x^*) \in \underline{d} f(x)} (a + \langle x^*, y \rangle)$ is attained 
for any $y \in X$, since the set $\underline{d} f(x) \subset \mathbb{R} \times X^*$ is weak${}^*$ compact.
\end{remark}

\begin{remark} \label{rmrk:HypodiffDef}
Let us point out how the definition of hypodifferential transforms under various assumptions on the space $X$:
\begin{enumerate}
\item{If $X$ is a reflexive Banach space, then a hypodifferential of $f$ at $x \in \dom f$ is a norm closed and bounded
subset $\underline{d} f(x) \subseteq \mathbb{R} \times X^*$ satisfying equalities~\eqref{eq:HypodiffDef}.
}

\item{If $X$ is a Hilbert space, then taking into account the Riesz representation theorem it is natural to
define a hypodifferential of $f$ as a weakly compact (or, equivalently, norm closed and bounded) set 
$\underline{d} f(x) \subset \mathbb{R} \times X$ such that for any $\Delta x \in X$ one has 
\begin{equation} \label{eq:HypodiffDefHilbertCase}
\begin{split}
  \lim_{\alpha \to +0} \frac{1}{\alpha} \Big| f(x + \alpha \Delta x) - f(x) 
  - \max_{(a, v) \in \underline{d} f(x)} (a + \langle v, x \rangle) \Big| = 0 
  \\
  \max_{(a, v) \in \underline{d} f(x)} a = 0,
\end{split}
\end{equation}
where $\langle \cdot, \cdot \rangle$ is the inner product in $H$.
}

\item{if $X = \mathbb{R}^d$, then a hypodifferential of $f$ at $x \in \dom f$ is a compact subset
$\underline{d} f(x) \subset \mathbb{R}^{d + 1}$ satisfying equalities \eqref{eq:HypodiffDefHilbertCase} with 
$\langle \cdot, \cdot \rangle$ being the inner product in $\mathbb{R}^d$.
}
\end{enumerate}
\end{remark}

Unlike derivative/gradient in the smooth case and subdifferential in the nonsmooth case, hypodifferential of a convex
function is not uniquely defined even in the smooth case. In particular, if $f$ is G\^{a}teaux differentiable at a point
$x$ and $f'(x)$ is its G\^{a}teaux derivative at this point, then both $\{ (0, f'(x)) \}$ and 
$\co\{ (0, f'(x))), (a, x^*) \}$ for any $a < 0$ and $x^* \in X^*$ are hydifferentials of $f$ at $x$.

It turns out that closed convex functions are hypodifferentiable in the interior $\interior \dom f$ of their effective
domain. Moreover, one can give a complete characterization of all hypodifferentials of $f$ at any given point. Denote by
$\core Q$ the algebraic interior (radial kernel) of a set $Q \subset X$, that is, the set of all $x \in Q$ such that for
any $y \in X$ one can find $t_y > 0$ such that $x + t y \in Q$ for any $t \in [0, t_y]$.

\begin{theorem} \label{thrm:HypodiffCharacterization}
The following statements hold true:
\begin{enumerate}
\item{$f$ is hypodifferentiable at every point $x \in \interior \dom f$;}

\item{if $f$ is hypodifferentiable at a point $x \in \dom f$, then $x \in \core \dom f$, $f$ is subdifferentiable at
this point, the subdifferential $\partial f(x)$ is weak${}^*$ compact, and the directional derivative $f'(x, \cdot)$ of
$f$ at $x$ is a proper closed convex function;}

\item{if $f$ is hypodifferentiable at a point $x \in \dom f$, then a weak${}^*$ compact set 
$H \subset \mathbb{R} \times X^*$ is a hypodifferential of $f$ at $x$ if and only if it satisfies the following three
conditions:
\begin{enumerate}
\item{$a \le 0$ for any $(a, x^*) \in H$;}

\item{there exists $x^* \in X^*$ such that $(0, x^*) \in H$;}

\item{$\{ x^* \in X^* \mid (0, x^*) \in H \} = \partial f(x)$.}
\end{enumerate}
}
\end{enumerate}
\end{theorem}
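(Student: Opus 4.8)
The linchpin is a single limit identity, which I would isolate first. Fix a weak$^*$ compact set $H\subset\mathbb{R}\times X^*$ with $\max_{(a,x^*)\in H}a=0$, write $S=\{x^*\mid(0,x^*)\in H\}$ for its zero-slice and $h_\alpha(\Delta x)=\max_{(a,x^*)\in H}(a+\alpha\langle x^*,\Delta x\rangle)$. I claim that for every $\Delta x\in X$
\[
  \lim_{\alpha\to+0}\frac{h_\alpha(\Delta x)}{\alpha}=\sigma_S(\Delta x):=\sup_{x^*\in S}\langle x^*,\Delta x\rangle .
\]
The inequality ``$\ge$'' is trivial, since each $x^*\in S$ gives $(0,x^*)\in H$ and hence $h_\alpha(\Delta x)\ge\alpha\langle x^*,\Delta x\rangle$. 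For ``$\le$'' I would first note that $\alpha\mapsto h_\alpha(\Delta x)$ is convex (a pointwise maximum of affine functions of $\alpha$) with $h_0(\Delta x)=0$, so the quotient is nondecreasing and the limit exists; then, choosing maximizers $(a_n,x_n^*)$ along a sequence $\alpha_n\to+0$, the norm-boundedness of $H$ forces $a_n\to0$, and a weak$^*$-convergent subnet yields a point $(0,x_0^*)\in H$, i.e. $x_0^*\in S$, with $\lim h_{\alpha_n}/\alpha_n\le\langle x_0^*,\Delta x\rangle\le\sigma_S(\Delta x)$.

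Dividing \eqref{eq:HypodiffDef} by $\alpha$ and comparing with $f'(x,\Delta x)=\lim_{\alpha\to+0}\alpha^{-1}(f(x+\alpha\Delta x)-f(x))$ (which exists in $[-\infty,+\infty]$ for convex $f$), this identity recasts the definition as follows: a weak$^*$ compact $H$ is a hypodifferential of $f$ at $x$ if and only if $\max_{(a,x^*)\in H}a=0$ and $f'(x,\Delta x)=\sigma_S(\Delta x)$ for all $\Delta x$, and in particular $f'(x,\cdot)$ is then finite in every direction. Statement (2) is read off immediately: $\sigma_S$ is the support function of a nonempty bounded set, hence finite, sublinear and Lipschitz, so $f'(x,\cdot)$ is a proper closed convex function; its finiteness in direction $\Delta x$ forces $x+t\Delta x\in\dom f$ for small $t>0$, whence $x\in\core\dom f$; and the standard description $\partial f(x)=\{x^*\mid\langle x^*,\cdot\rangle\le f'(x,\cdot)\}=\cl^*\co S$ shows $\partial f(x)$ is nonempty, weak$^*$ closed and norm-bounded (by the Lipschitz constant of $\sigma_S$), hence weak$^*$ compact by the criterion recorded before Prp.~\ref{prp:WeakStarClosureContinuous}.

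For statement (3) the reformulation does almost all the work, since here $f'(x,\cdot)=\sigma_{\partial f(x)}$ by (2). The normalization $\max a=0$ is precisely (a) and (b) together (all $a\le0$, the value $0$ being attained by weak$^*$ compactness), while the remaining requirement $\sigma_S=\sigma_{\partial f(x)}$ is, by recovery of a weak$^*$ closed convex set from its support function, equivalent to $\cl^*\co S=\partial f(x)$. Because a hypodifferential is convex, its zero-slice $S=H\cap(\{0\}\times X^*)$ is already weak$^*$ compact and convex, so $\cl^*\co S=S$ and this reads $S=\partial f(x)$, i.e. (c); conversely, (a)--(c) immediately return $\max a=0$ and $\sigma_S=\sigma_{\partial f(x)}=f'(x,\cdot)$, so the reformulation certifies $H$ as a hypodifferential.

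Statement (1) then follows by exhibiting a hypodifferential at an interior point: for $x\in\interior\dom f$ the function is locally Lipschitz, $\partial f(x)$ is nonempty weak$^*$ compact convex and $f'(x,\cdot)=\sigma_{\partial f(x)}$ is finite, so $H=\{0\}\times\partial f(x)$ has zero-slice $\partial f(x)$ and $\max a=0$, and the reformulation certifies it at once. I expect the only genuinely delicate point to be the ``$\le$'' half of the limit identity in a general Banach space: since the weak$^*$ topology need not be metrizable, the extraction of the limiting zero-slice point must proceed through subnets rather than subsequences, leaning on the weak$^*$ compactness of $H$ (equivalently, norm-boundedness together with weak$^*$ closedness). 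The remaining ingredients --- monotonicity of difference quotients of convex functions, the support-function description of $\partial f(x)$, and the local Lipschitzness and weak$^*$ compactness of $\partial f$ at interior points --- are classical, and I would simply cite them.
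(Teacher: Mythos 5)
Your overall architecture matches the paper's: statement (1) comes from the hypodifferential $\{0\}\times\partial f(x)$ at interior points, statement (2) from the finiteness and closedness of the support function of the zero-slice, and statement (3) from matching that support function against $f'(x,\cdot)=\sigma_{\partial f(x)}$. The genuine difference is in the central technical step. The paper introduces the envelope $\Phi(u)=\max_{(a,x^*)\in H}(a+\langle x^*,u\rangle)$, notes that it is a finite continuous convex function with $\Phi(0)=0$, and identifies its subdifferential at the origin with the zero-slice by citing the Ioffe--Tikhomirov theorem on the subdifferential of a supremum; comparing $f'(x,\cdot)$ with $\Phi'(0;\cdot)$ then finishes the argument. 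You instead prove the limit identity $\lim_{\alpha\to+0}\alpha^{-1}\max_{(a,x^*)\in H}(a+\alpha\langle x^*,\Delta x\rangle)=\sigma_S(\Delta x)$ directly, via monotonicity of the difference quotients of the convex function $\alpha\mapsto h_\alpha(\Delta x)$ and a weak${}^*$ subnet extraction from the maximizers. That argument is sound (the boundedness of $H$ and the finiteness of the limit do force $a_n\to 0$, and subnets are indeed the right tool since the weak${}^*$ topology need not be metrizable); it buys a self-contained proof of the key identity where the paper leans on an external supremum rule, at the cost of redoing by hand what the paper reuses elsewhere (e.g.\ in Thm.~\ref{thrm:MaxInfFamily}).

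One step does not close as written: in the forward direction of statement (3) you pass from $\cl^*\co S=\partial f(x)$ to $S=\partial f(x)$ by asserting that ``a hypodifferential is convex.'' Definition~\ref{def:Hypodifferential} only requires $\underline{d}f(x)$ to be weak${}^*$ compact, so the zero-slice $S$ of an arbitrary hypodifferential need not be convex and $\cl^*\co S$ can be strictly larger than $S$. Concretely, for $f(x)=|x|$ on $\mathbb{R}$ the set $H=\{(0,1),(0,-1)\}$ satisfies Definition~\ref{def:Hypodifferential} at $x=0$, yet its zero-slice is $\{-1,1\}\ne[-1,1]=\partial f(0)$, so condition (c) fails for it. What your argument actually establishes is that every hypodifferential satisfies (a), (b) and $\cl^*\co\{x^*\mid(0,x^*)\in H\}=\partial f(x)$, and that conversely (a)--(c) are sufficient. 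To obtain (c) exactly as stated one must either assume $H$ convex (so that the slice is automatically weak${}^*$ compact and convex and $\cl^*\co S=S$) or weaken (c) to the $\cl^*\co$ form. To be fair, the paper's own proof relies on the same implicit convexity when it equates $\partial\Phi(0)$ with the bare zero-slice, so this is an imprecision you have inherited rather than introduced; but it should be flagged explicitly rather than smuggled in as a property of hypodifferentials.
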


\begin{proof}
For the sake of convenience, we divide the proof of the theorem into three parts corresponding to the three statements
of the theorem.

\textbf{Part 1.} Fix any $x \in \interior \dom f$. Let us show that $f$ is hypodifferentiable at $x$. Indeed, by
\cite[Corollary~I.2.5]{EkelandTemam} the function $f$ is continuous in a neighbourhood of $x$, since $X$ is a Banach
space, $f$ is a closed convex function, and $x \in \interior \dom f$. Therefore $f$ is subdifferentiable at $x$ by
\cite[Proposition~I.5.2]{EkelandTemam} and $\partial f(x)$ is a weak${}^*$ compact set by
\cite[Theorem~2.4.9]{Zalinescu}. Moreover, by \cite[Theorem~2.4.9]{Zalinescu} the function $f$ is directionally
differentiable at $x$ and for its directional derivative at this point, $f'(x, \cdot)$, the following equalities hold
true:
\[
  f'(x, u) = \lim_{\alpha \to +0} \frac{f(x + \alpha u) - f(x)}{\alpha} 
  = \max_{x^* \in \partial f(x)} \langle x^*, u \rangle \quad \forall u \in X.
\]
Define $\underline{d} f(x) = \{ 0 \} \times \partial f(x)$. Then the set $\underline{d} f(x)$ is weak${}^*$ compact,
the first equality in \eqref{eq:HypodiffDef} is satisfied for any $\Delta x \in X$, and 
$\max_{(a, x^*) \in \underline{d} f(x)} a = 0$. Thus, the function $f$ is hypodifferentiable at $x$ and the set
$\underline{d} f(x)$ is its hypodifferential at this point.

\textbf{Part 2.} Suppose now that $f$ is hypodifferentiable at a point $x \in \dom f$ and $\underline{d} f(x)$ is its
hypodifferential at this point. Introduce the function
\[
  \Phi(u) = \max_{(a, x^*) \in \underline{d} f(x)} (a + \langle x^*, u \rangle) \quad \forall u \in X.
\]
Note that from the definition of hypodifferential it follows that $\Phi(0) = 0$ and $\dom \Phi = X$. Moreover, $\Phi$ is
a closed convex function as the supremum of a family of affine function. Therefore, $\Phi$ is continuous on $X$,
subdifferentiable at the origin, $\partial \Phi(0)$ is a weak${}^*$ compact set, $\Phi$ is directionally differentiable
at the origin, and
\[
  \Phi'(0; u) = \lim_{\alpha \to +0} \frac{\Phi(\alpha u)}{\alpha} 
  = \max_{x^* \in \partial \Phi(0)} \langle x^*, u \rangle \quad \forall u \in X.
\]
Hence taking into account the first equality in \eqref{eq:HypodiffDef} one gets that for any $u \in X$
\begin{equation} \label{eq:DirectDerivViaMaxAproxSubdiff}
  f'(x, u) := \lim_{\alpha \to +0} \frac{f(x + \alpha u) - f(x)}{\alpha} = 
  \lim_{\alpha \to +0} \frac{\Phi(\alpha u)}{\alpha}
  = \max_{x^* \in \partial \Phi(0)} \langle x^*, u \rangle.
\end{equation}
Consequently, $\dom f'(x, \cdot) = X$, which implies that $x \in \core \dom f$, and $f'(x, \cdot)$ is a proper closed
convex function. Moreover, by \cite[Theorem~2.4.4, part~(i) and Corollary~2.4.15]{Zalinescu} one has
\begin{equation} \label{eq:SubdiffOfMaxApproximation}
  \partial f(x) = \partial f'(x, \cdot)(0) = \partial \Phi(0).
\end{equation}
Thus, $f$ is subdifferentiable at $x$ and $\partial f(x) = \partial \Phi(0)$ is a weak${}^*$ compact set.

\textbf{Part 3.} Suppose finally that $f$ is hypodifferentiable at a point $x \in \dom f$ and $\underline{d} f(x)$ is
its hypodifferential at this point. Then by definition one has $\max\{ a \mid (a, x^*) \in \underline{d} f(x) \} = 0$
or, equivalently, $a \le 0$ for any $(a, x^*) \in \underline{d} f(x)$ and there exists $x^* \in X^*$ such that
$(0, x^*) \in \underline{d} f(x)$. 

Note that by the theorem on the subdifferential of an infinite family of convex functions
\cite[Theorem~4.2.3]{IoffeTihomirov} one has
\[
  \partial \Phi(0) = \Big\{ x^* \in X^* \Bigm| (0, x^*) \in \underline{d} f(x) \Big\}.
\]
Hence with the use of \eqref{eq:SubdiffOfMaxApproximation} one gets that 
$\{ x^* \in X^* \mid (0, x^*) \in \underline{d} f(x) \} = \partial f(x)$.

Let now $H \subset \mathbb{R} \times X^*$ be a weak${}^*$ compat set satisfying the assumptions of the theorem. Let us
show that $H$ is a hypodifferential of $f$ at $x$. Indeed, introduce the function 
\[
  \Phi_H(u) = \max_{(a, x^*) \in H} (a + \langle x^*, u \rangle) \quad \forall u \in X. 
\]
Then $\Phi_H(0) = 0$, $\dom \Phi_H = X$ due to the fact that $H$ is weak${}^*$ compact, and $\Phi_H$ is a closed convex
function as the maximum of a family of affine functions. Therefore, $\Phi_H$ is continuous on $X$,
subdifferentiable and directionally differentiable at the origin, and
\[
  \Phi_H'(0; u) = \lim_{\alpha \to +0} \frac{\Phi_H(\alpha u)}{\alpha} 
  = \max_{x^* \in \partial \Phi_H(0)} \langle x^*, u \rangle \quad \forall u \in X.
\]
Moreover, applying \cite[Theorem~4.2.3]{IoffeTihomirov} and the assumptions on the set $H$ from the formulation of 
the theorem one gets
\[
  \partial \Phi_H(0) = \Big\{ x^* \in X^* \Bigm| (0, x^*) \in H \Big\} = \partial f(x). 
\]
Consequently, taking into account equalities \eqref{eq:DirectDerivViaMaxAproxSubdiff} and
\eqref{eq:SubdiffOfMaxApproximation} from the second part of the proof one can conclude that
\[
  \lim_{\alpha \to +0} \frac{f(x + \alpha u) - f(x)}{\alpha} 
  = \lim_{\alpha \to + 0} \frac{\Phi_H(\alpha u)}{\alpha} \quad \forall u \in X,
\]
which obviously yeilds
\[
  \lim_{\alpha \to +0} \frac{1}{\alpha} \Big| f(x + \alpha u) - f(x) 
  - \max_{(a, x^*) \in H} (a + \langle x^*, u \rangle) \Big| = 0
\]
for any $u \in X$. Therefore, $H$ is a hypodifferential of $f$ at $x$.
\end{proof}

\begin{corollary} \label{crlr:HypodiffCharacterization}
For the function $f$ to be hypodifferentiable at a point $x \in \dom f$ it is necessary and sufficient that 
$x \in \core \dom f$.
\end{corollary}

\begin{proof}
The necessity of the condition $x \in \core \dom f$ for the hypodifferentiability of $f$ at $x \in \dom f$ follows
directly from the second statement of Theorem~\ref{thrm:HypodiffCharacterization}. The sufficiency of this condition
follows from the first statement of Theorem~\ref{thrm:HypodiffCharacterization} and the fact that 
$\core \dom f = \interior \dom f$ by \cite[Theorem~2.2.20]{Zalinescu}, since $f$ is l.s.c. and $X$ is a Banach space.
\end{proof}

\begin{corollary}
Let $f$ be hypodifferentiable at a point $x_* \in X$ and $\underline{d} f(x_*)$ be its hypodifferential at this point.
Then $f$ attains a global minimum at $x_*$ if and only if $0 \in \underline{d} f(x_*)$.
\end{corollary}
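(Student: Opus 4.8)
The plan is to reduce the statement to the classical Fermat rule for convex functions by invoking the third part of Theorem~\ref{thrm:HypodiffCharacterization}. Recall that a proper closed convex function $f$ attains its global minimum at $x_*$ if and only if $0 \in \partial f(x_*)$: this is immediate from the subgradient inequality, since $x^* = 0 \in \partial f(x_*)$ is by definition equivalent to $f(y) \ge f(x_*) + \langle 0, y - x_* \rangle = f(x_*)$ for all $y \in X$, which is exactly the condition that $x_*$ be a global minimizer. Note that the hypothesis that $f$ is hypodifferentiable at $x_*$ guarantees, via part~2 of Theorem~\ref{thrm:HypodiffCharacterization}, that $x_* \in \dom f$ and that $\partial f(x_*)$ is nonempty, so the Fermat rule is indeed applicable here.

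The key step is then to reinterpret membership in the subdifferential as membership in the hypodifferential. I read the symbol $0$ in the inclusion $0 \in \underline{d} f(x_*)$ as the zero element $(0, 0)$ of the product space $\mathbb{R} \times X^*$. With this convention, condition~(c) of part~3 of Theorem~\ref{thrm:HypodiffCharacterization}, namely $\{ x^* \in X^* \mid (0, x^*) \in \underline{d} f(x_*) \} = \partial f(x_*)$, yields directly that $(0, 0) \in \underline{d} f(x_*)$ if and only if $0$ lies in the set on the left-hand side, i.e. if and only if $0 \in \partial f(x_*)$.

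Chaining these two equivalences completes the argument: $f$ attains a global minimum at $x_*$ $\iff 0 \in \partial f(x_*) \iff (0,0) \in \underline{d} f(x_*)$. I do not expect any genuine obstacle in this proof, as it is essentially a translation of the well-known optimality condition through the structural characterization already established; the only point demanding care is the notational convention that $0 \in \underline{d} f(x_*)$ refers to the origin of $\mathbb{R} \times X^*$ and not merely to the vanishing of the first (scalar) coordinate. One could alternatively prove the corollary directly from Definition~\ref{def:Hypodifferential} using the formula $f'(x_*, u) = \max_{x^* \in \partial f(x_*)} \langle x^*, u \rangle$ for the directional derivative, but routing the argument through Theorem~\ref{thrm:HypodiffCharacterization} is shorter and avoids reproving facts already in hand.
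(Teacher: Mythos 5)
Your proof is correct and follows exactly the route the paper intends: the corollary is stated without proof as an immediate consequence of Theorem~\ref{thrm:HypodiffCharacterization}, part~3(c), combined with the Fermat rule $0 \in \partial f(x_*)$, and your reading of $0 \in \underline{d} f(x_*)$ as $(0,0) \in \mathbb{R} \times X^*$ is the right one.
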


Thus, in a sense, hypodifferential can be viewed as an extension of the subdifferential mapping into the space
$\mathbb{R} \times X^*$. Such extensions can be constructed in many different ways. As we will see below, there is
a natural way to construct such extensions that allows one to recover many standard  properties of the gradient of a
convex function in the nonsmooth case that subdifferentials do not have.

Since a hypodifferential is not uniquely defined, we restrict our consideration to a special class of hypodifferential
maps of nonsmooth convex functions that are especially convenient to work with. 

\begin{definition}
The function $f$ is called \textit{continuously} hypodifferentiable on a set $Q \subseteq \interior \dom f$, if $f$ is
hypodifferentiable at every point of this set and there exists a hypodifferential mapping 
$\underline{d} f \colon Q \rightrightarrows \mathbb{R} \times X^*$ that is continuous with respect to the
Pompeiu-Hausdorff distance. The function $f$ is said to be continuously hypodifferentiable near a point 
$x \in \interior \dom f$, if $f$ is continuously hypodifferentiable in a neighbourhood of $x$.
\end{definition}

\begin{definition}
Let $\underline{d} f \colon Q \rightrightarrows \mathbb{R} \times X^*$ be a hypodifferential mapping of $f$ on a set 
$Q \subseteq \dom f$. The hypodifferential mapping $\underline{d} f(\cdot)$ is called \textit{consistent} on the set
$Q$, if for any $x \in Q$ one has
\begin{equation} \label{eq:ConsistentHypodiff}
  f(y) - f(x) \ge a + \langle x^*, y - x \rangle \quad \forall y \in X, \: (a, x^*) \in \underline{d} f(x).
\end{equation}
\end{definition}

\begin{remark}
{(i)~Note that a hypodifferential map $\underline{d} f(\cdot)$ is consistent on a set $Q \subseteq \dom f$ if and 
only if
\[
  f(y) - f(x) \ge \max_{(a, x^*) \in \underline{d} f(x)} (a + \langle x^*, y - x \rangle) 
  \quad \forall y \in X, \: x \in Q.
\]
Roughly speaking, a consistent hypodifferential of $f$ is a weak${}^*$ compact family of affine minorants of $f$ that
defines a local approximation of $f$ at a given point in accordance with Definition~\ref{def:Hypodifferential}.
}

\noindent{(ii)~In the author's earlier paper on the method of hypodifferential descent
\cite{Dolgopolik_HypodiffDescent}, consistent hypodifferentials were called amenable. However, the term
\textit{consistent} seems to be more suitable.
}
\end{remark}

Let us give a simple example illustrating the definition of consistent and continuous hypodifferential mappings.

\begin{example} \label{ex:AbsoluteValue}
Let $X = \mathbb{R}$ and $f(x) = |x|$. Then for any $x, \Delta x \in \mathbb{R}$ one has
\[
  f(x + \Delta x) - f(x) = \max\{ x + \Delta x, - x - \Delta x \} - |x| 
  = \max\{ x - |x| + \Delta x, - x - |x| - \Delta x \},
\]
which implies that the set-valued map $\underline{d} f(x) := \co\{ (x - |x|, 1), (- x - |x|, - 1) \}$ is a
hypodifferential mapping of $f$ on $\mathbb{R}$.

Observe that for any $y, x \in \mathbb{R}$ one has
\begin{equation} \label{eq:AbsValueConsistency}
  |y| - |x| \ge y - |x| = (x - |x|) + (y - x) = a + v(y - x)
\end{equation}
for $(a, v) = (x - |x|, 1) \in \underline{d} f(x)$. Similarly, for any $y, x \in \mathbb{R}$ one has
\[
  |y| - |x| \ge - y - |x| = (- x - |x|) + (-1)(y - x) = a + v(y - x)
\]
for $(a, v) = (- x - |x|, -1) \in \underline{d} f(x)$. Summing up inequality \eqref{eq:AbsValueConsistency}
multiplied by $\alpha \in (0, 1)$ and the inequality above multiplied by $(1 - \alpha)$ one obtains that inequality
\eqref{eq:ConsistentHypodiff} holds true for all $x \in \mathbb{R}$. Thus, $\underline{d} f(\cdot)$ is a consistent
hypodifferential mapping on $\mathbb{R}$. Let us check that it is continuous.

Indeed, fix any $x, y \in \mathbb{R}$. By the definition of convex hull for any $(a, v) \in \underline{d} f(x)$ one has 
$(a, v) = ((2 \alpha - 1) x - |x|, 2 \alpha - 1)$ for some $\alpha \in [0, 1]$. Similarly, for any 
$(b, w) \in \underline{d} f(y)$ one has $(b, w) = ((2 \beta - 1) y - |y|, 2 \beta - 1)$. Therefore,
\begin{multline*}
  \max_{(a, v) \in \underline{d} f(x)} \dist((a, v), \underline{d} f(y)) 
  \\
  = \max_{\alpha \in [0, 1]} \min_{\beta \in [0, 1]} 
  \Big( \big| (2 \alpha - 1) x - |x| - ((2 \beta - 1)y - |y|) \big| + \big| (2 \alpha - 1) - (2 \beta - 1) \big| \Big).
\end{multline*}
Setting $\beta = \alpha$ one gets that
\begin{align*}
  \max_{(a, v) \in \underline{d} f(x)} \dist((a, v), \underline{d} f(y)) 
  &\le \max_{\alpha \in [0, 1]} \Big| (2 \alpha - 1) x - |x| - ((2 \alpha - 1)y - |y|) \Big| 
  \\
  &\le \max_{\alpha \in [0, 1]} \big( |2 \alpha - 1| + 1 \big) |x - y| = 2 |x - y|.
\end{align*}
Swapping $x$ and $y$ one can conclude that $d_{PH}(\underline{d} f(x), \underline{d} f(y)) \le 2 |x - y|$, that is, 
the hypodifferential mapping $\underline{d} f$ is continuous (furthermore, globally \textit{Lipschitz} continuous).
\end{example}

Comparing inequality \eqref{eq:ConsistentHypodiff} with the inequality from the definition of
$\varepsilon$-subdiffe\-ren\-ti\-al one obtains the following result.

\begin{proposition}
Let a hypodifferential map $\underline{d} f(\cdot)$ of the function $f$ be consistent on a set $Q \subseteq H$. Then
for any $x \in Q$ and $(a, x^*) \in \underline{d} f(x)$ one has $x^* \in \partial_{\varepsilon} f(x)$ 
for any $\varepsilon \ge - a$.
\end{proposition}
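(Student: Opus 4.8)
The plan is to reduce the statement to a direct comparison of two defining inequalities, exactly as the sentence preceding the proposition already suggests. Recall that, in the sense of convex analysis, the $\varepsilon$-subdifferential of $f$ at a point $x$ consists of those $x^* \in X^*$ for which
\[
  f(y) - f(x) \ge \langle x^*, y - x \rangle - \varepsilon \quad \forall y \in X.
\]
My aim is to verify that this inequality is satisfied by every $x^*$ appearing as the second coordinate of a pair $(a, x^*) \in \underline{d} f(x)$, provided $\varepsilon \ge - a$.

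First I would fix $x \in Q$ and an arbitrary $(a, x^*) \in \underline{d} f(x)$, and then invoke the consistency condition \eqref{eq:ConsistentHypodiff}, which yields
\[
  f(y) - f(x) \ge a + \langle x^*, y - x \rangle \quad \forall y \in X.
\]
Next, for any $\varepsilon \ge - a$ one has $a \ge - \varepsilon$, so the right-hand side above is bounded below by $\langle x^*, y - x \rangle - \varepsilon$. Chaining the two estimates gives
\[
  f(y) - f(x) \ge \langle x^*, y - x \rangle - \varepsilon \quad \forall y \in X,
\]
which is precisely the membership condition $x^* \in \partial_{\varepsilon} f(x)$.

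There is essentially no obstacle here: the result is an immediate consequence of the definitions, and the only ingredient beyond the consistency inequality is the elementary equivalence between $\varepsilon \ge - a$ and $a \ge - \varepsilon$. The single point that deserves a modicum of care is to record the $\varepsilon$-subdifferential in its correct \emph{global} form and to observe that the consistency inequality \eqref{eq:ConsistentHypodiff} holds for all $y \in X$ (and not merely for $y \in Q$), which is exactly what that definition guarantees. I would therefore keep the argument to the two displayed inequalities above and conclude directly.
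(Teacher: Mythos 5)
Your argument is correct and is exactly the comparison the paper has in mind: the paper states this proposition without a written proof, noting only that it follows by comparing the consistency inequality \eqref{eq:ConsistentHypodiff} with the definition of the $\varepsilon$-subdifferential, which is precisely the two-line chaining you carry out. Nothing further is needed.
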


\begin{remark} \label{rmrk:GlobalHypodifferential}
From Theorem~\ref{thrm:HypodiffCharacterization} it follows that in the general case the inclusion
\[
  \Big\{ x^* \in X^* \Bigm| (a, x^*) \in \underline{d} f(x), \: -a \le \varepsilon \Big\} 
  \subseteq \partial_{\varepsilon} f(x)
\]
can be strict. This inclusion turns into an equality in the case when $\underline{d} f(\cdot)$ is a so-called
\textit{global} hypodifferential map of the function $f$ (see \cite{Dolgopolik_GlobOptCond}), that is, 
\[
  f(y) - f(x) = \max_{(a, v) \in \underline{d} f(x)} (a + \langle x^*, y - x \rangle) 
  \quad \forall y, x \in X.
\]
Below we will provide some nontrivial examples of nonsmooth convex functions whose naturally defined hypodifferential
mappings are global. Here we only note that the hypodifferential of the function $f(x) = |x|$ from
Example~\ref{ex:AbsoluteValue} is global.
\end{remark}

Let us point out a useful property of convex functions with bounded hypodifferential mapping that can be proved directly
with the use of standard results on convex functions \cite[Theorem~2.4.13]{Zalinescu} (see also
\cite[Theorem~24.7]{Rockafellar}) and Theorem~\ref{thrm:HypodiffCharacterization}, but is also a particular case of
\cite[Corollary~2]{Dolgopolik_CodiffDescent}.

\begin{proposition} \label{prp:BoundedHypodiff_LipContin}
Let $\underline{d} f$ be a hypodifferential map of the function $f$ on a convex set $Q \subseteq \dom f$. Suppose that
$L = \sup\{ \| x^* \| \mid (0, x^*) \in \underline{d} f(x), x \in Q \} < + \infty$. Then $f$ is Lipschitz continuous 
on $Q$ with constant $L$.
\end{proposition}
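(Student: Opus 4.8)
The plan is to reduce the statement to the uniform boundedness of the (convex-analysis) subdifferential and then invoke the elementary subgradient inequality, so no limiting or integration argument is needed. First I would apply part~3(c) of Theorem~\ref{thrm:HypodiffCharacterization}, which identifies $\{ x^* \in X^* \mid (0, x^*) \in \underline{d} f(x) \}$ with $\partial f(x)$ for every $x \in Q$. This rewrites the hypothesis $L = \sup\{ \| x^* \| \mid (0, x^*) \in \underline{d} f(x),\ x \in Q \} < +\infty$ as the statement that $\| x^* \| \le L$ for every $x^* \in \partial f(x)$ and every $x \in Q$; that is, the subdifferential of $f$ is uniformly bounded by $L$ on $Q$. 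At the same time I would record that, since $f$ is hypodifferentiable at each $x \in Q$, part~2 of the same theorem guarantees that $\partial f(x) \neq \emptyset$, so a subgradient is available at every point of $Q$.

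Next I would fix arbitrary $x, y \in Q$ (both in $\dom f$, so $f(x), f(y) \in \mathbb{R}$) and pick subgradients $x^* \in \partial f(x)$ and $y^* \in \partial f(y)$, which exist by the previous step. The defining subgradient inequality gives $f(y) - f(x) \ge \langle x^*, y - x \rangle \ge - \| x^* \| \, \| y - x \| \ge - L \| y - x \|$, whence $f(y) - f(x) \ge - L \| y - x \|$. Symmetrically, $f(x) - f(y) \ge \langle y^*, x - y \rangle \ge - L \| y - x \|$ yields $f(y) - f(x) \le L \| y - x \|$. Combining the two bounds produces $|f(y) - f(x)| \le L \| y - x \|$, which is precisely Lipschitz continuity of $f$ on $Q$ with constant $L$.

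There is essentially no analytic obstacle here: the only point requiring care is the nonemptiness of $\partial f(x)$ and its correct identification with the relevant slice of $\underline{d} f(x)$, and both are delivered directly by Theorem~\ref{thrm:HypodiffCharacterization}. It is worth noting that convexity of $Q$ is not actually exploited by this route, since the subgradient inequalities hold against \emph{all} points of $X$ rather than only points of $Q$; the convexity assumption is natural but, for this particular argument, inessential.
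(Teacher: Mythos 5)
Your proof is correct and follows essentially the same route the paper indicates: the paper's (sketched) argument is to use Theorem~\ref{thrm:HypodiffCharacterization} to identify $\{x^* \mid (0,x^*) \in \underline{d}f(x)\}$ with $\partial f(x)$ and then invoke the standard fact that a convex function with nonempty subdifferential uniformly bounded by $L$ is $L$-Lipschitz, which is exactly the two-sided subgradient inequality you write out explicitly. Your side remark that convexity of $Q$ is not needed for this direction is also accurate, since the subgradient inequality holds against all of $X$.
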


\subsection{Hypodifferential calculus}
\label{subsect:CalculusRules}

In this section we present main calculus rules for consistent continuous hypodifferential maps of nonsmooth convex
functions. Recall that $f \colon X \to \mathbb{R} \cup \{ + \infty \}$ is a closed convex function.

\begin{proposition}
Let $f$ be continuously G\^{a}teaux differentiable on a set $Q \subseteq \dom f$. Then the map 
$\underline{d} f(\cdot) = \{ (0, f'(\cdot)) \}$ is a consistent continuous hypodifferential mapping of $f$ on $Q$.
\end{proposition}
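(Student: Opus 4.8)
The plan is to verify the three defining properties in turn: that $\underline{d} f(x) = \{ (0, f'(x)) \}$ is a genuine hypodifferential at each $x \in Q$, that the resulting map is consistent, and that it is continuous in the Pompeiu-Hausdorff metric. Each reduces to a standard fact about convex G\^{a}teaux differentiable functions, so I expect the proof to be short.

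For hypodifferentiability, fix $x \in Q$. The singleton $\{ (0, f'(x)) \}$ is trivially weak$^*$ compact, and the condition $\max_{(a, x^*) \in \underline{d} f(x)} a = 0$ holds immediately since the only element has first coordinate $0$. Because the set has a single element, the inner maximum in the first equation of \eqref{eq:HypodiffDef} collapses to $\alpha \langle f'(x), \Delta x \rangle$, so that equation becomes
\[
  \lim_{\alpha \to +0} \frac{1}{\alpha} \bigl| f(x + \alpha \Delta x) - f(x) - \alpha \langle f'(x), \Delta x \rangle \bigr| = 0,
\]
which is exactly the definition of the G\^{a}teaux derivative of $f$ at $x$ acting on the direction $\Delta x$. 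Hence $f$ is hypodifferentiable at $x$ with hypodifferential $\{ (0, f'(x)) \}$.

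For consistency I would appeal to convexity directly. By \eqref{eq:ConsistentHypodiff} it suffices to show $f(y) - f(x) \ge \langle f'(x), y - x \rangle$ for all $y \in X$. Convexity gives $f(x + \alpha(y - x)) - f(x) \le \alpha (f(y) - f(x))$ for $\alpha \in (0, 1)$; dividing by $\alpha$ and letting $\alpha \to +0$ yields $\langle f'(x), y - x \rangle \le f(y) - f(x)$, i.e.\ $f'(x) \in \partial f(x)$, which is precisely the required inequality. For continuity, the key observation is that the Pompeiu-Hausdorff distance between two singletons equals the norm distance between their points, $d_{PH}(\{ (0, f'(x)) \}, \{ (0, f'(x')) \}) = \| f'(x) - f'(x') \|_{X^*}$; since $f$ is \emph{continuously} G\^{a}teaux differentiable on $Q$, the map $x \mapsto f'(x)$ is norm-continuous into $X^*$, so this distance tends to $0$ as $x' \to x$.

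I do not anticipate any genuine obstacle. The only point requiring minor care is the meaning of ``continuously G\^{a}teaux differentiable'': the continuity argument uses norm-continuity of the map $x \mapsto f'(x)$, which is the standard interpretation and precisely what matches the norm-based Pompeiu-Hausdorff metric on $\mathbb{R} \times X^*$.
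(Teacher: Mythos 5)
Your proof is correct, and since the paper states this proposition without any proof (treating it as an immediate consequence of the definitions), your argument is exactly the intended one: the singleton collapses the max-type approximation to the Gâteaux differentiability condition, consistency is the standard gradient inequality $f'(x) \in \partial f(x)$ for convex functions, and continuity follows because the Pompeiu--Hausdorff distance between singletons is the norm distance. Your closing remark about interpreting ``continuously G\^{a}teaux differentiable'' as norm-continuity of $x \mapsto f'(x)$ in $X^*$ is the right reading and matches the norm used on $\mathbb{R} \times X^*$ throughout the paper.
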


\begin{theorem}[Maximum over a compact set] \label{thrm:MaxInfFamily}
Let $T$ be a compact topological space and a function $f \colon X \times T \to \mathbb{R} \cup \{ + \infty \}$ be given.
Suppose that the following assumptions hold true:
\begin{enumerate}
\item{for any $x \in X$ the map $t \mapsto f(x, t)$ is continuous;}

\item{for any $t \in T$ the map $f_t(\cdot) = f(\cdot ,t)$ is a closed convex function;}

\item{the set $\interior \bigcap_{t \in T} \dom f_t$ is nonempty and $Q \subseteq \interior \bigcap_{t \in T} \dom f_t$
is a given nonempty set;} 

\item{for any $t \in T$ there exists a consistent hypodifferential mapping $\underline{d} f_t$ of the function $f_t$ 
on $Q$;}

\item{the set-valued map $(x, t) \mapsto \underline{d} f_t(x)$ is continuous on $Q \times T$.}
\end{enumerate}
Then the function $f(\cdot) = \sup_{t \in T} f(\cdot, t)$ is hypodifferentiable on $Q$ and the set-valued map
\begin{equation} \label{eq:InfMaxHypodiff}
  \underline{d} f(\cdot) 
  = \cl^* \co\Big\{ (f(\cdot, t) - f(\cdot), 0) + \underline{d} f_t(\cdot) \Bigm| t \in T \Big\}
\end{equation}
is a consistent hypodifferential of $f$ on $Q$. If, in addition, either $X$ is reflexive or the set $\cl D(x)$, where
\begin{equation} \label{eq:InfMaxPreHypodiff}
  D(x) := \co\{ (f(x, t) - f(x), 0) + \underline{d} f_t(x) \mid t \in T \},
\end{equation}
is weak${}^*$ closed for all $x \in Q$, then the function $f$ is continuously hypodifferentiable on $Q$ and 
the hypodifferential map \eqref{eq:InfMaxHypodiff} is continuous on $Q$.
\end{theorem}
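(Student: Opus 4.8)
The plan is to obtain the first assertion by combining the characterization Theorem~\ref{thrm:HypodiffCharacterization} with a direct consistency check, and then to derive continuity from the Pompeiu--Hausdorff calculus of Section~\ref{sect:Preliminaries}. First I would record that $f = \sup_{t} f_t$ is a proper closed convex function (a supremum of closed convex functions, finite on the nonempty open set $\interior \bigcap_{t} \dom f_t$). Since $t \mapsto f(x,t)$ is continuous on the compact space $T$ and real-valued for $x$ in that set, $f(x)$ is finite there and the supremum is attained; hence $Q \subseteq \interior \dom f$, and the first part of Theorem~\ref{thrm:HypodiffCharacterization} immediately gives that $f$ is hypodifferentiable on $Q$.

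Next I would verify consistency directly. A generic element of the generating set $(f(x,t)-f(x),0)+\underline{d} f_t(x)$ has the form $(f(x,t)-f(x)+a, z^*)$ with $(a,z^*)\in\underline{d} f_t(x)$; using consistency of $\underline{d} f_t$, namely $a+\langle z^*, y-x\rangle \le f_t(y)-f_t(x)$, together with $f(x,t)=f_t(x)$ and $f_t(y)\le f(y)$, yields the minorant inequality $f(y)-f(x)\ge (f(x,t)-f(x)+a)+\langle z^*, y-x\rangle$. As this inequality is affine in $(a,x^*)$ and weak${}^*$ closed, it persists under $\co$ and $\cl^*$, so $\cl^* D(\cdot)$ is consistent. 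To see that $\cl^* D(x)$ is a \emph{hypodifferential}, I would apply the third part of Theorem~\ref{thrm:HypodiffCharacterization} (legitimate, since $f$ is already known to be hypodifferentiable): weak${}^*$ compactness follows from norm boundedness of $\underline{d} f_t(x)$ and of $f(x,t)-f(x)$ over the compact $T$ plus weak${}^*$ closedness; condition~(a) holds because $f(x,t)\le f(x)$ and $a\le 0$; and condition~(b) holds by choosing a maximizer $t_0$ in the active set $T(x):=\{t\mid f(x,t)=f(x)\}$ together with an element $(0,z^*)\in\underline{d} f_{t_0}(x)$.

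The crux is condition~(c), that $\{x^*\mid (0,x^*)\in\cl^* D(x)\}=\partial f(x)$, and I expect this identification to be the main obstacle. The inclusion ``$\subseteq$'' is immediate from consistency. For ``$\supseteq$'' I would argue through support functions: setting $\Phi_H(u)=\max_{(a,x^*)\in\cl^* D(x)}(a+\langle x^*,u\rangle)$, the theorem on subdifferentials of an infinite family \cite[Thm.~4.2.3]{IoffeTihomirov}, exactly as used in the proof of Theorem~\ref{thrm:HypodiffCharacterization}, gives $\{x^*\mid (0,x^*)\in\cl^* D(x)\}=\partial\Phi_H(0)$. Since the maximum of the weak${}^*$ continuous linear functional over $\cl^*\co$ of a set equals its maximum over the set, one has $\Phi_H=\sup_{t}\big[(f(x,t)-f(x))+\Phi_{H_t}\big]$, where $\Phi_{H_t}(u)=\max_{(a,z^*)\in\underline{d} f_t(x)}(a+\langle z^*,u\rangle)$; applying the sup-rule to this representation (the indices active at $0$ are precisely $T(x)$, and $\partial\Phi_{H_t}(0)=\partial f_t(x)$ by the same cited theorem) yields $\partial\Phi_H(0)=\cl^*\co\bigcup_{t\in T(x)}\partial f_t(x)$. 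On the other hand, the classical max-rule for $f=\sup_{t} f_t$, valid by compactness of $T$ and the continuity assumptions, gives $\partial f(x)=\cl^*\co\bigcup_{t\in T(x)}\partial f_t(x)$. Matching the two expressions establishes~(c) and completes the first part. The delicate points here are the correct interplay between the active set and the weak${}^*$ closure, and the admissibility of the sup-subdifferential calculus for both $f$ and the auxiliary function $\Phi_H$.

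For continuity I would run $\cl^* D(\cdot)$ through the calculus of Section~\ref{sect:Preliminaries}. Writing $F(x,t)=(f(x,t)-f(x),0)+\underline{d} f_t(x)$, I first need $(x,t)\mapsto f(x,t)-f(x)$ to be continuous on $Q\times T$; this follows from continuity of $f$ on $Q$ together with local equi-Lipschitz continuity of the family $\{f_t\}$, which comes from the uniform (in $t$, by compactness of $T$) boundedness of $\partial f_t(x)=\{z^*\mid(0,z^*)\in\underline{d} f_t(x)\}$ via Prp.~\ref{prp:BoundedHypodiff_LipContin}, combined with the given continuity of $t\mapsto f(x,t)$. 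Then $F$ is continuous on $Q\times T$ as a Minkowski sum (Prp.~\ref{prp:SumContinuous}), so $G(\cdot)=\bigcup_{t} F(\cdot,t)$ is continuous by Prp.~\ref{prp:UnionContinuous} (here $T$ compact), $D(\cdot)=\co G(\cdot)$ is continuous by Prp.~\ref{prp:ConvexHullContinuous}, and finally $\cl^* D(\cdot)$ is continuous: in the reflexive case by Prp.~\ref{prp:WeakStarClosureContinuous} (the values $D(x)$ are bounded convex), and under the weak${}^*$-closedness hypothesis because then $\cl^* D(x)=\cl D(x)$, so Prp.~\ref{prp:ClosureContinuous} applies. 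This gives continuity of the hypodifferential map and finishes the proof.
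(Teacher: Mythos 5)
Your proposal is correct and follows essentially the same route as the paper's proof: hypodifferentiability via the first part of Theorem~\ref{thrm:HypodiffCharacterization}, verification of conditions (a)--(c) of its third part with the identification $\{x^*\mid(0,x^*)\in\cl^*D(x)\}=\partial f(x)$ obtained by applying the Ioffe--Tikhomirov sup-rule both to $f=\sup_t f_t$ and to the auxiliary support function $\Phi_H$, a direct consistency check that passes through $\co$ and $\cl^*$, and the same chain of Propositions~\ref{prp:SumContinuous}, \ref{prp:UnionContinuous}, \ref{prp:ConvexHullContinuous}, \ref{prp:ClosureContinuous}, \ref{prp:WeakStarClosureContinuous} for continuity. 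If anything, you make explicit a detail the paper leaves implicit, namely the joint continuity of $(x,t)\mapsto f(x,t)-f(x)$ needed to invoke Proposition~\ref{prp:UnionContinuous}.
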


\begin{proof}
For the sake of convenience, we divide the proof of the theorem into several parts.

\textbf{1. Hypodifferentiability.} By our assumption $Q \subseteq \interior \bigcap_{t \in T} \dom f_t$, which implies
that $Q \subseteq \interior \dom f$ due to the fact that the map $t \mapsto f(x, t)$ is continuous for any $x \in X$.
Hence by Theorem~\ref{thrm:HypodiffCharacterization} the function $f$ is hypodifferentiable on the set $Q$. 

\textbf{2. Equality for hypodifferential.} Fix any $x \in Q$. Let us prove that the set $\underline{d} f(x)$, defined
according to equality \eqref{eq:InfMaxHypodiff}, is a hypodifferential of $f$ at $x$. To this end, we will utilize
the characterization of hypodifferentials given in Theorem~\ref{thrm:HypodiffCharacterization}.

Let us first show that the set $D(x)$ (see \eqref{eq:InfMaxPreHypodiff}) is norm bounded. Then one can conclude that
the set $\underline{d} f(x) = \cl^* D(x)$ is weak${}^*$ compact.

For any $t \in T$ the hypodifferential $\underline{d} f_t(x)$ is bounded due to the fact that it is weak${}^*$ compact.
By the last assumption of the theorem the map $t \mapsto \underline{d} f_t(x)$ is continuous in the Pompeiu-Hausdorff
distance. Therefore, for any $t \in T$ there exists a neighbourhood of $\mathcal{U}_t$ of $t$ such that 
$d_{PH}(\underline{d} f_t(x), \underline{d} f_s(x)) \le 1$ for any $s \in \mathcal{U}_t$. Hence the set 
$\bigcup_{s \in \mathcal{U}_t} \underline{d} f_s(x)$ is bounded, which due to the compactness of the space $T$ implies
that the set $\bigcup_{t \in T} \underline{d} f_t(x)$ is bounded as well. In addition, the set 
$\{ (f(x, t) - f(x), 0) \mid t \in T \}$ is bounded due to the compactness of $T$ and continuity of the map 
$t \mapsto f(x, t)$. Consequently, the sets
\[
  \Big\{ (f(x, t) - f(x), 0) + \underline{d} f_t(x) \Bigm| t \in T \Big\}
\]
and $D(x)$ (see \eqref{eq:InfMaxPreHypodiff}) are bounded. Hence $\underline{d} f(x)$ is a weak${}^*$ compact set.

Since $f(x) = \sup_{t \in T} f(x, t)$, for any $(a, x^*) \in D(x)$ one obviously has $a \le 0$, and, furthermore,
\[
  \sup_{(a, x^*) \in D(x)} a \le \sup_{t \in T} (f(x, t) - f(x)) = \sup_{t \in T} f(x, t) - f(x) = 0.
\]
Consequently, $\max\{ a \mid (a, x^*) \in \underline{d} f(x) \} = 0$.

Thus, if we show that
\begin{equation} \label{eq:InfMaxSubdiffHypodiff}
  \partial f(x) = \Big\{ x^* \in X^* \Bigm| (0, x^*) \in \underline{d} f(x) \Big\},
\end{equation}
then by Theorem~\ref{thrm:HypodiffCharacterization} one can conclude that $\underline{d} f(x)$ is a hypodifferential 
of $f$ at $x$.

Indeed, by the theorem on the subdifferential of the supremum of an infinite family of convex functions
\cite[Theorem~4.2.3]{IoffeTihomirov} and Theorem~\ref{thrm:HypodiffCharacterization} one has
\begin{equation} \label{eq:InfMaxSubdiff}
  \partial f(x) = \cl^* \bigcup_{t \in T(x)} \partial f_t(x)
  = \cl^* \Big\{ x^* \in X^* \Bigm| (0, x^*) \in \underline{d} f_t(x), \: t \in T(x) \Big\},
\end{equation}
where $T(x) = \{ t \in T \mid f(x, t) = f(x) \}$. To find a more convenient expression for the set on the right-hand
side of equality \eqref{eq:InfMaxSubdiffHypodiff}, introduce the function
\[
  \Phi(y) = \max_{(a, x^*) \in \underline{d} f(x)} (a + \langle x^*, y \rangle) \quad \forall y \in X.
\]
Clearly, $\Phi$ is a closed real-valued convex function and $\underline{d} f(x)$ is a hypodifferential of this function
at zero (note that $\Phi(0) = 0$, since $\max\{ a \mid (a, x^*) \in \underline{d} f(x) \} = 0$). Therefore,
by Theorem~\ref{thrm:HypodiffCharacterization} one has
\begin{equation} \label{eq:MaxHypodiffEnvelopeSubdiff}
  \partial \Phi(0) = \Big\{ x^* \in X^* \Bigm| (0, x^*) \in \underline{d} f(x) \Big\}.
\end{equation}
On the other hand, observe that for any $y \in X$ one has
\[
  \Phi(y) = \sup_{t \in T} \Psi(y, t), \quad 
  \Psi(y, t) := f(x, t) - f(x) + \max_{(a, x^*) \in \underline{d} f_t(x)} (a + \langle x^*, y \rangle \rangle).
\]
Clearly, for any $t \in T$ the function $y \mapsto \Psi(y, t)$ is a closed real-valued convex function that is bounded
on bounded sets (due to the boundedness of $\underline{d} f_t(x)$) and therefore continuous on $X$ by
\cite[Corollary~I.2.5]{EkelandTemam}. In turn, for any $y \in X$ the function $t \mapsto \Psi(y, t)$ is, as one can
readily
check, continuous due to the continuity of the maps $t \mapsto f(x, t)$ and $t \mapsto \underline{d} f_t(x)$. Note
finally that $\Psi(0, t) = \Phi(0)$ if and only if $t \in T(x)$, since $\Psi(0, t) = f(x, t) - f(x)$. Therefore, by the
theorem on the subdifferential of the supremum of an infinite family of convex functions
\cite[Theorem~4.2.3]{IoffeTihomirov} one has
\begin{equation} \label{eq:MaxHypodiffEnvelopeSubdiff_Alt}
  \partial \Phi(0) = \cl^* \Big\{ \partial_y \Psi(0, t) \Bigm| t \in T(x) \Big\},
\end{equation}
where $\partial_y \Psi(0, t)$ is the subdifferential of the map $\Psi(\cdot, t)$ at the origin. 

Taking into account the fact that for any $t \in T(x)$ one has
\[
  \Psi(y, t) = \max_{(a, x^*) \in \underline{d} f_t(x)} (a + \langle x^*, y \rangle) \quad \forall y \in X,
\]
one can conclude that $\underline{d} f_t(x)$ is a hypodifferential of $\Psi(\cdot, t)$ at the origin. Hence
\[
  \partial_y \Psi(0, t) = \Big\{ x^* \in X^* \Bigm| (0, x^*) \in \underline{d} f_t(x) \Big\} 
  \quad \forall t \in T(x)
\]
by Theorem~\ref{thrm:HypodiffCharacterization}. Now, combining the equality above with equalities
\eqref{eq:MaxHypodiffEnvelopeSubdiff}, \eqref{eq:MaxHypodiffEnvelopeSubdiff_Alt}, and \eqref{eq:InfMaxSubdiff} one
obtains that equality \eqref{eq:InfMaxSubdiffHypodiff} holds true. Thus, $\underline{d} f(x)$ is indeed a
hypodifferential of $f$ at $x$.

\textbf{3. Consistency.} Let us check that the hypodifferential map $\underline{d} f(\cdot)$ is consistent. Indeed, for
any $x \in Q$, $t \in T$, $(a, x^*) \in \underline{d} f_t(x)$, and $y \in X$ one has
\begin{equation} \label{eq:InfMaxConsistency}
  f(y) - f(x) \ge f(y, t) - f(x) \ge f(x, t) - f(x) + a + \langle x^*, y - x \rangle,
\end{equation}
since $\underline{d} f_t(x)$ is a consistent hypodifferential of $f_t$ at $x$ and $f(y) = \sup_{t \in T} f(y, t)$. 

Fix now any $x \in Q$ and $(a, x^*) \in D(x)$. Then by the definition of convex hull there exists $n \in \mathbb{N}$,
$t_i \in T$, $(a_i, x_i^*) \in \underline{d} f_{t_i}(x)$, and $\alpha_i \ge 0$, $i \in \{ 1, \ldots, n \}$, such that
\[
  (a, x^*) = \sum_{i = 1}^n \alpha_i (f(x, t_i) - f(x) + a_i, x_i^*), \quad 
  \sum_{i = 1}^n \alpha_i = 1.
\]
Hence applying inequality \eqref{eq:InfMaxConsistency} one gets that
\begin{equation} \label{eq:InfMaxConsistency_2}
\begin{split}
  f(y) - f(x) &= \sum_{i = 1}^n \alpha_i (f(y) - f(x)) 
  \\
  &\ge \sum_{i = 1}^n \alpha_i( f(x, t_i) - f(x) + a_i + \langle x^*, y - x \rangle )
  = a + \langle x^*, y - x \rangle
\end{split}
\end{equation}
for any $y \in X$.

Finally, fix any $(a, x^*) \in \underline{d} f(x)$, $y \in X$, and $\varepsilon > 0$. Since 
$\underline{d} f(x) = \cl^* D(x)$, one can find $(b, y^*) \in D(x)$ such that 
$|a - b| + |\langle y^* - x^*, y - x \rangle| < \varepsilon$. Consequently, with the use of inequality
\eqref{eq:InfMaxConsistency_2} one gets that
\[
  f(y) - f(x) \ge b + \langle y^*, y - x \rangle \ge a + \langle x^*, y - x \rangle - \varepsilon,
\]
which implies that 
\[
  f(y) - f(x) \ge a + \langle x^*, y - x \rangle \quad \forall y \in X
\] 
due to the fact that $\varepsilon > 0$ was chosen arbitrarily. Thus, the inequality above is satisfied for all 
$(a, x^*) \in \underline{d} f(x)$ and $y \in X$, which means that the hypodifferential $\underline{d} f(x)$ is
consistent.

\textbf{4. Continuity.} Let us finally prove the continuity of the hypodifferential map $\underline{d} f(\cdot)$ 
on $Q$. Indeed, as was noted above, $Q \subseteq \interior \dom f$. Therefore, the function $f$ is continuous on $Q$ 
by \cite[Corollary~I.2.5]{EkelandTemam}. Consequently, under the assumptions of the theorem the set-valued map  
\[
  \Big\{ (f(\cdot, t) - f(\cdot), 0) + \underline{d} f_t(x) \Bigm| t \in T \Big\}
\]
is continuous in the Pompeiu-Hausdorff metric on $Q$ by Proposition~\ref{prp:UnionContinuous}, which by 
Proposition~\ref{prp:ConvexHullContinuous} implies that the set-valued map $D(\cdot)$ is continuous on $Q$. Finally, 
the continuity of $\underline{d} f(\cdot) = \cl^* D(\cdot)$ then trivially follows from the continuity of $D(\cdot)$ in
the case when $\cl D(\cdot)$ is weak${}^*$ closed, since in this case $\underline{d} f(\cdot) = \cl D(\cdot)$ and
$d_{PH}(\cl D(x), \cl D(y)) = d_{PH}(D(x), D(y))$ for any $x, y \in Q$. In the case when $X$ is reflexive the continuity
of $\underline{d} f(\cdot)$ follows directly from Proposition~\ref{prp:WeakStarClosureContinuous}.
\end{proof}

\begin{corollary}[Finite maximum]
Let $f_i \colon X \to \mathbb{R} \cup \{ + \infty \}$, $i \in I = \{ 1, \ldots, n \}$, be closed convex functions such
that $\bigcap_{i \in I} \interior \dom f_i \ne \emptyset$. Suppose that there exist consistent continuous
hypodifferential mappings $\underline{d} f_i(\cdot)$ of these functions on a nonempty set 
$Q \subseteq \bigcap_{i \in I} \interior \dom f_i$. Then the function $f = \max_{i \in I} f_i$ is continuously 
hypodifferentiable on $Q$ and the set-valued map
\begin{equation} \label{eq:FiniteMaxHypodiff}
  \underline{d} f(\cdot) 
  = \co\Big\{ \{ (f_i(\cdot) - f(\cdot), 0) \} + \underline{d} f_i(\cdot) \Bigm| i \in I \Big\}
\end{equation}
is a consistent continuous hypodifferential of $f$ on $Q$.
\end{corollary}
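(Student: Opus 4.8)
The plan is to obtain this as the special case of Theorem~\ref{thrm:MaxInfFamily} in which the index set is finite. First I would equip $I = \{1, \ldots, n\}$ with the discrete topology, so that $I$ becomes a compact topological space, and set $f(x, i) := f_i(x)$, so that $\sup_{i \in I} f(\cdot, i) = \max_{i \in I} f_i = f$. Then I would verify the five hypotheses of Theorem~\ref{thrm:MaxInfFamily}: assumption~(i) holds automatically because every map on a discrete space is continuous; assumptions~(ii), (iii), and (iv) are verbatim the hypotheses of the corollary; and assumption~(v), the joint continuity of $(x, i) \mapsto \underline{d} f_i(x)$ on $Q \times I$, again reduces to the continuity of each $\underline{d} f_i(\cdot)$ in $x$ precisely because $I$ carries the discrete topology. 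Applying the theorem then yields at once that $f$ is hypodifferentiable on $Q$, that $\cl^* D(\cdot)$ with $D(x)$ as in \eqref{eq:InfMaxPreHypodiff} is a consistent hypodifferential of $f$, and that \eqref{eq:InfMaxHypodiff} specializes to the right-hand side of \eqref{eq:FiniteMaxHypodiff} apart from the outer weak${}^*$ closure.

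The real work is therefore to show that in the finite case the closure $\cl^*$ is superfluous, i.e. that
\[
  D(x) = \co\Big\{ \{ (f_i(x) - f(x), 0) \} + \underline{d} f_i(x) \Bigm| i \in I \Big\}
\]
is already weak${}^*$ compact. Writing $A_i(x) = \{ (f_i(x) - f(x), 0) \} + \underline{d} f_i(x)$, each $A_i(x)$ is a translate of the weak${}^*$ compact set $\underline{d} f_i(x)$, hence weak${}^*$ compact, and in the present setting convex. I would then represent $D(x)$ as the image of the compact set $\Delta_{n-1} \times A_1(x) \times \cdots \times A_n(x)$, where $\Delta_{n-1}$ is the standard simplex, under the map $(\lambda, z_1, \ldots, z_n) \mapsto \sum_{i \in I} \lambda_i z_i$. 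Because scalar multiplication is jointly continuous on the bounded set $[0, 1] \times A_i(x)$ and addition is weak${}^*$ continuous, this map is weak${}^*$ continuous, so its image $D(x)$ is weak${}^*$ compact; here convexity of the $A_i(x)$ is what makes this image equal to $\co(\bigcup_i A_i(x))$ rather than a proper subset. Consequently $\cl^* D(x) = D(x)$, the formula \eqref{eq:FiniteMaxHypodiff} holds verbatim, and since $\cl D(x) = D(x)$ is weak${}^*$ closed, the additional clause of Theorem~\ref{thrm:MaxInfFamily} furnishes the continuity of $\underline{d} f(\cdot)$ on $Q$, so that $f$ is continuously hypodifferentiable.

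The main obstacle is exactly this compactness point. One cannot simply invoke ``the convex hull of a compact set is compact'', since in infinite dimensions the convex hull of a single weak${}^*$ compact set need not even be weak${}^*$ closed; the argument genuinely exploits both the finiteness of $I$ (so that $D(x)$ is the continuous image of a \emph{finite} product of compact sets) and the convexity of the building blocks $A_i(x)$ (so that this image is the full convex hull). Everything else—hypodifferentiability, the explicit form of the hypodifferential, and its consistency—is inherited directly from Theorem~\ref{thrm:MaxInfFamily}, and therefore requires no separate verification.
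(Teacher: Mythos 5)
Your proposal follows exactly the route of the paper's own proof: endow $I$ with the discrete topology, check that the hypotheses of Theorem~\ref{thrm:MaxInfFamily} specialize to those of the corollary, and then observe that the weak${}^*$ closure in \eqref{eq:InfMaxHypodiff} is redundant because the convex hull of finitely many weak${}^*$ compact convex sets is already weak${}^*$ compact. The only difference is that the paper states this last compactness fact without proof, whereas you spell out the standard argument via the continuous image of $\Delta_{n-1} \times A_1(x) \times \cdots \times A_n(x)$; both versions rely in the same (implicit) way on the convexity of the sets $\underline{d} f_i(x)$.
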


\begin{proof}
Put $T = \{ 1, \ldots, n \}$ and endow it with discrete topology. For any $x \in X$ and $i \in T$ define 
$f(x, i) = f_i(x)$. Then, as one can readily check, all assumptions of the previous theorem hold true. It remains to
note that the set $\underline{d} f(x)$, defined according to equality \eqref{eq:FiniteMaxHypodiff}, is weak${}^*$ closed
for any $x \in Q$ as the convex hull of a \textit{finite} number of weak${}^*$ compact convex sets.
\end{proof}

Recall that a function $f \colon \mathbb{R}^n \to \mathbb{R}$ is called non-decreasing, if $f(x) \le f(y)$ for any 
$x, y \in \mathbb{R}^n$ such that $x \le y$, where the inequality $x \le y$ is understood coordinate-wise.

\begin{theorem}[Outer composition with differentiable function] \label{thm:OuterComposition}
Let $f_i \colon X \to \mathbb{R} \cup \{ + \infty \}$, $i \in I = \{ 1, \ldots, n \}$, be closed convex functions and
there exist consistent continuous hypodifferential mappings $\underline{d} f_i(\cdot)$ of these functions on a set 
$Q \subseteq \bigcap_{i \in I} \interior \dom f_i$. Let also  $g \colon \mathbb{R}^n \to \mathbb{R}$, $g = g(y)$, be a
non-decreasing convex function that is continuously differentiable in a neighbourhood of the set $f(Q)$, where 
$f = (f_1, \ldots, f_n)$. Then the convex function $h(\cdot) = g(f_1(\cdot), \ldots, f_n(\cdot))$ is defined in 
a neighbourhood of $Q$, continuously hypodifferentiable on $Q$, and the set-valued map
\begin{equation} \label{eq:OuterCompositionHypodiff}
  \underline{d} h(\cdot) = \sum_{i = 1}^n \frac{\partial g(f(\cdot))}{\partial y_i} \underline{d} f_i(\cdot)
\end{equation}
is a consistent continuous hypodifferential of $h$ on $Q$ (here $h(x) = + \infty$, if $f_i(x) = + \infty$ for some 
$i \in I$).
\end{theorem}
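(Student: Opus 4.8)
The plan is to follow the four-part structure of the preceding theorem's proof: establish convexity and well-definedness of $h$, verify the formula \eqref{eq:OuterCompositionHypodiff} against the characterization in Theorem~\ref{thrm:HypodiffCharacterization}, check consistency, and finally check continuity. Throughout I write $\lambda_i(x) = \partial g(f(x)) / \partial y_i$ and record two facts: $\lambda_i(x) \ge 0$ because $g$ is nondecreasing, and $\nabla g$ is continuous on a neighbourhood of $f(Q)$ since a finite convex function differentiable on an open set is automatically continuously differentiable there (see, e.g.\ \cite[Thm.~25.5]{Rockafellar}). Since $g$ is nondecreasing and convex while each $f_i$ is convex, $h = g(f_1, \ldots, f_n)$ is convex; and since $Q \subseteq \bigcap_{i \in I} \interior \dom f_i$, each $f_i$ is finite and continuous (hence locally Lipschitz) on a neighbourhood of $Q$ by \cite[Crlr.~I.2.5]{EkelandTemam}, so $h$ is finite on a neighbourhood of $Q$ and hypodifferentiable on $Q$ by Theorem~\ref{thrm:HypodiffCharacterization}.

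The core step is to show that the finite Minkowski sum $\underline{d} h(x) = \sum_{i = 1}^n \lambda_i(x) \underline{d} f_i(x)$ is a hypodifferential of $h$ at each $x \in Q$, which I verify against the three conditions of part~3 of Theorem~\ref{thrm:HypodiffCharacterization}. As a finite sum of nonnegative scalar multiples of weak${}^*$ compact sets it is weak${}^*$ compact. Condition (a) is immediate: each $\underline{d} f_i(x)$ has nonpositive first coordinate and $\lambda_i(x) \ge 0$. For conditions (b)--(c) I first compute the directional derivative: using the differentiability of $g$ at $f(x)$ together with the local Lipschitz continuity of each $f_i$ (which forces $\| f(x + \alpha u) - f(x) \| = O(\alpha)$, so the first-order remainder of $g$ is $o(\alpha)$), I obtain $h'(x, u) = \sum_{i = 1}^n \lambda_i(x) f_i'(x, u)$ for every $u \in X$. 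Since $f_i'(x, u) = \max_{x_i^* \in \partial f_i(x)} \langle x_i^*, u \rangle$ and the maxima are separable (here $\lambda_i(x) \ge 0$ is used), this gives $h'(x, u) = \max\{ \langle x^*, u \rangle \mid x^* \in \sum_{i = 1}^n \lambda_i(x) \partial f_i(x) \}$, whence the chain rule $\partial h(x) = \sum_{i = 1}^n \lambda_i(x) \partial f_i(x)$, the sum being weak${}^*$ compact so that no closure is needed. To match this with condition (c), observe that $(0, x^*) \in \sum_{i = 1}^n \lambda_i(x) \underline{d} f_i(x)$ means $x^* = \sum_{i = 1}^n \lambda_i(x) x_i^*$ with $(a_i, x_i^*) \in \underline{d} f_i(x)$ and $\sum_{i = 1}^n \lambda_i(x) a_i = 0$; as each $a_i \le 0$ and $\lambda_i(x) \ge 0$, every term $\lambda_i(x) a_i$ vanishes, so $x_i^* \in \partial f_i(x)$ whenever $\lambda_i(x) > 0$ (the $\lambda_i(x) = 0$ terms contributing $0$). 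Hence $\{ x^* \mid (0, x^*) \in \underline{d} h(x) \} = \sum_{i = 1}^n \lambda_i(x) \partial f_i(x) = \partial h(x)$, which is (c), and (b) follows by selecting any $(0, x_i^*) \in \underline{d} f_i(x)$.

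For consistency, fix $x \in Q$, an element $(a, x^*) = \sum_{i = 1}^n \lambda_i(x)(a_i, x_i^*) \in \underline{d} h(x)$ with $(a_i, x_i^*) \in \underline{d} f_i(x)$, and any $y \in X$. The gradient inequality for the convex function $g$ at $f(x)$ yields $h(y) - h(x) = g(f(y)) - g(f(x)) \ge \sum_{i = 1}^n \lambda_i(x)(f_i(y) - f_i(x))$ (trivially valid when some $f_i(y) = + \infty$, since then $h(y) = + \infty$). Applying the consistency of each $\underline{d} f_i$ and using $\lambda_i(x) \ge 0$ then gives $h(y) - h(x) \ge \sum_{i = 1}^n \lambda_i(x)(a_i + \langle x_i^*, y - x \rangle) = a + \langle x^*, y - x \rangle$, which is exactly \eqref{eq:ConsistentHypodiff}.

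Continuity is the least delicate part: $\lambda_i(\cdot) = (\partial g / \partial y_i) \circ f$ is continuous on $Q$ as a composition of the continuous maps $f$ and $\nabla g$; each $\underline{d} f_i(\cdot)$ is continuous and (being weak${}^*$ compact, hence norm bounded) has bounded values, so each $\lambda_i(\cdot) \underline{d} f_i(\cdot)$ is continuous by Prp.~\ref{prp:ProductContinuous}, and their finite Minkowski sum $\underline{d} h(\cdot)$ is continuous by Prp.~\ref{prp:SumContinuous}. I expect the main obstacle to be the directional-derivative computation underlying the chain rule in the second paragraph --- pinning down that the first-order remainder of $g$ is genuinely $o(\alpha)$ via the local Lipschitzness of the $f_i$, and that passing to support functions legitimately produces $\sum_{i = 1}^n \lambda_i(x) \partial f_i(x)$ as $\partial h(x)$; the sign argument identifying the zero-level slice of the Minkowski sum is then routine.
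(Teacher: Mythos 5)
Your proof is correct, and its consistency and continuity parts coincide with the paper's almost verbatim: the paper likewise derives consistency from the gradient inequality for $g$ combined with the consistency of each $\underline{d} f_i$ and the nonnegativity of the partial derivatives, and likewise obtains continuity from Prp.~\ref{prp:ProductContinuous} and Prp.~\ref{prp:SumContinuous}. The one place you genuinely diverge is the central step --- that $h$ is hypodifferentiable and that \eqref{eq:OuterCompositionHypodiff} is a hypodifferential of $h$. The paper does not argue this in-text at all; it defers to an external reference (Thm.~2 of the author's earlier paper on codifferential descent). You instead give a self-contained verification against the three conditions of part~3 of Thm.~\ref{thrm:HypodiffCharacterization}: the directional-derivative chain rule $h'(x,u)=\sum_i \lambda_i(x) f_i'(x,u)$ (correctly justified via local Lipschitzness of the $f_i$ making the first-order remainder of $g$ equal to $o(\alpha)$), the resulting identity $\partial h(x)=\sum_i \lambda_i(x)\partial f_i(x)$, and the sign argument showing that the zero-level slice of the Minkowski sum is exactly this set. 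This route buys a proof that is internal to the paper and showcases the characterization theorem as the working tool it is meant to be, at the cost of a somewhat longer argument; the paper's citation is shorter but leaves the reader to consult the codifferential literature. All the delicate points you flagged (the $o(\alpha)$ estimate, the separability of the maxima under $\lambda_i(x)\ge 0$, the weak${}^*$ compactness of the finite Minkowski sum so that no closure is needed) are handled correctly.
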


\begin{proof}
The convexity of the composition $h = g \circ f$ follows from the fact that $g$ is non-decreasing and can be easily
verified directly. Note also that the partial derivatives $\partial g(f(\cdot))/\partial y_i$ are nonnegative, since $g$
is non-decreasing.

The fact that $h$ is hypodifferentiable on $Q$ and \eqref{eq:OuterCompositionHypodiff} is its hypodifferential mapping
on this set is proved in exactly the same way as \cite[Theorem~2]{Dolgopolik_CodiffDescent}. The continuity of this
mapping follows from Propositions~\ref{prp:SumContinuous} and \ref{prp:ProductContinuous} and the fact that $f_i$ are
continuous on $Q$ by Proposition~\ref{prp:BoundedHypodiff_LipContin}.

Let us finally check that the hypodifferential mapping \eqref{eq:OuterCompositionHypodiff} is consistent on $Q$. Indeed,
choose any $x \in Q$ and $(a, x^*) \in \underline{d} h(x)$. Then there exists $(a_i, x_i^*) \in \underline{d} f_i(x)$,
$i \in I$ such that
\[
  (a, x^*) = \sum_{i = 1}^n \frac{\partial g(f(x))}{\partial y_i} (a_i, x_i^*).
\]
Choose any $y \in X$. Bearing in mind the convexity of $g$ one has
\[
  h(y) - h(x) = g(f(y)) - g(f(x)) \ge \sum_{i = 1}^n \frac{\partial g(f(x))}{\partial y_i} 
  \big( f_i(y) - f_i(x) \big).
\]
Since the hypodifferentials $\underline{d} f_i$ are consistent, one has
\[
  f_i(y) - f_i(x) \ge a_i + \langle x_i^*, y - x \rangle \quad \forall i \in I.
\]
Hence taking into account the fact that the partial derivatives $\partial g(f(x))/\partial y_i$ are nonnegative one
obtains that
\[
  h(y) - h(x) \ge \sum_{i = 1}^n \frac{\partial g(f(x))}{\partial y_i}
  \big( a_i + \langle x_i^*, y - x \rangle \big) = a + \langle x^*, y - x \rangle,
\]
which completes the proof.
\end{proof}

\begin{remark}
The theorem above can be slightly generalised in the following way. Namely, suppose that $K \subset \mathbb{R}^n$ is a
convex cone such that if $x \in K$ and $x \le y$, then $y \in K$ (in particular, $K$ can be the nonnegative orthant).
Then under the assumption that $f$ maps $X$ to $K$ one can suppose that $g$ is defined and nondecreasing only on $K$.
\end{remark}

\begin{corollary}[Conic combination]
Let $f_i \colon X \to \mathbb{R} \cup \{ + \infty \}$, $i \in I = \{ 1, \ldots, n \}$, be closed convex functions and
there exist consistent continuous hypodifferential mappings $\underline{d} f_i(\cdot)$ of these functions on a set 
$Q \subseteq \bigcap_{i \in I} \dom f_i$. Then for any $\lambda_i \ge 0$, $i \in I$, the function 
$f = \sum_{i \in I} \lambda_i f_i$ is continuously hypodifferentiable on $Q$ and the set-valued map
\[
  \underline{d} f(\cdot) =\sum_{i \in I} \lambda_i \underline{d} f_i(\cdot)
\]
is a consistent continuous hypodifferential of $f$ on $Q$.
\end{corollary}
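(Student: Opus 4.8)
The plan is to derive this corollary as an immediate special case of Theorem~\ref{thm:OuterComposition} (outer composition with a differentiable function), taking the outer function to be linear. Specifically, I would set $g \colon \mathbb{R}^n \to \mathbb{R}$, $g(y) = \sum_{i \in I} \lambda_i y_i$, so that $h(\cdot) = g(f_1(\cdot), \ldots, f_n(\cdot)) = \sum_{i \in I} \lambda_i f_i(\cdot) = f(\cdot)$. The entire argument then reduces to checking that this choice of $g$ meets the hypotheses of the composition theorem and reading off its conclusion.

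Next I would verify that $g$ satisfies those hypotheses. The function $g$ is linear, hence convex; it is differentiable on all of $\mathbb{R}^n$, in particular in a neighbourhood of $f(Q)$, with constant partial derivatives $\partial g / \partial y_i = \lambda_i$; and since $\lambda_i \ge 0$ for every $i \in I$, it is nondecreasing. The remaining hypotheses of Theorem~\ref{thm:OuterComposition} --- that the $f_i$ are closed convex functions admitting consistent continuous hypodifferential mappings $\underline{d} f_i(\cdot)$ on $Q$ --- are precisely the assumptions of the corollary. I would also note that these assumptions already force $Q \subseteq \bigcap_{i \in I} \interior \dom f_i$: by the definition of a continuous hypodifferential mapping, such a mapping can exist on $Q$ only when $Q$ lies in $\interior \dom f_i$ for each $i$. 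This reconciles the slightly weaker domain hypothesis $Q \subseteq \bigcap_{i \in I} \dom f_i$ stated in the corollary with the interior requirement in Theorem~\ref{thm:OuterComposition}.

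With the hypotheses in place, Theorem~\ref{thm:OuterComposition} yields directly that $h = f$ is continuously hypodifferentiable on $Q$ and that the multifunction \eqref{eq:OuterCompositionHypodiff} is a consistent continuous hypodifferential of $f$ on $Q$. Substituting $\partial g(f(\cdot))/\partial y_i = \lambda_i$ into \eqref{eq:OuterCompositionHypodiff} collapses the formula to $\underline{d} f(\cdot) = \sum_{i \in I} \lambda_i \underline{d} f_i(\cdot)$, which is exactly the asserted hypodifferential, thereby completing the proof.

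There is essentially no hard step here, as the corollary is a straightforward specialization of the composition theorem. The only point requiring a moment's care is confirming that the linear outer function $g$ is genuinely nondecreasing --- which is precisely what the sign condition $\lambda_i \ge 0$ guarantees --- together with matching the domain conventions as noted above.
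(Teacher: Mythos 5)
Your proposal is correct and is precisely the intended argument: the paper states this corollary immediately after Theorem~\ref{thm:OuterComposition} without a separate proof, because it follows by taking the outer function $g(y) = \sum_{i \in I} \lambda_i y_i$, which is linear, nondecreasing (by $\lambda_i \ge 0$), and everywhere differentiable with $\partial g/\partial y_i = \lambda_i$, so that formula \eqref{eq:OuterCompositionHypodiff} collapses to $\sum_{i \in I} \lambda_i \underline{d} f_i(\cdot)$. Your remark reconciling the domain hypotheses via the definition of continuous hypodifferentiability is also the right reading.
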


\begin{corollary}
Let there exist a consistent continuous hypodifferential mapping $\underline{d} f(\cdot)$ of $f$ on a set 
$Q \subseteq \interior \dom f$. Then the functions $\max\{ 0, f(\cdot) \}^p$, $p > 1$, and $\exp(f(\cdot))$ are
continuously hypodifferentiable on $Q$ and the set-valued maps
\[
  \max\{ 0, f(\cdot) \}^{p - 1} \underline{d} f(\cdot), \quad 
  \exp(f(\cdot)) \underline{d} f(\cdot)
\]
are their consistent continuous hypodifferential mappings on the set $Q$.
\end{corollary}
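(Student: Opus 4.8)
The plan is to derive both results as direct applications of the outer composition theorem (Theorem~\ref{thm:OuterComposition}). In each case I need to exhibit a suitable outer function $g \colon \mathbb{R} \to \mathbb{R}$ (here $n = 1$) that is nondecreasing, convex, and differentiable in a neighbourhood of $f(Q)$, and then compute the resulting formula \eqref{eq:OuterCompositionHypodiff}, which in the single-variable case reduces to $\underline{d} h(\cdot) = g'(f(\cdot)) \underline{d} f(\cdot)$.

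\textbf{Case $g(y) = \max\{0, y\}^p$ with $p > 1$.} First I would verify that $g$ is well-defined and convex on all of $\mathbb{R}$: it is the composition of the convex nondecreasing function $t \mapsto t^p$ on $[0, +\infty)$ with the convex nondecreasing function $y \mapsto \max\{0, y\}$, hence convex and nondecreasing. The key technical point is differentiability: for $p > 1$ the function $g$ is continuously differentiable on all of $\mathbb{R}$ with $g'(y) = p \max\{0, y\}^{p-1}$ (at $y = 0$ both one-sided derivatives vanish because $p - 1 > 0$), so $g$ is differentiable in a neighbourhood of $f(Q)$ regardless of where $f(Q)$ sits. Applying Theorem~\ref{thm:OuterComposition} then gives that $h = g \circ f = \max\{0, f(\cdot)\}^p$ is continuously hypodifferentiable on $Q$ with consistent continuous hypodifferential $g'(f(\cdot)) \underline{d} f(\cdot) = p \max\{0, f(\cdot)\}^{p-1} \underline{d} f(\cdot)$. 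The factor $p$ is a positive constant, so it can be absorbed (scaling a hypodifferential by a positive constant preserves all three defining properties by the characterization in Theorem~\ref{thrm:HypodiffCharacterization}), leaving exactly $\max\{0, f(\cdot)\}^{p-1} \underline{d} f(\cdot)$ as claimed.

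\textbf{Case $g(y) = \exp(y)$.} This is more immediate: $\exp$ is smooth everywhere, strictly increasing, and strictly convex, with $g'(y) = \exp(y)$. The hypotheses of Theorem~\ref{thm:OuterComposition} are satisfied trivially, and \eqref{eq:OuterCompositionHypodiff} yields directly that $\exp(f(\cdot))$ is continuously hypodifferentiable on $Q$ with consistent continuous hypodifferential $\exp(f(\cdot)) \underline{d} f(\cdot)$.

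\textbf{The main obstacle} I anticipate is purely a matter of checking hypotheses rather than any substantive difficulty: one must confirm that $Q \subseteq \interior \dom f$ guarantees $f(Q)$ lies in the domain where $g$ is differentiable. For $\exp$ this is automatic since $g$ is entire; for $\max\{0, \cdot\}^p$ the only delicate point is differentiability \emph{at} the origin, which holds precisely because $p > 1$ (for $p = 1$ the function $\max\{0, y\}$ is not differentiable at $0$ and the argument would break down). I would also note that the absorption of the constant factor $p$ in the first case, while routine, is worth stating explicitly so that the formula matches the clean statement given. No genuinely hard step arises, as both results are corollaries engineered to illustrate Theorem~\ref{thm:OuterComposition} with specific scalar outer functions.
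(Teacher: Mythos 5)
Your overall route is exactly the intended one: the paper gives no separate proof of this corollary, and it is meant to follow by applying Theorem~\ref{thm:OuterComposition} with $n = 1$ and the scalar outer functions $g(y) = \max\{0, y\}^p$ and $g(y) = \exp(y)$, checking (as you do) that each is convex, nondecreasing, and continuously differentiable near $f(Q)$ --- with the differentiability of $\max\{0,\cdot\}^p$ at the origin being precisely where $p > 1$ is used. The $\exp$ case is therefore fine as you have written it.

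There is, however, a genuine error in how you dispose of the factor $p$ in the first case. Your claim that ``scaling a hypodifferential by a positive constant preserves all three defining properties by the characterization in Theorem~\ref{thrm:HypodiffCharacterization}'' is false: if $H$ is a hypodifferential of $h$ at $x$, then condition (c) of that characterization reads $\{ x^* \mid (0, x^*) \in H \} = \partial h(x)$, and replacing $H$ by $cH$ with $c > 0$ replaces the left-hand side by $c\, \partial h(x)$, which equals $\partial h(x)$ only when $c = 1$ or $\partial h(x) = \{0\}$. Equivalently, from Definition~\ref{def:Hypodifferential}, the max-type approximation generated by $cH$ recovers $c\, h'(x,\cdot)$ rather than $h'(x,\cdot)$; in short, $cH$ is a hypodifferential of $c\,h$, not of $h$. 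Consequently Theorem~\ref{thm:OuterComposition} yields that $p \max\{0, f(\cdot)\}^{p-1}\, \underline{d} f(\cdot)$ is a consistent continuous hypodifferential of $\max\{0, f(\cdot)\}^p$, and the formula printed in the corollary without the factor $p$ is \emph{not} a hypodifferential of that function at any point where $f(x) > 0$ and $\partial f(x) \ne \{0\}$. The discrepancy is a typo in the corollary's displayed formula (the factor $p$ is missing), not something that can be absorbed; the correct fix is to restore the $p$, not to invent a scaling invariance that the definition does not have.
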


The following result can be readily verified directly

\begin{proposition}[Composition with affine map] \label{prp:AffineShiftHypodiff}
Let $Y$ be a real Banach space, $A \colon Y \to X$ be a continuous linear operator, $b \in X$ be fixed, and there
exist a consistent continuous hypodifferential map $\underline{d} f(\cdot)$ of $f$ on a set 
$Q \subseteq \interior \dom f$. Then the function $g(y) = f(A y + b)$, $y \in Y$, is continuously hypodifferentiable on
the set $A^{-1}(Q - b)$ and the set-valued map
\begin{equation} \label{eq:AffineShiftHypodiff}
  \underline{d} g(\cdot) 
  = \Big\{ (a, A^* x^*) \in \mathbb{R} \times Y^* \Bigm| (a, x^*) \in \underline{d} f(\cdot) \Big\},
\end{equation}
is a consistent continuous hypodifferential map of $g$ on $A^{-1}(Q - b)$, where $A^*$ is the adjoint operator of $A$.
\end{proposition}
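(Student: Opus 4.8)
The plan is to reduce every assertion to the corresponding property of $\underline{d} f$ by transporting it through the bounded linear map $\Lambda \colon \mathbb{R} \times X^* \to \mathbb{R} \times Y^*$, $\Lambda(a, x^*) = (a, A^* x^*)$, so that $\underline{d} g(y) = \Lambda(\underline{d} f(A y + b))$, which is exactly \eqref{eq:AffineShiftHypodiff}. First I would fix $y$ in $P := A^{-1}(Q - b)$ and set $x = A y + b \in Q$. Since $A$ is continuous and $Q \subseteq \interior \dom f$, the set $A^{-1}(\interior(\dom f - b))$ is open, contained in $\dom g$, and contains $P$, so $P \subseteq \interior \dom g$; in particular $g(y) = f(x)$ is finite and $g(y + \alpha \Delta y) - g(y) = f(x + \alpha A \Delta y) - f(x)$ for every $\Delta y \in Y$ and $\alpha > 0$.

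The core step is to verify Def.~\ref{def:Hypodifferential} directly at $y$. The key identity is $\langle x^*, A \Delta y \rangle = \langle A^* x^*, \Delta y \rangle$, which gives $\max_{(a, y^*) \in \underline{d} g(y)} (a + \alpha \langle y^*, \Delta y \rangle) = \max_{(a, x^*) \in \underline{d} f(x)} (a + \alpha \langle x^*, A \Delta y \rangle)$. Substituting this together with $g(y + \alpha \Delta y) - g(y) = f(x + \alpha A \Delta y) - f(x)$ into the quotient in \eqref{eq:HypodiffDef} shows that the limit expressing hypodifferentiability of $g$ at $y$ in direction $\Delta y$ coincides with the one expressing hypodifferentiability of $f$ at $x$ in direction $\Delta x = A \Delta y$, and the latter vanishes by hypothesis. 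The normalization $\max_{(a, y^*) \in \underline{d} g(y)} a = 0$ is inherited because $\Lambda$ leaves the real coordinate untouched, and $\underline{d} g(y)$ is weak${}^*$ compact because $A^*$ is $\sigma(X^*, X)$-$\sigma(Y^*, Y)$ continuous (the adjoint of a bounded operator is always weak${}^*$ continuous), so $\Lambda$ carries the weak${}^*$ compact set $\underline{d} f(x)$ onto a weak${}^*$ compact set.

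Consistency I would obtain by a one-line computation: for $(a, A^* x^*) \in \underline{d} g(y)$ with $(a, x^*) \in \underline{d} f(x)$ and arbitrary $y' \in Y$, setting $x' = A y' + b$ and using $x' - x = A(y' - y)$ yields $g(y') - g(y) = f(x') - f(x) \ge a + \langle x^*, x' - x \rangle = a + \langle A^* x^*, y' - y \rangle$, where the inequality is precisely the consistency \eqref{eq:ConsistentHypodiff} of $\underline{d} f$ on $Q$ (which holds for all $x' \in X$, hence in particular for $x' = A y' + b$).

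Finally, for continuity I would note that $\Lambda$ is Lipschitz in the norm of $\mathbb{R} \times X^*$ with constant $\max\{1, \|A\|\}$, since $\|A^* x^*\| \le \|A\| \, \|x^*\|$; consequently $d_{PH}(\Lambda(S_1), \Lambda(S_2)) \le \max\{1, \|A\|\} \, d_{PH}(S_1, S_2)$ for all nonempty $S_1, S_2$. Composing this bound with the continuity of the affine map $y \mapsto A y + b$ from $P$ into $Q$ and the assumed Pompeiu-Hausdorff continuity of $\underline{d} f$ on $Q$ gives the continuity of $\underline{d} g(\cdot) = \Lambda \circ \underline{d} f \circ (A \cdot + b)$ on $P$, and hence the continuous hypodifferentiability of $g$ there. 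I do not expect a genuine obstacle: the whole argument is a transport of Def.~\ref{def:Hypodifferential} and of consistency through $\Lambda$, and the only two points demanding care are the weak${}^*$ compactness of the image (handled by weak${}^*$ continuity of $A^*$) and the Lipschitz estimate for the Pompeiu-Hausdorff distance under the pushforward $\Lambda$.
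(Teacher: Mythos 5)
Your proof is correct, and it is exactly the direct verification the paper has in mind: the paper offers no proof, stating only that the result ``can be readily verified directly.'' All the points that need care --- $A^{-1}(Q-b)\subseteq\interior\dom g$, weak${}^*$ compactness of the image via the $\sigma(X^*,X)$--$\sigma(Y^*,Y)$ continuity of $A^*$, and the Lipschitz bound for the Pompeiu--Hausdorff distance under the pushforward $\Lambda$ --- are handled properly.
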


\subsection{Examples}
\label{subsect:Examples}

Let us give several nontrivial examples of hypodifferentials of nonsmooth convex functions. It should be noted that the
hypodifferentials computed in all examples below are \textit{global} (see Remark~\ref{rmrk:GlobalHypodifferential}).

\begin{example}[Polyhedral function]
Let $X$ be the space $\mathbb{R}^d$ equipped with the Euclidean norm $|\cdot|$. Let also 
$f \colon \mathbb{R}^d \to \mathbb{R}$ be a polyhedral convex function, that is, the epigraph of $f$ is the intersection
of a finite family of closed half-spaces. Then there exist $n \in \mathbb{N}$, 
$a_i \in \mathbb{R}$ and $v_i \in \mathbb{R}^d$, $i \in I = \{ 1, \ldots, n \}$, such that
\[
  f(x) = \max_{i \in I} (a_i + \langle v_i, x \rangle) \quad \forall x \in \mathbb{R}^d
\]
(see \cite[Section~19]{Rockafellar}). Consequently, for any $x, \Delta x \in \mathbb{R}^d$ one has
\begin{equation} \label{eq:PolyFunc_GlobalHypodiff}
\begin{split}
  f(x + \Delta x) - f(x) 
  &= \max_{i \in I} \big( a_i + \langle v_i, x \rangle - f(x) + \langle v_i, \Delta x \rangle \big)
  \\
  &= \max_{(a, v) \in \underline{d} f(x)} (a + \langle v, x \rangle),
\end{split}
\end{equation}
where
\begin{equation} \label{eq:PolyFuncHypodiff}
  \underline{d} f(\cdot) = \co\Big\{ \big( a_i + \langle v_i, \cdot \rangle - f(\cdot), v_i \big) \in \mathbb{R}^{d + 1}
  \Bigm| i \in I \Big\}.
\end{equation}
Clearly, $\underline{d} f(x)$ is a convex compact set and $\max\{ a \mid (a, v) \in \underline{d} f(x) \} = 0$ for
any $x \in \mathbb{R}^d$. Therefore, $f$ is hypodifferentiable on $\mathbb{R}^d$ and $\underline{d} f(\cdot)$ is its
hypodifferential map on $\mathbb{R}^d$ that is consistent and global due to equality \eqref{eq:PolyFunc_GlobalHypodiff}.

Let us show that the set-valued map $\underline{d} f(\cdot)$ is continuous. Indeed, fix any $x, y \in \mathbb{R}^d$ and
choose any $(a, v) \in \underline{d} f(x)$. Then by definition there exist $\alpha_i \ge 0$, $i \in I$, such that
\[
  (a, v) = \sum_{i = 1}^n \alpha_i (a_i + \langle v_i, x \rangle - f(x), v_i), \quad \sum_{i = 1}^n \alpha_i = 1.
\]
Define
\[
  b = \sum_{i = 1}^n \alpha_i (a_i + \langle v_i, y \rangle - f(y)).
\]
Then $(b, v) \in \underline{d} f(y)$ and taking into account the fact that $f$ is globally Lipschitz continuous with
Lipschitz constant $L = \max_{i \in I} |v_i|$ (see Proposition~\ref{prp:BoundedHypodiff_LipContin}) one obtains that
\begin{align*}
  \dist( (a, v), \underline{d} f(y) ) &\le \big| (a, v) - (b, v) \big| = |a - b|
  \\
  &\le \sum_{i = 1}^n \alpha_i \big( |\langle v_i, x - y \rangle| + |f(x) - f(y)| \big)
  \\
  &\le \max_{i \in I} |v_i| |x - y| + L |x - y| = 2 L |x - y|.
\end{align*}
Taking the supremum over all $(a, v) \in \underline{d} f(x)$ and then repeating the same argument with $x$ and $y$
swapped one obtains that
\[
  d_{PH}\big( \underline{d} f(x), \underline{d} f(y) \big) \le 2 \Big( \max_{i \in I} |v_i| \Big) |x - y| 
  \quad \forall x, y \in \mathbb{R}^d.
\]
Thus, $f$ is continuously hypodifferentiable on $\mathbb{R}^d$ and the set-valued map \eqref{eq:PolyFuncHypodiff} is its
consistent continuous hypodifferential map on $\mathbb{R}^d$.
\end{example}

\begin{example}[Sublinear function] \label{ex:SublinearFunction}
Let $f \colon X \to \mathbb{R}$ be a closed sublinear function. Then the subdifferential $\partial f(0)$ is weak${}^*$
compact and
\[
  f(x) = \max_{x^* \in \partial f(0)} \langle x^*, x \rangle \quad \forall x \in X
\]
(see, e.g. \cite[Theorem~2.4.9]{Zalinescu}). Consequently, for any $x, \Delta x \in X$ one has
\begin{equation} \label{eq:SublinFunc_GlobalHypodiff}
  f(x + \Delta x) - f(x) = \max_{x^* \in \partial f(0)} 
  \big( \langle x^*, x \rangle - f(x) + \langle x^*, \Delta x \rangle \big)
  = \max_{(a, x^*) \in \underline{d} f(x)} (a + \langle x^*, x \rangle),
\end{equation}
where
\begin{equation} \label{eq:SublinearFuncHypodiff}
  \underline{d} f(\cdot) = \Big\{ (\langle x^*, \cdot \rangle - f(\cdot), x^*) \in \mathbb{R} \times X^* \Bigm|
  x^* \in \partial f(0) \Big\}
\end{equation}
Observe that for any $x \in X$ one has
\[
  \max_{(a, x^*) \in \underline{d} f(x)} a = \max_{x^* \in \partial f(0)} (\langle x^*, x \rangle - f(x))
  = f(x) - f(x) = 0.
\]
Moreover, $\underline{d} f(x) = \mathcal{T}(\partial f(0))$ with
\[
  \mathcal{T}(x^*) = \Big( \langle x^*, x \rangle - f(x), x^* \Big) \in \mathbb{R} \times X^*
  \quad \forall x^* \in X^*.
\]
Clearly, $\mathcal{T}$ is an affine map, which implies that the set $\underline{d} f(x)$ is convex. Moreover, as one
can readily check, $\mathcal{T}$ continuously maps $X^*$ endowed with the weak${}^*$ topology to 
$\mathbb{R} \times X^*$ equipped with the weak${}^*$ topology. Therefore the set $\underline{d} f(x)$ is weak${}^*$
compact as the continuous image of a compact set. Thus, $f$ is hypodifferentiable on $X$ and the set-valued map
\eqref{eq:SublinearFuncHypodiff} is its consistent hypodifferential mapping on $X$ that is, furthermore, global (see
equality \eqref{eq:SublinFunc_GlobalHypodiff}).

Let us prove that the map $\underline{d} f$ is continuous. Indeed, note that $f$ is globally Lipschitz continuous with
Lipschitz constant $L = \sup\{ \| x^* \| \mid x^* \in \partial f(0) \}$ (see
Proposition~\ref{prp:BoundedHypodiff_LipContin}). Fix any $x, y \in X$ and choose $(a, x^*) \in \underline{d} f(x)$.
Then by definion $x^* \in \partial f(0)$ and $a = \langle x^*, x \rangle - f(x)$. Define 
$b = \langle x^*, y \rangle - f(y)$. Then $(b, x^*) \in \underline{d} f(y)$ and
\begin{align*}
  \dist\big( (a, x^*), \underline{d} f(y) \big) &\le \| (a, x^*) - (b, x^*) \| = |a - b| 
  \\
  &\le |\langle x^*, x - y \rangle| + |f(x) - f(y)| \le 2 L \| x - y \|
\end{align*}
Hence taking the supremum over all $(a, x^*) \in \underline{d} f(x)$ and then repeating the same argument with $x$ and
$y$ swapped one obtains that
\[
  d_{PH}(\underline{d} f(x), \underline{d} f(y)) \le 2 \Big( \sup_{x^* \in \partial f(0)} \| x^* \| \Big) \| x - y \|.
\]
Thus, $f$ is continuously hypodifferentiable and the set-valued map \eqref{eq:SublinearFuncHypodiff} is its consistent
continuous hypodifferential map on $X$.
\end{example}

\begin{example}[Norm of an affine map]
Suppose that $Y$ is a real Banach space, $A \colon X \to Y$ is a bounded linear operator, and $f(x) = \| Ax + b \|$ for
some $b \in Y$. Then by the previous example and Proposition~\ref{prp:AffineShiftHypodiff} the function $f$ is
continuously hypodifferentiable on $X$ and the set-valued map
\[
  \underline{d} f(x) 
  = \Big\{ \Big( \langle x^*, A x + b \rangle - \| A x + b \|, A^* x^* \Big) \in \mathbb{R} \times X^*
  \Bigm| \| x^* \| \le 1 \Big\} \quad \forall x \in X
\]
is its consistent continuous hypodifferential mapping on $X$. Moreover, this hypodifferential map is global, and
arguing as in the previous example one can readily check that
\[
  d_{PH}\big( \underline{d} f(x), \underline{d} f(y) \big) \le 2 \| A \| \| x - y \| \quad \forall x, y \in X.
\]
\end{example}

\begin{example}[Maximal eigenvalue]
Let $X$ be the space $\mathbb{S}^{\ell}$ of all real symmetric matrices of order $\ell$ equipped with the inner product
$\langle A, B \rangle = \trace(AB)$ and the corresponding norm (here $\trace(A)$ is the trace of a square matrix $A$).
Let $f(A) = \lambda_{\max}(A)$, $A \in \mathbb{S}^{\ell}$, be the maximal eigenvalue function. Clearly, $f$ is a
closed
sublinear function. Taking into the well-known representation
\[
  \lambda_{\max}(A) = \max_{v \in \mathbb{B}_{\ell}} \langle v, A v \rangle \quad \forall A \in \mathbb{S}^{\ell}
\]
one can readily check that $\partial f(0) = \{ v^T v \mid v \in \mathbb{B}_{\ell} \}$, where $\mathbb{B}_{\ell}$ is the
unit ball in $\mathbb{R}^{\ell}$. Therefore, by Example~\ref{ex:SublinearFunction} the function $f$ is continuously
hypodifferentiable on $\mathbb{S}^{\ell}$ and the set-valued map
\[
  \underline{d} f(A) = \Big\{ (\langle v, A v \rangle - \lambda_{\max}(A), v^T v) 
  \in \mathbb{R} \times \mathbb{S}^{\ell} \Bigm| v \in \mathbb{B}_{\ell} \Big\}
  \quad \forall A \in \mathbb{S}^{\ell}
\]
is its consistent continuous hypodifferential mapping on $X$ that is, furthermore, global. In addition, one can easily
check that
\[
  d_{PH}\big( \underline{d} f(A), \underline{d} f(B) \big) \le 2 \| A - B \| \quad 
  \forall A, B \in \mathbb{S}^{\ell}.
\]
\end{example}

\begin{example}[Distance to a cone]
Let $K \subset X$ be a closed convex cone and $f(x) = \dist(x, K)$ for all $x \in X$. Bearing in mind the fact that $K$
is a convex cone one can easily verify that $f$ is a closed sublinear function. By \cite[Example~2.130]{BonnansShapiro}
one has
\[
  \partial f(0) = \big\{ x^* \in K^* \bigm| \| x^* \| \le 1 \big\}, \quad
  K^* = \Big\{ x^* \in X^* \Bigm| \langle x^*, x \rangle \le 0 \: \forall x \in K \Big\}
\]
(here $K^*$ is the polar cone of $K$). Hence by Example~\ref{ex:SublinearFunction} the function $f$ is continuously
hypodifferentiable on $X$ and the set-valued map
\[
  \underline{d} f(x) = \Big\{ (\langle x^*, x \rangle - \dist(x, K), x^*) \in \mathbb{R} \times X^* \Bigm|
  x^* \in K^*, \: \| x^* \| \le 1 \Big\}
\]
is its global and consistent continuous hypodifferential mapping on $X$. Moreover, in this case
\[
  d_{PH}\big( \underline{d} f(x), \underline{d} f(y) \big) \le 2 \| x - y \|
  \quad \forall x, y \in X.
\]
\end{example}

\begin{remark}
Let us note that a consistent continuous hypodifferential map of the convex functional
\[
  F(u) = \int_{\Omega} f(u(x), \nabla u(x), x) \, dx \quad \forall u \in W^{1, p}(\Omega),
\]
defined on the Sobolev space $W^{1, p}(\Omega)$, can be computed with the use of
\cite[Theorem~5.1]{Dolgopolik_COCV} and \cite[Theorem~3.3]{Dolgopolik_COCV_2} under the assumptions that 
$f = f(u, \xi, x)$ is convex in $(u, \xi)$ for a.e. $x \in \Omega$ and the function $f$ along with its hypodifferential
mapping satisfy some natural growth conditions.
\end{remark}

\subsection{Lipschitzian approximations and Lipschitz continuous hypodifferentials}
\label{subsect:LipschitzProperty}

The Lipschitz continuity of the gradient of the objective function plays a fundamental role in the design and analysis
of numerical methods of convex optimization \cite{Nesterov_book,Bubeck,DvurechenskyShternStaudigl,Teboulle}. In the case
when $X$ is a Hilbert space and $f$ is differentiable, the Lipschitz continuity of the gradient of $f$ on a convex set
$Q$ implies that
\[
  \Big| f(y) - f(x) - \langle \nabla f(x), y - x \rangle \Big| \le \frac{L}{2} \| y - x \|^2
  \quad \forall x, y \in Q
\]
(see, e.g. \cite[Lemma~1.2.3]{Nesterov_book}). This inequality is sometimes sufficient by itself for convergence
analysis of optimization methods and is often exploited, for example, in the context of weakly convex optimization
(see, e.g. \cite{AtenasSagastizabal,DanilovaDvurechnsky} and the references therein). In
\cite{Dolgopolik_HypodiffDescent}, a natural extension of this inequality to the case of nonsmooth hypodifferentiable
convex functions was proposed and studied. 

\begin{definition} \label{def:LipApprox}
Let $Q \subset \interior \dom f$ be a nonempty set and $\underline{d} f$ be a hypodifferential mapping of $f$ on $Q$.
This mapping is called a \textit{Lipschitzian approximation} of $f$ on $Q$ with Lipschitz constant $L > 0$, if
\begin{equation} \label{eq:LipApprox}
  \Big| f(y) - f(x) - \max_{(a, x^*) \in \underline{d} f(x)} \big( a + \langle x^*, y - x \rangle \big) \Big|
  \le \frac{L}{2} \| x - y \|^2
\end{equation}
for all $x, y \in Q$.
\end{definition}

Observe that if a hypodifferential mapping $\underline{d} f(\cdot)$ is consistent on $Q$, then it is a Lipschitzian
approximation of $f$ on $Q$ with Lipschitz constant $L > 0$ if and only if
\[
  f(y) \le 
  \varphi_x(y) := f(x) + \max_{(a, x^*) \in \underline{d} f(x)} \big( a + \langle x^*, y - x \rangle \big)
  + \frac{L}{2} \| y - x \|^2
\]
for all $x, y \in Q$ (that is, the function $\varphi_x(\cdot)$ is a majorant of $f$ on $Q$ for any $x \in Q$).

One can also consider the property of the Lipschitz continuity of hypodifferential, which in the convex setting can be
viewed as a natural extension of the Lipschitz continuity of the gradient to the nonsmooth case.

\begin{definition}
Let $Q \subset \interior \dom f$ be a nonempty set and $\underline{d} f$ be a hypodifferential mapping of $f$ on $Q$.
This mapping is said to be \textit{Lipschitz continuous} on $Q$ with Lipschitz constant $L > 0$, if
\[
  d_{PH}\big( \underline{d} f(x), \underline{d} f(y) \big) \le L \| x - y \| \quad \forall x, y \in Q
\]
or, equivalently,
\[
  \underline{d} f(y) \subseteq \underline{d} f(x) + L \| y - x \| \mathbb{B}_{\mathbb{R} \times X^*}
  \quad \forall x, y \in Q,
\]
where $\mathbb{B}_{\mathbb{R} \times X^*}$ is the unit ball in $\mathbb{R} \times X^*$. If a hypodifferential map
$\underline{d} f$ is both Lipschitz continuous on $Q$ and a Lipschitzian approximation of $f$ on $Q$, then it is said
to have \textit{the Lipschitz property} on $Q$.
\end{definition}

The Lipschitz continuity of a hypodifferential map and the property of it being a Lipschitzian approximation seem to be
closely related, as we will show below. However, it is unclear whether these properties are equivalent in the general
nonsmooth case. We leave the difficult question of whether these properties are equivalent (at least under some
additional assumptions) as an interesting problem for future research. Here we only prove a partial result showing that
a certain upper estimate of the function
\[
  f(y) - f(x) - \max_{(a, x^*) \in \underline{d} f(x)} (a + \langle x^*, y - x \rangle)
\]
holds true for any Lipschitz continuous hypodifferential map $\underline{d} f(\cdot)$.

\begin{proposition} \label{prp:LipContinHypodiff_Majorant}
Let $\underline{d} f$ be a consistent hypodifferential map of $f$ that is defined and Lipschitz continuous with
Lipschitz constant $L > 0$ on a convex set $Q \subseteq \interior \dom f$. Then for any $x, y \in Q$ one has
\[
  0 \le f(y) - f(x) - \max_{(a, x^*) \in \underline{d} f(x)} (a + \langle x^*, y - x \rangle)
  \le L \| y - x \| \big( 1 + \| y - x \| \big).
\]
\end{proposition}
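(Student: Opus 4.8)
The lower bound is nothing more than a restatement of consistency: by the equivalent form of consistency recorded in the remark following the definition (equivalently, by \eqref{eq:ConsistentHypodiff}), one has $f(y) - f(x) \ge \max_{(a,x^*) \in \underline{d} f(x)}(a + \langle x^*, y-x\rangle)$ for all $x, y \in Q$, which is precisely the left inequality. Thus the entire content of the proposition is the upper bound, and my plan focuses on it.

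The guiding idea is to produce an affine upper estimate of $f(y) - f(x)$ from the hypodifferential at $y$, and then transport it to a nearby affine piece of the hypodifferential at $x$ using Lipschitz continuity. First I would use that $\max\{ b \mid (b, y^*) \in \underline{d} f(y) \} = 0$ with the maximum attained (weak${}^*$ compactness of $\underline{d} f(y)$), so that I may select a point of the form $(0, y^*) \in \underline{d} f(y)$. Applying consistency \eqref{eq:ConsistentHypodiff} with base point $y$ and variable point $x$ then gives $f(x) - f(y) \ge \langle y^*, x - y \rangle$, that is, $f(y) - f(x) \le \langle y^*, y - x \rangle$. The crucial point is the choice $b = 0$: an arbitrary element with $b < 0$ would introduce an uncontrolled term $-b$ into the estimate, whereas Lipschitz continuity only controls \emph{differences} of the first coordinates, not their magnitude.

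Next I would invoke the Lipschitz continuity of $\underline{d} f$. For arbitrary $\varepsilon > 0$, since $d_{PH}(\underline{d} f(x), \underline{d} f(y)) \le L \| x - y \| < L \| x - y \| + \varepsilon$, Prp.~\ref{prp:PHdist} supplies $\eta < L \| x - y \| + \varepsilon$ and, applying its covering property to the selected point $(0, y^*)$, a pair $(a, x^*) \in \underline{d} f(x)$ with $|a| + \| x^* - y^* \| \le \eta$. Since $a + \langle x^*, y - x \rangle \le \max_{(a,x^*) \in \underline{d} f(x)}(a + \langle x^*, y - x \rangle)$, combining this with the estimate of the previous step yields
\[
  f(y) - f(x) - \max_{(a,x^*) \in \underline{d} f(x)} (a + \langle x^*, y - x \rangle) \le - a + \langle y^* - x^*, y - x \rangle.
\]
Bounding $-a \le |a| \le \eta$ and $\langle y^* - x^*, y - x \rangle \le \| y^* - x^* \| \, \| y - x \| \le \eta \| y - x \|$, I obtain the majorant $\eta (1 + \| y - x \|) \le (L \| x - y \| + \varepsilon)(1 + \| y - x \|)$; letting $\varepsilon \to 0$ produces exactly $L \| y - x \|(1 + \| y - x \|)$.

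The argument has no deep obstacle once the correct point is selected; the two things to be careful about are (i) choosing $b = 0$ rather than an arbitrary element of $\underline{d} f(y)$, so that the first-coordinate contribution stays controlled by the Lipschitz constant, and (ii) the fact that the distance realizing the Pompeiu--Hausdorff estimate need not be attained in the norm, the values of $\underline{d} f$ being only weak${}^*$ compact. This is why I would pass to the covering characterization of Prp.~\ref{prp:PHdist} with a vanishing slack $\varepsilon$ rather than asserting the existence of an exact nearest point in $\underline{d} f(x)$.
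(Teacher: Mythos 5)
Your proof is correct. It differs from the paper's argument in one essential step: where you derive the affine upper estimate $f(y) - f(x) \le \langle y^*, y - x \rangle$ from the consistency inequality applied at the endpoint $y$ with a selected element $(0, y^*) \in \underline{d} f(y)$ (such an element exists by Definition~\ref{def:Hypodifferential} and weak${}^*$ compactness), the paper instead invokes a mean value theorem for hypodifferentiable convex functions (Prp.~2 of the cited codifferential-descent paper), which produces an intermediate point $z \in \co\{x, y\}$ and $(0, x_z^*) \in \underline{d} f(z)$ with the \emph{exact} identity $f(y) - f(x) = \langle x_z^*, y - x \rangle$; the transport to $\underline{d} f(x)$ then uses $d_{PH}(\underline{d} f(x), \underline{d} f(z)) \le L\|z - x\| \le L\|y - x\|$. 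From there both arguments coincide: an $\varepsilon$-approximate nearest point via Prp.~\ref{prp:PHdist}, the splitting $-a + \langle y^* - x^*, y - x\rangle \le \eta(1 + \|y - x\|)$ under the sum norm on $\mathbb{R} \times X^*$, and the limit $\varepsilon \to +0$. Your route is slightly more self-contained, since it uses only the consistency hypothesis already present in the statement rather than an external mean value theorem, at the cost of replacing an equality by an inequality (which is all that is needed for the one-sided upper bound, the lower bound being exactly consistency). Your two cautionary remarks — that one must select the element with first coordinate $0$ rather than an arbitrary $(b, y^*)$ with $b < 0$, and that the Pompeiu--Hausdorff distance need not be attained so one must work with the covering characterization and a vanishing slack — are both genuine and correctly handled; the paper deals with the second point in exactly the same way.
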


\begin{proof}
Fix any $x, y \in Q$ and $\varepsilon > 0$. By \cite[Proposition~2]{Dolgopolik_CodiffDescent} there exists 
$z \in \co\{ x, y \}$ and $(0, x_z^*) \in \underline{d} f(z)$ such that $f(y) - f(x) = \langle x_z^*, y - x \rangle$. 
By our assumption $d_{PH}(\underline{d} f(x), \underline{d} f(z)) \le L \| z - x \|$. Therefore, by
Proposition~\ref{prp:PHdist} there exists $(a_0, x_0^*) \in \underline{d} f(x)$ such that 
$\| (0, x_z^*) - (a_0, x_0^*) \| \le L \| z - x \| + \varepsilon$. Hence
\begin{align*}
  f(y) &- f(x) - \max_{(a, x^*) \in \underline{d} f(x)} (a + \langle x^*, y - x \rangle)
  \\
  &= \langle x_z^*, y - x \rangle - \max_{(a, x^*) \in \underline{d} f(x)} (a + \langle x^*, y - x \rangle)
  \\
  &\le \langle x_z^*, y - x \rangle - a_0 - \langle x_0^*, y - x \rangle
  \le \big\| (0, x_z^*) - (a_0, x_0^*) \big\| \big( 1 + \| y - x \| \big)
  \\
  &\le \Big( L \| z - x \| + \varepsilon \Big) \big( 1 + \| y - x \| \big).
\end{align*}
It remains to pass to the limit as $\varepsilon \to + 0$ and note that $\| z - x \| \le \| y - x \|$, since 
$z \in \co\{ x, y \}$.
\end{proof}

Let us now turn to the analysis of existence of hypodifferentials of nonsmooth convex functions having the Lipschitz
property. We start with the simplest case of global hypodifferentials. Namely, let us show that global hypodifferentials
maps are, in a sense, always globally Lipschitz continuous (cf. examples in Subsection~\ref{subsect:Examples}). We need
an
auxiliary definition to precisely formulate this result.

\begin{definition}
Let a hypodifferential map $\underline{d} f$ of $f$ be defined on an open set $Q \subseteq \dom f$. This map is said to
be exact on $Q$, if
\[
  f(y) - f(x) = \max_{(a, x^*) \in \underline{d} f(x)} (a + \langle x^*, y - x \rangle)
  \quad \forall y, x \in Q.
\]
\end{definition}

\begin{proposition} \label{prp:GlobalHypodiff_LipContin}
If there exists a hypodifferential map of $f$ that is defined and exact on an open convex set 
$Q \subseteq \interior \dom f$, then $f$ is Lipschitz continuous on $Q$ and there exists an exact hypodifferential map
of $f$ that is defined and has the Lipschitz property on $Q$.
\end{proposition}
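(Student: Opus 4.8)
The plan is to first extract a \emph{global} affine representation of $f$ on $Q$ from the exactness hypothesis at a single point, and then to reconstruct from that representation a canonical exact hypodifferential whose Lipschitz continuity can be read off directly. The Lipschitzian-approximation half of the ``Lipschitz property'' will come for free, since for an exact map the quantity estimated in \eqref{eq:LipApprox} vanishes identically.

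First I would fix an arbitrary $x_0 \in Q$ and set $C = \underline{d} f(x_0)$, which is weak${}^*$ compact. Exactness at $x_0$ gives, for every $y \in Q$,
\[
  f(y) = f(x_0) + \max_{(a, x^*) \in C} \big( a + \langle x^*, y - x_0 \rangle \big)
  = \max_{(b, x^*) \in \widetilde{C}} \big( b + \langle x^*, y \rangle \big),
\]
where $\widetilde{C} = \{ (f(x_0) + a - \langle x^*, x_0 \rangle, x^*) \mid (a, x^*) \in C \}$ is the image of $C$ under the affine map $(a, x^*) \mapsto (f(x_0) + a - \langle x^*, x_0 \rangle, x^*)$. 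This map is continuous from the weak${}^*$ topology into itself (evaluation $x^* \mapsto \langle x^*, x_0 \rangle$ is weak${}^*$ continuous), so $\widetilde{C}$ is weak${}^*$ compact, hence norm bounded. Writing $L := \sup\{ \| x^* \| \mid (b, x^*) \in \widetilde{C} \} < \infty$, the displayed identity exhibits $f$ on $Q$ as a pointwise supremum of $L$-Lipschitz affine functions, which immediately yields the Lipschitz continuity of $f$ on $Q$ with constant $L$; this is the first assertion.

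Next I would define the candidate map
\[
  \underline{d} \widehat{f}(x) := \big\{ \big( b + \langle x^*, x \rangle - f(x), \, x^* \big) \bigm| (b, x^*) \in \widetilde{C} \big\}, \quad x \in Q,
\]
which is again a weak${}^*$-continuous affine image of the weak${}^*$ compact set $\widetilde{C}$ and therefore weak${}^*$ compact. Using the representation of $f$ at both $y$ and $x$ one checks that $\max\{ a \mid (a, x^*) \in \underline{d} \widehat{f}(x) \} = f(x) - f(x) = 0$ and that $\max_{(a, x^*) \in \underline{d}\widehat{f}(x)}(a + \langle x^*, y - x \rangle) = f(y) - f(x)$ for all $x, y \in Q$, so that $\underline{d}\widehat{f}$ is a genuine hypodifferential map by Thm.~\ref{thrm:HypodiffCharacterization} and is \emph{exact} on $Q$. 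Exactness makes the left-hand side of \eqref{eq:LipApprox} identically zero, so $\underline{d}\widehat{f}$ is automatically a Lipschitzian approximation of $f$ for any positive constant.

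It then remains to verify Lipschitz continuity of $\underline{d}\widehat{f}$. Given $x, y \in Q$ and a point of $\underline{d}\widehat{f}(x)$ coming from $(b, x^*) \in \widetilde{C}$, I would pair it with the point of $\underline{d}\widehat{f}(y)$ arising from the \emph{same} $(b, x^*)$; the two share the slope $x^*$, and their scalar parts differ by $\langle x^*, x - y \rangle + f(y) - f(x)$, whose modulus is at most $\| x^* \| \, \| x - y \| + |f(y) - f(x)| \le 2 L \| x - y \|$. By symmetry and Prp.~\ref{prp:PHdist} this gives $d_{PH}(\underline{d}\widehat{f}(x), \underline{d}\widehat{f}(y)) \le 2 L \| x - y \|$, so $\underline{d}\widehat{f}$ has the Lipschitz property on $Q$. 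The only point requiring genuine care is the bookkeeping that keeps all reconstructed sets weak${}^*$ compact in the possibly nonreflexive setting; this reduces entirely to the weak${}^*$-to-weak${}^*$ continuity of the parametrizing affine maps and to the equivalence of weak${}^*$ compactness with norm boundedness plus weak${}^*$ closedness, after which the Lipschitz estimate is an elementary matching of affine pieces.
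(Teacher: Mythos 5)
Your proposal is correct and follows essentially the same route as the paper: both fix a single base point $x_0$, transport its hypodifferential to every $x \in Q$ by the affine reparametrization $(a, x^*) \mapsto (a + \langle x^*, x - x_0\rangle + f(x_0) - f(x), x^*)$ (your set $\underline{d}\widehat{f}(x)$ coincides with the paper's $D_f(x)$ after substituting the definition of $\widetilde{C}$), and prove Lipschitz continuity of the resulting map by matching the images of the same element of $\underline{d} f(x_0)$, yielding the identical bound $2L\|x - y\|$. The only cosmetic differences are your intermediate normalization through $\widetilde{C}$ and deriving the Lipschitz continuity of $f$ directly from the max-of-affine representation rather than via Prp.~\ref{prp:BoundedHypodiff_LipContin}.
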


\begin{proof}
Let us define a new hypodifferential map $D_f(\cdot)$ of $f$ on $Q$. To this end, fix any point $x_0 \in X$ and choose
any $x \in Q$ and $\Delta x \in X$ such that $x + \Delta x \in Q$ (recall that $Q$ is an open set). By applying the
exactness property of the hypodifferential one gets
\begin{align*}
  f(x + \Delta x) - f(x) &= f(x_0) - f(x) 
  + \max_{(a, x^*) \in \underline{d} f(x_0)} (a + \langle x^*, x + \Delta x - x_0 \rangle)
  \\
  &= \max_{(a, x^*) \in D_f(x)} (a + \langle x^*, \Delta x \rangle),
\end{align*}
where
\[
  D_f(x) = \Big\{ (a + \langle x^*, x - x_0 \rangle + f(x_0) - f(x), x^*) \in \mathbb{R} \times X^* \Bigm|
  (a, x^*) \in \underline{d} f(x_0) \Big\}.
\]
Note that
\begin{align*}
  \max_{(a, x^*) \in D_f(x)} a 
  &= \max_{(a, x^*) \in \underline{d} f(x_0)} \big( a + \langle x^*, x - x_0 \rangle \big) + f(x_0) - f(x) 
  \\
  &= f(x) - f(x_0) + f(x_0) - f(x) = 0.
\end{align*}
Furthermore, the set $D_f(x)$ is convex and weak${}^*$ compact, since $D_f(x)$ is the image of the convex weak${}^*$
compact set $\underline{d} f(x_0)$ under the affine map
\[
  \mathcal{T}_x(a, x^*) = (a + \langle x^*, x - x_0 \rangle + f(x_0) - f(x), x^*)
\]
that is obviously continuous with respect to the weak${}^*$ topology. Thus, $D_f(x)$ is a hypodifferential of $f$ at
$x$. Clearly, $D_f(\cdot)$ is an exact hypodifferential map of $f$ on $Q$, which implies that $D_f(\cdot)$ is a
Lipschitzian approximation of $f$ on $Q$. Let us check that this hypodifferential map is also Lipschitz continuous 
on $Q$.

First, note that the quantity $L := \sup\{ \| x^* \| \mid (a, x^*) \in D_f(x) \}$ does not depend on $x$
by the definition of $D_f(\cdot)$. Therefore by Proposition~\ref{prp:BoundedHypodiff_LipContin} the function $f$ is
Lipschitz
continuous on $Q$ with Lipschitz constant $L$.

Fix any $x, y \in Q$. Let $(b, y^*) \in D_f(y)$. Then there exists 
$(a_0, x_0^*) \in \underline{d} f(x_0)$ such that $(b, y^*) = \mathcal{T}_y(a_0, x_0^*)$. Let 
$(a, x^*) = \mathcal{T}_x(a_0, x_0^*)$. Then $(a, x^*) \in D_f(x)$ and
\begin{align*}
  \dist( (b, y^*), D_f(x) ) \le \| (b, y^*) - (a, x^*) \| &\le | \langle x_0^*, y - x \rangle | + |f(y) - f(x)|
  \\
  &\le L \| y - x \| + L \| y - x \|.
\end{align*}
Consequently, one has
\[
  \sup_{(b, y^*) \in D_f(y)} \dist\big( (b, y^*), D_f(x) \big) \le 2 L \| y - x \|.
\]
Now, repeating the same argument with $x$ and $y$ swapped one can conclude that 
$d_{PH}(D_f(x), D_f(y)) \le 2 L \| y - x \|$. Thus, the hypodifferential map $D_f(\cdot)$ is Lipschitz continuous 
on $Q$.
\end{proof}

Nonetheless, as one can expect, not every convex function has a globally Lipschitz continuous hypodifferential mapping.

\begin{proposition}
Let $\dom f = X$ and there exist a hypodifferential map of $f$ defined on $X$ and globally Lipschitz continuous. Then
there exists $C_1, C_2 \ge 0$ such that $f(x) \le C_1 + C_2 \| x \|^2$ for all $x \in X$.
\end{proposition}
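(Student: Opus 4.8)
The plan is to convert the set-valued Lipschitz continuity of $\underline{d} f$ into a linear-in-$\|x\|$ bound on a single subgradient of $f$, and then feed that subgradient into the subgradient inequality. Fix the origin $0$ as a reference point and let $\underline{d} f$ be the given globally Lipschitz continuous hypodifferential map with Lipschitz constant $L > 0$. Since $\underline{d} f(0) \subset \mathbb{R} \times X^*$ is weak${}^*$ compact, it is norm bounded (recall that weak${}^*$ compact subsets of $\mathbb{R} \times X^*$ are precisely the norm bounded weak${}^*$ closed ones), so the quantity $R := \sup\{ \| x^* \| \mid (a, x^*) \in \underline{d} f(0) \}$ is finite.

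The first and key step is to bound the subgradients of $f$ at an arbitrary point $x \in X$. Because $\dom f = X$, one has $\interior \dom f = X$, so $f$ is hypodifferentiable at $x$ by Thm.~\ref{thrm:HypodiffCharacterization}, and in particular $\max_{(a, x^*) \in \underline{d} f(x)} a = 0$ with the maximum attained by weak${}^*$ compactness. Hence there exists $(0, x^*) \in \underline{d} f(x)$, and by the characterization of hypodifferentials in Thm.~\ref{thrm:HypodiffCharacterization} this $x^*$ lies in $\partial f(x)$. Applying the equivalent inclusion form of Lipschitz continuity with the pair $(x, 0)$, namely $\underline{d} f(x) \subseteq \underline{d} f(0) + L \| x \| \mathbb{B}_{\mathbb{R} \times X^*}$, I would write $(0, x^*) = (a_0, x_0^*) + (e_0, e^*)$ with $(a_0, x_0^*) \in \underline{d} f(0)$ and $\| (e_0, e^*) \| \le L \| x \|$; the triangle inequality then gives $\| x^* \| \le \| x_0^* \| + \| e^* \| \le R + L \| x \|$.

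The second step is the convexity estimate. Since $f$ is a closed convex function with $\interior \dom f = X$, it is continuous on $X$, and the subgradient inequality for the chosen $x^* \in \partial f(x)$ evaluated at the point $0$ reads $f(0) \ge f(x) + \langle x^*, 0 - x \rangle$, that is, $f(x) \le f(0) + \langle x^*, x \rangle \le f(0) + \| x^* \| \, \| x \|$. Substituting the bound on $\| x^* \|$ from the previous step yields $f(x) \le f(0) + R \| x \| + L \| x \|^2$. It then remains to absorb the linear term: using $R \| x \| \le \tfrac{R}{2}(1 + \| x \|^2)$, I get $f(x) \le \big( f(0) + \tfrac{R}{2} \big) + \big( \tfrac{R}{2} + L \big) \| x \|^2$, so one may take $C_2 = \tfrac{R}{2} + L \ge 0$ and $C_1 = \max\{ 0, f(0) + \tfrac{R}{2} \} \ge 0$ (enlarging $C_1$ to be nonnegative only strengthens the inequality).

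The main obstacle is precisely the passage in the first step from a Pompeiu--Hausdorff estimate on the sets $\underline{d} f(x)$ to a scalar growth bound on an individual subgradient; this relies on the identification $\partial f(x) = \{ x^* \mid (0, x^*) \in \underline{d} f(x) \}$ provided by Thm.~\ref{thrm:HypodiffCharacterization} and on the norm boundedness of the weak${}^*$ compact set $\underline{d} f(0)$. Once these are in place, the remainder is an elementary use of the subgradient inequality, and in particular no integration along the segment $[0, x]$ and no directional-derivative formula are required.
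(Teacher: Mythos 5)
Your proof is correct, but it takes a genuinely different route from the paper's. The paper fixes a single reference point $x_0$ and invokes Prp.~\ref{prp:LipContinHypodiff_Majorant}, which gives the one-sided estimate $f(x) \le f(x_0) + \max_{(a,x^*) \in \underline{d} f(x_0)}(a + \langle x^*, x - x_0\rangle) + L\|x - x_0\|(1 + \|x - x_0\|)$; it then bounds the max term by $\theta\|x - x_0\|$ using $a \le 0$ and the boundedness of $\underline{d} f(x_0)$, and absorbs the linear terms. You instead work at the \emph{variable} point: you extract a subgradient $x^* \in \partial f(x)$ from the slice $\{x^* \mid (0,x^*) \in \underline{d} f(x)\}$ via Thm.~\ref{thrm:HypodiffCharacterization}, bound $\|x^*\| \le R + L\|x\|$ by the inclusion form of Hausdorff--Lipschitz continuity against $\underline{d} f(0)$, and then apply the plain subgradient inequality evaluated at $0$. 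Both arguments yield the quadratic growth bound, but yours has a notable advantage: Prp.~\ref{prp:LipContinHypodiff_Majorant} is stated (and proved, via a mean-value result) only for \emph{consistent} hypodifferential maps, whereas the proposition's hypothesis does not assume consistency; your argument uses only the characterization theorem and ordinary convexity, so it establishes the statement exactly as written. The only point to be slightly careful about is the exact decomposition $(0,x^*) = (a_0, x_0^*) + (e_0, e^*)$ with $\|(e_0,e^*)\| \le L\|x\|$, which requires the infimum in $\dist((0,x^*), \underline{d} f(0))$ to be attained (or an $\varepsilon$-perturbation argument followed by $\varepsilon \to 0$); since the paper's definition of Lipschitz continuity already asserts the inclusion form as equivalent, and the weak${}^*$ lower semicontinuity of the norm on the weak${}^*$ compact set $\underline{d} f(0)$ guarantees attainment anyway, this is not a gap.
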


\begin{proof}
Fix any $x_0 \in X$ and denote $\theta = \sup\{ \| x^* \| \mid (a, x^*) \in \underline{d} f(x_0) \}$. Let also $L$ be a
Lipschitz constant from the definition of Lipschitz continuous hypodifferential map. By
Proposition~\ref{prp:LipContinHypodiff_Majorant} one has
\[
  f(x) \le f(x_0) + \max_{(a, x^*) \in \underline{d} f(x_0)} (a + \langle x^*, x - x_0 \rangle)
  + L \| x - x_0 \| \big( 1 + \| x - x_0 \| \big)
\]
for any $x \in X$. Taking into account the fact that $a \le 0$ for any $(a, x^*) \in \underline{d} f(x_0)$ and applying
the obvious inequality $(a + b)^2 \le 2 a^2 + 2 b^2$ one gets that
\begin{align*}
  f(x) &\le f(x_0) + (\theta + L) \| x - x_0 \| + L \| x - x_0 \|^2
  \\
  &\le f(x_0) + (\theta + L) \| x_0 \| + 2 L \| x_0 \|^2 + (\theta + L) \| x \| + 2 L \| x \|^2.
\end{align*}
for any $x \in X$. Hence with the use of the inequality $\| x \| \le 0.5 + 0.5 \| x \|^2$ one obtains that
$f(x) \le C_1 + C^2 \| x \|^2$ with
\[
  C_1 = f(x_0) + (\theta + L) \| x_0 \| + 2 L \| x_0 \|^2 + \frac{1}{2}(\theta + L), \quad
  C_2 = 2 L + \frac{1}{2}(\theta + L),
\]
and the proof is complete.
\end{proof}

\begin{remark}
Note that if there exists an exact hypodifferential map of $f$ on $X$, then with the use of the equality
\[
  f(x) - f(x_0) = \max_{(a, x^*) \in \underline{d} f(x_0)} (a + \langle x^*, x - x_0 \rangle) \quad \forall x \in X
\]
one gets that there exist $C_1, C_2 \ge 0$ such that $|f(x)| \le C_1 + C_2 \| x \|$ for all $x \in X$.
\end{remark}

Thus, for the convex function $f(x) = \| x \|^p$ with $p > 2$ there does not exist a globally Lipschitz continuous
hypodifferential map (or a hypodifferential map that has the Lipschitz property on the entire space). However, in the
case when the space $X$ is reflexive, a consistent hypodifferential map of a real-valued convex function that has 
the Lipschitz property locally \textit{always} exists. Moreover, under some mild assumptions there even exists a
consistent hypodifferential map that has the Lipschitz property on any given bounded set. The proof of this result
is inspired by the discussion from \cite[Appendix~I]{DemyanovRubinov}.

\begin{theorem} \label{thrm:HypodiffExistence}
Let $X$ be a relfexive Banach space, $Q \subseteq \interior \dom f$ be a bounded open set, and $f$ be Lipschitz
continuous on $Q$. Then there exists a hypodifferential mapping of $f$ that is defined, consistent, exact, and 
has the Lipschitz property on $Q$. In particular, the following statements hold true:
\begin{enumerate}
\item{for any $x \in \interior \dom f$ there exists a neighbourhood $\mathcal{U}_x$ of $x$ and a hypodifferential
mapping of $f$ that is defined, consistent, exact, and has the Lipschitz property on $\mathcal{U}_x$;}

\item{for any bounded open set $Q \subseteq \interior \dom f$ such that the subdifferential map $\partial f(\cdot)$ is
bounded on $Q$, there exists a hypodifferential mapping of $f$ that is defined, consistent, exact, and 
has the Lipschitz property on $Q$.
}
\end{enumerate}
\end{theorem}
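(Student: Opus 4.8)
The plan is to build a single ``master'' family of global affine minorants of $f$ and to realise the desired hypodifferential as its affine image at each point, in direct analogy with the set $D_f(\cdot)$ constructed in the proof of Prp.~\ref{prp:GlobalHypodiff_LipContin}. First I would let $L$ be a Lipschitz constant of $f$ on $Q$, note that $Q \subseteq \interior\dom f$ guarantees $\partial f(z) \ne \emptyset$ for every $z \in Q$ (by \cite[Crlr.~I.2.5, Prp.~I.5.2]{EkelandTemam}), and observe that openness of $Q$ forces $\| x^* \| \le L$ for every $x^* \in \partial f(z)$, $z \in Q$. I would then set
\[
  \widetilde{M} = \Big\{ \big( f(z) - \langle x^*, z \rangle, x^* \big) \in \mathbb{R} \times X^* \Bigm| z \in Q, \ x^* \in \partial f(z) \Big\}, \qquad M = \cl^* \widetilde{M}.
\]
Because $Q$ is bounded and $f$ is Lipschitz (hence bounded) on it, both components of the elements of $\widetilde M$ are bounded, so $M$ is norm bounded and weak${}^*$ closed, hence weak${}^*$ compact by reflexivity of $\mathbb{R}\times X$. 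The hypodifferential map is then defined by $\underline{d} f(x) = \mathcal{T}_x(M)$, where $\mathcal{T}_x(c, x^*) = (c + \langle x^*, x \rangle - f(x), x^*)$ is affine and weak${}^*$-to-weak${}^*$ continuous; thus each $\underline{d} f(x)$ is weak${}^*$ compact.

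The verification that this map is an exact, consistent hypodifferential rests on the fact that every $(c, x^*) \in \widetilde M$ is a \emph{global} affine minorant, i.e. $c + \langle x^*, w \rangle \le f(w)$ for all $w \in X$, with equality at $w = z$. Since this family of inequalities is weak${}^*$ closed in $(c, x^*)$, it persists for all $(c, x^*) \in M$. Writing $a = c + \langle x^*, x \rangle - f(x)$ for a generic element of $\underline{d} f(x)$, the minorant inequality at $w = x$ gives $a \le 0$, while the element generated by $z = x$ yields $a = 0$, so $\max_{(a,x^*) \in \underline{d} f(x)} a = 0$. For $y \in Q$ the supremum of $c + \langle x^*, y \rangle$ over $\widetilde M$ is attained at $z = y$ and equals $f(y)$; combined with the upper bound surviving the closure this gives $\max_{(a, x^*) \in \underline{d} f(x)}(a + \langle x^*, y - x \rangle) = f(y) - f(x)$ for all $x, y \in Q$, i.e. exactness. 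Exactness immediately yields both hypodifferentiability in the sense of Def.~\ref{def:Hypodifferential} (the expression under the limit vanishes for small $\alpha$, since $Q$ is open) and the Lipschitzian-approximation property for any constant. Consistency follows because the same minorant inequality gives $a + \langle x^*, y - x \rangle \le f(y) - f(x)$ for \emph{all} $y \in X$.

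It remains to establish Lipschitz continuity of $\underline{d} f(\cdot)$, and here fixing a single set $M$ pays off: for a common element $(c, x^*) \in M$, the images $\mathcal{T}_x(c, x^*)$ and $\mathcal{T}_{x'}(c, x^*)$ share the same $x^*$-component and their first components differ by $\langle x^*, x - x' \rangle + (f(x') - f(x))$, whose modulus is at most $2 L \| x - x' \|$. Hence every point of $\underline{d} f(x)$ lies within $2 L \| x - x' \|$ of $\underline{d} f(x')$ and vice versa, so $d_{PH}(\underline{d} f(x), \underline{d} f(x')) \le 2 L \| x - x' \|$; no closure estimate is needed because $\underline{d} f(x)$ and $\underline{d} f(x')$ are honest images of the \emph{same} compact set $M$. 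Together with the Lipschitzian-approximation property this gives the Lipschitz property on $Q$.

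Finally, the two particular statements reduce to the main claim. For (1), continuity of $f$ at $x \in \interior \dom f$ (again \cite[Crlr.~I.2.5]{EkelandTemam}) makes $f$ locally Lipschitz, so the result applies on a small bounded open ball $\mathcal{U}_x$. For (2), boundedness of $\partial f(\cdot)$ on $Q$ forces $f$ to be Lipschitz on $Q$: for $x, y \in Q$ and $x^* \in \partial f(x)$, $y^* \in \partial f(y)$ one has $\langle x^*, y - x \rangle \le f(y) - f(x) \le \langle y^*, y - x \rangle$, whence $|f(y) - f(x)| \le L \| y - x \|$ using only the endpoint subgradients (so no convexity of $Q$ is required). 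The step I expect to be most delicate is the topological bookkeeping around the weak${}^*$ closure — verifying simultaneously that passing to $M = \cl^* \widetilde M$ preserves the global-minorant inequalities (needed for the upper bound in exactness and for consistency) and does not spoil weak${}^*$ compactness — since the rest of the argument is the routine affine-image computation already seen in Prp.~\ref{prp:GlobalHypodiff_LipContin}.
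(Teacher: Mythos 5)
Your proof is correct and rests on the same underlying idea as the paper's: represent $f$ on $Q$ as the upper envelope of the affine minorants $w \mapsto f(z) + \langle x^*, w - z \rangle$ generated by subgradients at points $z \in Q$, and translate this family at each $x$ to obtain an exact, consistent hypodifferential. The differences are in the bookkeeping, and they work in your favour. The paper fixes a single-valued selection $V(\cdot)$ of $\partial f$, forms the pointwise convex hull $D(y)$, and sets $\underline{d} f(y) = \cl^* D(y)$; it must then argue that $d_{PH}(\cl^* D(x), \cl^* D(y)) = d_{PH}(D(x), D(y))$, which is where reflexivity enters (for convex sets the weak${}^*$ closure coincides with the norm closure, cf.\ Prp.~\ref{prp:WeakStarClosureContinuous}). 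You instead take the full subdifferential, close a single master set $M = \cl^* \widetilde{M}$ once, and define $\underline{d} f(x) = \mathcal{T}_x(M)$ as the image under a weak${}^*$-to-weak${}^*$ continuous affine map; each value is then weak${}^*$ compact automatically, and the Lipschitz estimate $d_{PH}(\underline{d} f(x), \underline{d} f(x')) \le 2L\|x - x'\|$ becomes an elementwise comparison over the common parameter $(c, x^*) \in M$, with no closure--Hausdorff-distance commutation needed. In fact your argument never uses reflexivity: norm-bounded weak${}^*$ closed subsets of $\mathbb{R} \times X^*$ are weak${}^*$ compact in any Banach space, and the bound $\|x^*\| \le L$ survives the weak${}^*$ closure by lower semicontinuity of the dual norm. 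Dropping the convex hull is harmless since Definition~\ref{def:Hypodifferential} does not require convex values, and the one genuinely delicate step you flagged --- persistence of the global minorant inequalities under $\cl^*$ --- is correctly handled by observing that $\{(c, x^*) \mid c + \langle x^*, w \rangle \le f(w)\}$ is weak${}^*$ closed for each fixed $w$. The reductions of the two particular statements are also sound (your endpoint-subgradient argument for part (2) even avoids the convexity of $Q$ that a segment argument would need).
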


\begin{proof}
For the sake of convenience, we divide the proof of the theorem into several parts. First, let us note that the
existence of a hypodifferential map having the required properties in a neighbourhood of any point 
$x \in \interior \dom f$ follows directly from \cite[Corollary~I.2.4]{EkelandTemam}. In turn, the last statement of the
theorem follows from the fact that if the subdifferential of $f$ is bounded on a set $Q$, then $f$ is Lipschitz
continuous on this set (see \cite[Theorem~2.4.13]{Zalinescu} and \cite[Theorem~24.7]{Rockafellar}). Let us now turn to 
the proof of the main statement of the theorem.

\textbf{1. The definition of hypodifferential.} By \cite[Corollary~I.2.5 and Proposition~I.5.2]{EkelandTemam} 
the function $f$ is subdifferentiable on $\interior \dom f$. Let $V(\cdot)$ be a selection of the restriction of 
the subdifferential mapping $\partial f(\cdot)$ to the set $Q$. By the definition of subdifferential
\begin{equation} \label{eq:SubgradientInequal}
  f(y) - f(x) \ge \langle V(x), y - x \rangle \quad \forall y, x \in Q.
\end{equation}
Note that this inequality turns into equality, if $y = x$. Therefore
\[
  f(y) = \max_{x \in Q} \big( f(x) + \langle V(x), y - x \rangle) \quad \forall y \in Q.
\]
Choose any $y \in Q$, $\Delta y \in X$ such that $y + \Delta y \in Q$. Then
\begin{equation} \label{eq:HypodiffDefViaSubgradIneq}
\begin{split}
  f(y + \Delta y) - f(y) &= \max_{x \in Q} \Big( f(x) - f(y) + \langle V(x), y + \Delta y - x \rangle \Big)
  \\
  &= \max_{(a, x^*) \in \underline{d} f(y)} (a + \langle x^*, \Delta y \rangle),
\end{split}
\end{equation}
where $\underline{d} f(y) = \cl^* D(y)$ and
\[
  D(y) = \co\Big\{ \big(f(x) - f(y) + \langle V(x), y - x \rangle, V(x) \big) \in \mathbb{R} \times X^* \Bigm|
  x \in Q \Big\}.
\]
Note that $\max\{ a \mid (a, x^*) \in D(y) \} = 0$ by inequality \eqref{eq:SubgradientInequal}, which obviously implies
that $\max\{ a \mid (a, x^*) \in \underline{d} f(y) \} = 0$. Let us check that the set $D(x)$ is bounded for any 
$x \in Q$. Then one can conclude that $\underline{d} f(y)$ is a weak${}^*$ compact set and, therefore, 
$\underline{d} f(y)$ is a hypodifferential of $f$ at $y$ that is obviously consistent and exact on $Q$ due to equality
\eqref{eq:HypodiffDefViaSubgradIneq}.

\textbf{2. Boundedness of hypodifferential.} Choose any $x \in Q$ and $(a, x^*) \in D(x)$. By definition one can find 
$n \in \mathbb{N}$, $z_i \in Q$, $i \in I := \{ 1, \ldots, n \}$, $V(z_i) \in \partial f(z_i)$, $i \in I$, 
and $\alpha_i \ge 0$, $i \in I$, such that
\begin{equation} \label{eq:ConstructiveExHypograd}
  (a, x^*) = \sum_{i = 1}^n \alpha_i \Big( f(z_i) - f(x) + \langle V(z_i), x - z_i \rangle, V(z_i) \Big), \quad
  \sum_{i = 1}^n \alpha_i = 1.
\end{equation}
By our assumptions the set $Q$ is bounded and $f$ is Lipschitz continuous on $Q$. Therefore, there exist $C_Q > 0$ and 
$C_f > 0$ such that $\| z \| \le C_Q$ and $|f(z)| \le C_f$ for all $z \in Q$. 

Let $L > 0$ be a Lipschitz constant of $f$ on $Q$. Then for any $z \in Q$ and $V \in \partial f(z)$ one has
\[
  \langle V, y - z \rangle \le f(y) - f(z) \le L \| y - z \| \quad \forall y \in Q.
\]
Since $Q$ is an open set, there exists $r > 0$ such that $B(z, r) \subset Q$. Hence with the use of the inequality
above one gets that
\[
  \langle V, y \rangle \le L \| y \| \quad \forall y \in B(0, r),
\]
which obviously implies that $\| V \| \le L$.

Bearing in mind equality \eqref{eq:ConstructiveExHypograd} one has
\begin{align*}
  \| (a, x^*) \| 
  &\le \sum_{i = 1}^n \alpha_i \Big( \big| f(z_i) - f(x) + \langle V(z_i), x - z_i \rangle \big| + \| V(z_i) \| \Big)
  \\	
  &\le \sum_{i = 1}^n \alpha_i \Big( 2 C_f + 2 L C_Q + L \Big) = 2 C_f + 2 L C_Q + L
\end{align*}
for any $(a, x^*) \in D(x)$. Thus, the set $D(x)$ is bounded.

\textbf{3. Lipschitz continuity of hypodifferential.} As was noted in the proof of
Proposition~\ref{prp:WeakStarClosureContinuous}, the reflexivity of $X$ implies that the set 
$\underline{d} f(x) = \cl^* D(x)$ coincides with the closure $\cl D(x)$ of $D(x)$ in the strong (norm) topology.
Moreover, one obviously has $d_{PH}(\cl D(x), \cl D(y)) = d_{PH}(D(x), D(y))$ for any $x, y \in Q$, which implies that
it is sufficient to show that the set-valued map $D(\cdot)$ is Lipschitz continuous on $Q$.

Fix any $x, y \in Q$. Let $(a, x^*) \in D(x)$. Then by definition there exist $n \in \mathbb{N}$, $z_i \in Q$,
$i \in I := \{ 1, \ldots, n \}$, $V(z_i) \in \partial f(z_i)$, $i \in I$, and $\alpha_i \ge 0$, $i \in I$, satisfying
\eqref{eq:ConstructiveExHypograd}. Denote
\[
  (b, y^*) = \sum_{i = 1}^n \alpha_i \Big( f(z_i) - f(y) + \langle V(z_i), y - z_i \rangle, V(z_i) \Big).
\]
Clearly, $(b, y^*) \in D(y)$. Therefore
\begin{align*}
  \dist( (a, x^*), D(y) ) &\le \| (a, x^*) - (b, y^*) \| 
  = \Big| \sum_{i = 1}^n \alpha_i(f(x) - f(y) + \langle V(z_i), x - y \rangle \Big|
  \\
  &= \Big| f(x) - f(y) + \langle \widehat{V}, x - y \rangle \Big| \le |f(x) - f(y)| + \| \widehat{V} \| \| x - y \|,
\end{align*}
where $\widehat{V} = \sum_{i \in I} \alpha_i V(z_i)$. Let $L > 0$ be a Lipschitz constant of $f$ on $Q$. Then, as was
noted above, $\| V \| \le L$ for any $V \in \partial f(z)$ and $z \in Q$. Consequently, one has
\[
  \dist( (a, x^*), D(y) ) \le 2 L \| x - y \| \quad \forall (a, x^*) \in D(x).
\]
Taking the supremum over all $(a, x^*) \in D(x)$ and then repeating the same argument with $x$ and $y$ swapped one
obtains that $d_{PH}( D(x), D(y) ) \le 2 L \| x - y \|$ for any $x, y \in Q$.
\end{proof}

\begin{corollary} \label{crlr:ExistenceThrm_FiniteDim}
Let $X$ be finite dimensional. Then for any bounded set $Q \subseteq \interior \dom f$ there exists a hypodifferential
mapping of $f$ that is defined, consistent, exact, and has the Lipschitz property on $Q$.
\end{corollary}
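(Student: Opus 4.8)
The plan is to reduce the corollary to Theorem~\ref{thrm:HypodiffExistence} (or to its second numbered consequence) by enclosing the bounded set $Q$ in a suitable open bounded set on which $f$ is Lipschitz continuous. Since $X$ is finite dimensional it is in particular reflexive, so the hypothesis on the underlying space in Theorem~\ref{thrm:HypodiffExistence} is automatically satisfied; the only things one must still supply are the \emph{openness} of the domain and the \emph{Lipschitz continuity} of $f$, neither of which is granted for a general bounded $Q$.

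To obtain both, I would invoke the classical fact that a finite convex function is locally Lipschitz on the interior of its effective domain (Rockafellar, see e.g.\ \cite[Thm.~10.4]{Rockafellar}), equivalently that the subdifferential map $\partial f(\cdot)$ is locally bounded on $\interior\dom f$. Because $X$ is finite dimensional and $Q$ is bounded, its closure $\cl Q$ is compact; provided $\cl Q\subseteq\interior\dom f$, one can cover $\cl Q$ by finitely many balls on each of which $f$ is Lipschitz and $\partial f$ is bounded, and set $\widetilde Q=\{x\in X\mid \dist(x,\cl Q)<\varepsilon\}$ for a sufficiently small $\varepsilon>0$. This yields a bounded \emph{open} set with $Q\subseteq\widetilde Q\subseteq\interior\dom f$ on which $f$ is Lipschitz continuous and $\partial f$ is bounded.

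With $\widetilde Q$ in hand I would apply Theorem~\ref{thrm:HypodiffExistence} (or its second item, exploiting the boundedness of $\partial f$ on $\widetilde Q$) to produce a hypodifferential mapping of $f$ that is defined, consistent, exact, and has the Lipschitz property on $\widetilde Q$. Restricting this mapping to $Q\subseteq\widetilde Q$ preserves all four properties: consistency and exactness are quantified over pairs $y,x$ lying in the set and so remain valid for $y,x\in Q$, while both the Lipschitz continuity of the map in the Pompeiu--Hausdorff distance and the Lipschitzian approximation inequality~\eqref{eq:LipApprox} are inherited by any subset. This delivers the required map on $Q$.

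The main obstacle is precisely the first reduction, namely producing the open superset $\widetilde Q$ while remaining inside $\interior\dom f$ and keeping $f$ Lipschitz; this is where finite dimensionality is essential, through compactness of $\cl Q$ together with local Lipschitz continuity of convex functions. The delicate point is that one genuinely needs $\cl Q$ to be compactly contained in $\interior\dom f$: a convex function can fail to be Lipschitz on a bounded subset of $\interior\dom f$ whose closure meets $\partial(\dom f)$ (think of $-\sqrt{1-x^2}$ on $(-1,1)$), and in that situation no Lipschitz continuous hypodifferential can exist on $Q$ by Prp.~\ref{prp:BoundedHypodiff_LipContin}. Hence the only real care in the argument is to choose $\varepsilon$ so that $\widetilde Q$ does not approach the boundary of $\dom f$; once $\widetilde Q$ is chosen correctly, the remainder is a routine appeal to Theorem~\ref{thrm:HypodiffExistence} and a verification that restriction preserves the listed properties.
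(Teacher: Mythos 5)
Your reduction to Theorem~\ref{thrm:HypodiffExistence} is exactly the intended argument: the paper offers no separate proof of the corollary, and the only content to supply is precisely what you supply, namely that in finite dimensions a bounded $Q$ with $\cl Q\subseteq\interior\dom f$ sits inside a bounded \emph{open} set $\widetilde Q\subseteq\interior\dom f$ on which $f$ is Lipschitz (local Lipschitz continuity of convex functions plus compactness of $\cl Q$), after which the theorem applies and all four properties restrict from $\widetilde Q$ to $Q$ since each is quantified over points or pairs of points of the set. Your caveat is not a defect of your proof but a genuine gap in the corollary as literally stated: for $f(x)=-\sqrt{1-x^2}$ on $\dom f=[-1,1]$ and $Q=(-1,1)$, any hypodifferential map that is Lipschitz continuous on the bounded convex set $Q$ has uniformly bounded values, so Prp.~\ref{prp:BoundedHypodiff_LipContin} would force $f$ to be Lipschitz on $Q$, which it is not; hence the conclusion fails unless one additionally assumes $\cl Q\subseteq\interior\dom f$ (or, as in the paper's actual use of the corollary after Thm.~\ref{thm:MHD_ConvergenceRate}, that $f$ is real-valued, in which case $\interior\dom f=X$ and the issue disappears). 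With that hypothesis made explicit, your argument is complete and correct.
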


\begin{corollary}
The function $f$ has an exact hypodifferential mapping on an open bounded set $Q \subseteq \interior \dom f$ if and only
if $f$ is Lipschitz continuous on this set.
\end{corollary}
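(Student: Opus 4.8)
Since the statement is an equivalence, the plan is to treat the two implications separately. For the sufficiency (Lipschitz continuity implies the existence of an exact hypodifferential map), I would do nothing more than invoke Theorem~\ref{thrm:HypodiffExistence}: in its setting ($X$ reflexive), that theorem applied to the bounded open set $Q \subseteq \interior \dom f$ on which $f$ is Lipschitz continuous already yields a hypodifferential mapping that is consistent, exact, and has the Lipschitz property on $Q$. Exactness in particular is precisely what is claimed, so this direction is immediate.

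For the necessity, I would argue directly rather than through the transport construction of Proposition~\ref{prp:GlobalHypodiff_LipContin}. Assuming $\underline{d} f$ is exact on $Q$, I would fix an arbitrary base point $x_0 \in Q$ and apply the exactness identity with $x = x_0$ to obtain, for every $y \in Q$,
\[
  f(y) = f(x_0) + \Phi(y - x_0), \qquad
  \Phi(u) := \max_{(a, x^*) \in \underline{d} f(x_0)} \big( a + \langle x^*, u \rangle \big).
\]
Because $\underline{d} f(x_0)$ is weak${}^*$ compact, it is norm bounded, so $L := \sup\{ \| x^* \| \mid (a, x^*) \in \underline{d} f(x_0) \}$ is finite, and $\Phi$, being the pointwise maximum of affine functionals whose linear parts all have norm at most $L$, is globally Lipschitz continuous with constant $L$. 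Subtracting the two representations then gives $|f(y) - f(x)| = |\Phi(y - x_0) - \Phi(x - x_0)| \le L \| y - x \|$ for all $x, y \in Q$, i.e. the Lipschitz continuity of $f$ on $Q$.

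The point I expect to require the most care is the hypothesis mismatch: the corollary only assumes $Q$ open and bounded, whereas Propositions~\ref{prp:GlobalHypodiff_LipContin} and~\ref{prp:BoundedHypodiff_LipContin}, which would be the obvious tools for the necessity direction, both require $Q$ to be convex. The base-point argument above is designed precisely to avoid this: it represents $f$ on $Q$ as the restriction of the single globally defined Lipschitz function $f(x_0) + \Phi(\cdot - x_0)$, and therefore needs neither convexity nor path-connectedness of $Q$, nor a uniform bound on $\sup_{x \in Q} \sup_{(a, x^*) \in \underline{d} f(x)} \| x^* \|$ (only the bound at the single point $x_0$). With this observation in place the remainder is routine.
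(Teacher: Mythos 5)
The paper states this corollary without proof, so the comparison is against the implicit argument it invites: sufficiency from Theorem~\ref{thrm:HypodiffExistence} (which you invoke in exactly the intended way, under the standing reflexivity assumption of that theorem), and necessity presumably via Proposition~\ref{prp:GlobalHypodiff_LipContin} together with Proposition~\ref{prp:BoundedHypodiff_LipContin}. Your proof of the necessity direction is correct and takes a genuinely more careful route. You rightly notice that both of those propositions assume $Q$ convex, whereas the corollary only assumes $Q$ open and bounded; your base-point argument --- writing $f(y) = f(x_0) + \Phi(y - x_0)$ on all of $Q$ with $\Phi(u) = \max_{(a, x^*) \in \underline{d} f(x_0)}(a + \langle x^*, u \rangle)$, which is globally Lipschitz with constant $L = \sup\{\|x^*\| \mid (a, x^*) \in \underline{d} f(x_0)\}$ since $\underline{d} f(x_0)$ is norm bounded --- needs only exactness at the single point $x_0$ and no structural hypotheses on $Q$ whatsoever (not even boundedness or openness for this direction). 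In effect you have extracted the one computation from the proof of Proposition~\ref{prp:GlobalHypodiff_LipContin} that matters here and decoupled it from the convexity used there, which both closes the small hypothesis mismatch and yields a sharper conclusion (global Lipschitz continuity of the extension $f(x_0) + \Phi(\cdot - x_0)$, with an explicit constant determined at one point). The proposal is correct as written.
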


Although Theorem~\ref{thrm:HypodiffExistence} guarantees the existence of a consistent hypodifferential having the
Lipschitz property under very mild assumptions, the hypodifferential constructed in the proof of this theorem is of
theoretical value only. From the more practical perspective it is important to know how to compute such
hypodifferentials for particular nonsmooth convex functions. Let us show that the calculus rules presented in
Subsection~\ref{subsect:CalculusRules} preserve the Lipschitz property and, therefore, can be used to
compute consistent hypodifferentials having the Lipschitz property in various particular cases.

\begin{theorem}
Under the assumptions of Theorem~\ref{thrm:MaxInfFamily} the following statements hold true:
\begin{enumerate}
\item{If for any $t \in T$ the hypodifferential $\underline{d} f_t$ is a Lipschitzian approximation of $f(\cdot, t)$ on
$Q$ with Lipschitz constant $L_t > 0$ and $L = \sup_{t \in T} L_t < + \infty$, then hypodifferential
\eqref{eq:InfMaxHypodiff} is a Lipschitzian approximation of $f(\cdot) = \sup_{t \in T} f(\cdot, t)$ on $Q$ with
Lipschitz constant $L$.
}

\item{Let either $X$ be reflexive or the set $\cl D(x)$ defined in \eqref{eq:InfMaxPreHypodiff} be weak${}^*$ closed for
any $x \in Q$. Suppose that for any $t \in T$ the function $f(\cdot, t)$ is Lipschitz continuous on $Q$ with Lipschitz
constant $L_t > 0$ and the hypodifferential map $\underline{d} f_t$ is Lipschitz continuous on $Q$ with Lipschitz
constant $K_t > 0$. If $L = \sup_{t \in T} L_t < + \infty$ and $K = \sup_{t \in T} K_t < + \infty$, then
hypodifferential \eqref{eq:InfMaxHypodiff} is Lipschitz continuous on $Q$ with Lipschitz constant $2 L + K$.
}
\end{enumerate}
\end{theorem}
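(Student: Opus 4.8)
The plan is to reduce both statements to the data $f(\cdot,t)$ and $\underline{d}f_t(\cdot)$, exploiting the fact that the pointwise maximum corresponds to a union, convex-hull and weak${}^*$-closure operation on the generating sets in \eqref{eq:InfMaxHypodiff}, each of which I have already shown preserves continuity and, as I will use, the quantitative Lipschitz bound. For Part 1, the first step is to rewrite the max-type model. Fixing $x,y\in Q$ and writing $M_t(x,y)=\max_{(a,x^*)\in\underline{d}f_t(x)}(a+\langle x^*,y-x\rangle)$, I note that the affine functional $(a,x^*)\mapsto a+\langle x^*,y-x\rangle$ is weak${}^*$ continuous, so its maximum over $\underline{d}f(x)=\cl^* D(x)$ equals its supremum over $D(x)$ (see \eqref{eq:InfMaxPreHypodiff}); and since a linear functional has the same supremum over a set and over its convex hull, this supremum equals $\sup_{t\in T}\bigl(f(x,t)-f(x)+M_t(x,y)\bigr)$. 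The second step invokes the hypothesis that each $\underline{d}f_t$ is a Lipschitzian approximation of $f(\cdot,t)$, i.e. $|f(y,t)-f(x,t)-M_t(x,y)|\le\frac{L}{2}\|x-y\|^2$ for all $t$.

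Combining the upper half of this inequality with $f(y)=\sup_t f(y,t)$ gives $\sup_t\bigl(f(x,t)+M_t(x,y)\bigr)\ge f(y)-\frac{L}{2}\|x-y\|^2$, hence $f(y)-f(x)-\max_{(a,x^*)\in\underline{d}f(x)}(a+\langle x^*,y-x\rangle)\le\frac{L}{2}\|x-y\|^2$; the lower half together with $f(y,t)\le f(y)$ yields $\sup_t\bigl(f(x,t)+M_t(x,y)\bigr)\le f(y)+\frac{L}{2}\|x-y\|^2$, which gives the reverse bound. This is precisely \eqref{eq:LipApprox} with constant $L$, so Part 1 is settled by these two short estimates.

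For Part 2 the plan is a three-fold reduction of $d_{PH}(\underline{d}f(x),\underline{d}f(y))$. Since $X$ is reflexive or $\cl D(\cdot)$ is weak${}^*$ closed, one has $\underline{d}f(x)=\cl D(x)$ and $d_{PH}(\cl D(x),\cl D(y))=d_{PH}(D(x),D(y))$ (as in Prp.~\ref{prp:ClosureContinuous} and Prp.~\ref{prp:WeakStarClosureContinuous}), so it suffices to bound $d_{PH}(D(x),D(y))$. Writing $D(\cdot)=\co G(\cdot)$ with $G(\cdot)=\bigcup_{t\in T}G_t(\cdot)$ and $G_t(\cdot)=(f(\cdot,t)-f(\cdot),0)+\underline{d}f_t(\cdot)$, the quantitative content of the arguments in Prp.~\ref{prp:ConvexHullContinuous} and Prp.~\ref{prp:UnionContinuous} gives $d_{PH}(\co G(x),\co G(y))\le d_{PH}(G(x),G(y))\le\sup_{t\in T}d_{PH}(G_t(x),G_t(y))$: a convex combination of points each lying within $\eta$ of a target point stays within $\eta$ of the corresponding combination, and a uniform-in-$t$ bound on the pieces survives the union. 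It then remains to estimate each $d_{PH}(G_t(x),G_t(y))$. Given $(a,x^*)\in G_t(x)$, write $a=f(x,t)-f(x)+a'$ with $(a',x^*)\in\underline{d}f_t(x)$; since $\underline{d}f_t$ is $K_t$-Lipschitz continuous, Prp.~\ref{prp:PHdist} produces $(b',y^*)\in\underline{d}f_t(y)$ with $\|(a',x^*)-(b',y^*)\|\le K\|x-y\|$, and I set $(b,y^*)=(f(y,t)-f(y),0)+(b',y^*)\in G_t(y)$. With the norm $\|(a,x^*)\|=|a|+\|x^*\|$, the triangle inequality splits the distance into $|f(x,t)-f(y,t)|+|f(x)-f(y)|+\|(a',x^*)-(b',y^*)\|$; bounding the first term by $L\|x-y\|$ (as $f(\cdot,t)$ is $L_t$-Lipschitz), the second by $L\|x-y\|$ (the supremum $f$ is $L$-Lipschitz since each $f(\cdot,t)$ is), and the third by $K\|x-y\|$, I obtain $(2L+K)\|x-y\|$ uniformly in $t$; symmetrizing gives $d_{PH}(G_t(x),G_t(y))\le(2L+K)\|x-y\|$, and the three reductions finish the proof.

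The only genuinely delicate points are the two inequalities I am treating as quantitative versions of the earlier continuity propositions, namely $d_{PH}(\co A,\co B)\le d_{PH}(A,B)$ and $d_{PH}\bigl(\bigcup_t A_t,\bigcup_t B_t\bigr)\le\sup_t d_{PH}(A_t,B_t)$. These are not isolated as standalone estimates in Section~\ref{sect:Preliminaries} but are embedded inside the proofs of Prp.~\ref{prp:ConvexHullContinuous} and Prp.~\ref{prp:UnionContinuous}; I would either extract them as one-line lemmas or re-derive the needed bound inline through Prp.~\ref{prp:PHdist}. Everything else is routine bookkeeping with the product norm on $\mathbb{R}\times X^*$.
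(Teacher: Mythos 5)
Your proposal is correct and follows essentially the same route as the paper: Part 1 is the same pair of one-sided estimates obtained by taking suprema over $t$ in the Lipschitzian-approximation inequality and identifying $\max_{(a,x^*)\in\underline{d}f(x)}(a+\langle x^*,y-x\rangle)$ with $\sup_t\bigl(f(x,t)-f(x)+M_t(x,y)\bigr)$ (which the paper asserts as readily verifiable and you justify explicitly), and Part 2 is the same reduction to $D(\cdot)$ followed by the same $(2L+K)$ estimate. The only difference is organizational: you factor the Part 2 computation through the standalone bounds $d_{PH}(\co A,\co B)\le d_{PH}(A,B)$ and $d_{PH}(\bigcup_t A_t,\bigcup_t B_t)\le\sup_t d_{PH}(A_t,B_t)$, whereas the paper carries out the convex-combination bookkeeping inline with an $\varepsilon$ slack from Prp.~\ref{prp:PHdist}; both are valid.
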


\begin{proof}
We split the proof of the theorem into two parts corresponding to the two statements.

\textbf{Part 1.} Fix any $x, y \in Q$. By Definition~\ref{def:LipApprox} one has
\[
  f(y, t) \le f(x, t) + \max_{(a, x^*) \in \underline{d} f_t(x)} (a + \langle x^*, y - x \rangle)
  + \frac{L_t}{2} \| y - x \|^2.
\]
Hence
\begin{align*}
  f(y) &- f(x) = \sup_{t \in T} \big( f(y, t) - f(x) \big)
  \\
  &\le \sup_{t \in T} \Big( f(x, t) - f(x) + \max_{(a, x^*) \in \underline{d} f_t(x)} (a + \langle x^*, y - x \rangle)
  + \frac{L_t}{2} \| y - x \|^2 \Big)
  \\
  &\le \sup_{t \in T} \Big( f(x, t) - f(x) 
  + \max_{(a, x^*) \in \underline{d} f_t(x)} (a + \langle x^*, y - x \rangle \Big)
  + \frac{L}{2} \|y - x \|^2.
\end{align*}
One can readily verify that
\begin{multline*}
  \sup_{t \in T} \Big( f(x, t) - f(x) + \max_{(a, x^*) \in \underline{d} f_t(x)} (a + \langle x^*, y - x \rangle \Big)
  \\
  = \max_{(a, x^*) \in \underline{d} f(x)} (a + \langle x^*, y - x \rangle)
\end{multline*}
(see \eqref{eq:InfMaxHypodiff}). Therefore
\[
  f(y) - f(x) - \max_{(a, x^*) \in \underline{d} f(x)} (a + \langle x^*, y - x \rangle) \le \frac{L}{2} \|y - x \|^2.
\]
Applying the inequality
\[
  f(y, t) \ge f(x, t) + \max_{(a, x^*) \in \underline{d} f_t(x)} (a + \langle x^*, y - x \rangle)
  - \frac{L_t}{2} \| y - x \|^2,
\]
that also follows from Definition~\ref{def:LipApprox}, and repeating the same argument one gets that
\[
  f(y) - f(x) - \max_{(a, x^*) \in \underline{d} f(x)} (a + \langle x^*, y - x \rangle) \ge - \frac{L}{2} \|y - x \|^2.
\]
Consequently, $\underline{d} f$ is a Lipschitzian approximation of $f$ on $Q$.

\textbf{Part 2.} If either $X$ is reflexive or $\cl D(x)$ is weak${}^*$ closed for any $x \in X$, then, as was noted
several times above, $d_{PH}(\underline{d} f(x), \underline{d} f(y)) = d_{PH}(D(x), D(y))$ for any $x, y \in Q$.
Therefore, it is sufficient to prove that the set-valued map $D(\cdot)$ is Lipschitz continuous on $Q$.

Fix any $x, y \in Q$ and $\varepsilon > 0$. Let $(a, x^*) \in D(x)$. Then by definition (see
\eqref{eq:InfMaxPreHypodiff}) there exist $n \in \mathbb{N}$, $t_i \in T$, $i \in I := \{ 1, \ldots, n \}$, 
$(a_i, x_i^*) \in \underline{d} f_{t_i}(x)$, $i \in I$, and $\alpha_i \ge 0$, $i \in I$, such that
\[
  (a, x^*) = \sum_{i = 1}^n \alpha_i (f(x, t_i) - f(x) + a_i, x_i^*), \quad
  \sum_{i = 1}^n \alpha_i = 1.
\]
By our assumption $d_{PH}(\underline{d} f_t(x), \underline{d} f_t(y)) \le K_t \| x - y \|$. Hence by
Proposition~\ref{prp:PHdist} there exist $(b_i, y_i^*) \in \underline{d} f_{t_i}(y)$, $i \in I$ such that 
$\| (a_i, x_i^*) - (b_i, y_i^*) \| \le K_{t_i} \| x - y \| + \varepsilon$. Define
\[
  (b, y^*) = \sum_{i = 1}^n \alpha_i (f(y, t_i) - f(y) + b_i, y_i^*).
\]
Clearly, $(b, y^*) \in D(y)$. Therefore
\begin{multline*}
  \dist( (a, x^*), D(y) ) \le \| (a, x^*) - (b, y^*) \| 
  \\
  \le \sum_{i = 1}^n \alpha_i \Big( |f(x, t_i) - f(y, t_i)| + |f(x) - f(y)| + \| (a_i, x_i^*) - (b_i, y_i^*) \| \Big)
  \\
  \le \sum_{i = 1}^n \alpha_i \Big( L_t \| x - y \| + L \| x - y \| + K_{t_i} \| x - y \| + \varepsilon \Big)
  \le (2 L + K) \| x - y \| + \varepsilon 
\end{multline*}
(here we used the fact that the function $f(\cdot) = \sup_{t \in T} f(\cdot, t)$ is Lipschitz continuous on $Q$ with
Lipschitz constant $L = \sup_{t \in T} L_t$). Since $(a, x^*) \in D(x)$ and $\varepsilon > 0$ were chosen arbitrarily,
one obtains
\[
  \sup_{(a, x^*) \in D(x)} \dist( (a, x^*), D(y) ) \le (2 L + K) \| x - y \|.
\]
Swapping $x$ and $y$ one can conclude that $d_{PH}(D(x), D(y)) \le (2 L + K) \| x - y \|$, which completes the proof.
\end{proof}

\begin{theorem}
Let the assumptions of Theorem~\ref{thm:OuterComposition} be satisfied, the gradient $\nabla g(\cdot)$ of $g$ be defined
and Lipschitz continuous with Lipschitz constant $L_g > 0$ on the set $\co f(Q)$, and there exist $C > 0$ such that 
$\| \nabla g(f(x)) \| \le C$ for any $x \in Q$. Then the following statements hold true:
\begin{enumerate}
\item{If for any $i \in I$ the hypodifferential map $\underline{d} f_i$ is a Lipschitzian approximation of $f_i$ on $Q$
with Lipschitz constant $L_i$ and $f_i$ is Lipschitz continuous on $Q$ with Lipschitz constant $K_i$, then 
the hypodifferential map $\underline{d} h$ (see \eqref{eq:OuterCompositionHypodiff}) is a Lipschitzian approximation of
the function $h(\cdot) = g(f(\cdot))$ with Lipschitz constant $L = C \sum_{i \in I} L_i + 2 L_g (\sum_{i \in I} K_i)^2$.
}

\item{Suppose that for any $i \in I$ the hypodifferential map $\underline{d} f_i$ is Lipschitz continuous on $Q$ with
Lipschitz constant $L_i > 0$ and there exists $C_i > 0$ such that $\| (a, x^*) \| \le C_i$ for all 
$(a, x^*) \in \underline{d} f_i(x)$ and $x \in Q$. Then the hypodifferential map $\underline{d} h$ is Lipschitz
continuous on $Q$ with Lipschitz constant $l = C \sum_{i \in I} L_i + L_g \sum_{i \in I} C_i$.
}
\end{enumerate}
\end{theorem}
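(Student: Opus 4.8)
The plan is to base both parts on the Minkowski-sum structure of the composition hypodifferential \eqref{eq:OuterCompositionHypodiff}. Writing $c_i(x) = \partial g(f(x))/\partial y_i \ge 0$ (nonnegative since $g$ is nondecreasing) and
\[
  \Phi_i(x, y) = \max_{(a, x^*) \in \underline{d} f_i(x)} (a + \langle x^*, y - x \rangle),
\]
the nonnegativity of the coefficients together with the fact that the maximum of a linear functional over a Minkowski sum is the sum of the maxima gives
\[
  \max_{(a, x^*) \in \underline{d} h(x)} (a + \langle x^*, y - x \rangle) = \sum_{i = 1}^n c_i(x) \Phi_i(x, y).
\]
This identity is the backbone of the argument, since it reduces everything to estimating the scalar approximation errors $\delta_i := \Phi_i(x, y) - (f_i(y) - f_i(x))$ and the smooth error coming from $g$.

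For the first statement I would combine two ingredients. Since $\nabla g$ is Lipschitz on $\co f(Q)$ (and $f(x), f(y) \in \co f(Q)$), the standard descent estimate (cf.\ \cite[Lemma~1.2.3]{Nesterov_book}) yields
\[
  \Big| h(y) - h(x) - \langle \nabla g(f(x)), f(y) - f(x) \rangle \Big| \le \frac{L_g}{2} \| f(y) - f(x) \|^2;
\]
the Lipschitzian approximation property of each $\underline{d} f_i$ (Def.~\ref{def:LipApprox}) gives $|\delta_i| \le (L_i/2)\|x - y\|^2$. Subtracting $\sum_i c_i(x)\Phi_i(x,y)$ from $h(y) - h(x)$ and inserting the identity $\langle \nabla g(f(x)), f(y) - f(x)\rangle = \sum_i c_i(x)(f_i(y) - f_i(x))$, the gradient term cancels, leaving the smooth error plus $\sum_i c_i(x)$ times the individual errors $\delta_i$. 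Bounding $c_i(x) \le \|\nabla g(f(x))\| \le C$, using $\|f(y) - f(x)\|^2 \le (\sum_i K_i)^2 \|x - y\|^2$ (each $f_i$ being $K_i$-Lipschitz), and collecting terms then produces a Lipschitzian approximation of $h$ with a Lipschitz constant of the stated form.

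For the second statement I would argue directly on $\underline{d} h$, which is a finite Minkowski sum of weak${}^*$ compact sets and hence requires no closure. Fixing $x, y \in Q$ and $\varepsilon > 0$, I take an arbitrary $(a, x^*) = \sum_i c_i(x)(a_i, x_i^*) \in \underline{d} h(x)$ with $(a_i, x_i^*) \in \underline{d} f_i(x)$, invoke the Lipschitz continuity of each $\underline{d} f_i$ together with Prp.~\ref{prp:PHdist} to choose $(b_i, y_i^*) \in \underline{d} f_i(y)$ with $\|(a_i, x_i^*) - (b_i, y_i^*)\| \le L_i\|x - y\| + \varepsilon$, and form the competitor $(b, y^*) = \sum_i c_i(y)(b_i, y_i^*) \in \underline{d} h(y)$. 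Adding and subtracting $\sum_i c_i(x)(b_i, y_i^*)$ bounds $\|(a, x^*) - (b, y^*)\|$ by
\[
  \sum_i c_i(x) \big\| (a_i, x_i^*) - (b_i, y_i^*) \big\| + \sum_i |c_i(x) - c_i(y)|\, \big\| (b_i, y_i^*) \big\|.
\]
The first sum is at most $C\sum_i L_i\|x - y\| + nC\varepsilon$; in the second I use $\|(b_i, y_i^*)\| \le C_i$ and control the coefficient increments $|c_i(x) - c_i(y)|$ through the Lipschitz continuity of $\nabla g$ on $\co f(Q)$ (each $f_i$ being Lipschitz on $Q$ by the uniform bound $C_i$ together with Prp.~\ref{prp:BoundedHypodiff_LipContin}). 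Passing to the limit $\varepsilon \to +0$, taking the supremum over $(a, x^*)$, and swapping $x$ and $y$ then delivers $d_{PH}(\underline{d} h(x), \underline{d} h(y)) \le l\|x - y\|$ with $l$ as claimed.

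The main obstacle is bookkeeping rather than conceptual. The delicate point is the coefficient-increment term $\sum_i |c_i(x) - c_i(y)|\,\|(b_i, y_i^*)\|$ in the second part, where the Lipschitz continuity of $\nabla g$ must be composed with the Lipschitz behaviour of the vector map $f$ on $Q$ to extract a clean $\|x - y\|$ factor; a parallel care is needed in the first part when the vector norm $\|f(y) - f(x)\|$ is traded for the scalar constants $K_i$. Once the Minkowski-sum identity is in place, the remainder is a careful triangle-inequality accounting of which Lipschitz constant feeds into which term.
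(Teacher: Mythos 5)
Your proposal is correct and follows essentially the same route as the paper: the Minkowski-sum identity for the max over $\underline{d} h(x)$, the Lipschitzian approximation of each $\underline{d} f_i$ weighted by the nonnegative partial derivatives in Part~1, and the add-and-subtract triangle-inequality bookkeeping with Prp.~\ref{prp:PHdist} in Part~2. The only (harmless) deviation is in Part~1, where you invoke the descent lemma for $g$ in place of the paper's mean value theorem plus Lipschitz continuity of $\nabla g$; this actually yields $\tfrac{L_g}{2}\|f(y)-f(x)\|^2$ instead of $L_g\|f(y)-f(x)\|^2$ and so still delivers (indeed slightly improves) the stated constant.
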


\begin{proof}
We split the proof of the theorem into two parts corresponding to the two statements.

\textbf{Part 1.} Fix any $x, y \in Q$. By the mean value theorem there exists $z \in \co\{ f(x), f(y) \}$ such that
\begin{align*}
  h(y) - h(x) &= g(f(y)) - g(f(x)) = \langle \nabla g(z), f(y) - f(x) \rangle 
  \\
  &= \langle \nabla g(f(x)), f(y) - f(x) \rangle 
  + \langle \nabla g(z) - \nabla g(f(x)), f(y) - f(x) \rangle.
\end{align*}
Hence applying the fact that $\underline{d} f_i$ is a Lipschitzian approximation of $f_i$ one gets
\begin{align*}
  h(y) - h(x) &\le \sum_{i = 1}^n \frac{\partial g(f(x))}{\partial y_i} 
  \Big( \max_{(a_i, x_i^*) \in \underline{d} f_i(x)} (a_i + \langle x_i^*, y - x \rangle) 
  \\
  &+ \frac{L_i}{2} \| y - x \|^2 \Big)
  + \langle \nabla g(z) - \nabla g(f(x)), f(y) - f(x) \rangle
\end{align*}
(note that $\nabla g(f(x)) \ge 0$, since $g$ is a non-decreasing function by the assumptions of
Theorem~\ref{thm:OuterComposition}). Bearing in mind the assumptions of the theorem and the definition of 
$\underline{d} h(x)$ (see \eqref{eq:OuterCompositionHypodiff})), one obtains
\begin{align*}
  h(y) - h(x) &\le \max_{(a, x^*) \in \underline{d} h(x)} (a + \langle x^*, y - x \rangle) 
  + C \Big( \sum_{i = 1}^n \frac{L_i}{2} \Big) \| y - x \|^2 
  \\
  &+ L_g |z - f(x)| \cdot |f(y) - f(x)| \le \max_{(a, x^*) \in \underline{d} h(x)} (a + \langle x^*, y - x \rangle)
  \\
  &+ C \sum_{i = 1}^n \frac{L_i}{2} \| y - x \|^2 + L_g \Big( \sum_{i = 1}^n K_i \Big)^2 \| y - x \|^2
\end{align*}
(here we used the fact that $\| z - f(x) \| \le \| f(y) - f(x) \|$, since $z \in \co\{ f(x), f(y) \}$). Arguing in 
the same way and one can readily check that 
\[
  h(y) - h(x) \ge \max_{(a, x^*) \in \underline{d} h(x)} (a + \langle x^*, y - x \rangle) 
  - \Big( C \sum_{i = 1}^n \frac{L_i}{2} +  L_g (\sum_{i = 1}^n K_i)^2 \Big) \| y - x \|^2,
\]
which implies the required result.

\textbf{Part 2.} Fix any $x, y \in Q$. Let $(a, x^*) \in \underline{d} h(x)$. Then by definition (see
\eqref{eq:OuterCompositionHypodiff}) for any $i \in I$ there exists $(a_i, x_i^*) \in \underline{d} f_i(x)$ such that
\[
  (a, x^*) = \sum_{i = 1}^n \frac{\partial g(f(x))}{\partial y_i} (a_i, x_i^*).
\]
Choose any $\varepsilon > 0$. By our assumption 
$d_{PH}(\underline{d} f_i(x), \underline{d} f_i(y)) \le L_i \| x - y \|$. Therefore,
by Proposition~\ref{prp:PHdist} for any $i \in I$ there exists $(b_i, y_i^*) \in \underline{d} f_i(y)$ such that
$\| (a_i, x_i^*) - (b_i, y_i^*) \| \le L_i \| x - y \| + \varepsilon$. Define
\[
  (b, y^*) = \sum_{i = 1}^n \frac{\partial g(f(y))}{\partial y_i} (b_i, y_i^*).
\]
Clearly, $(b, y^*) \in \underline{d} h(y)$, which implies that
\begin{align*}
  \dist( (a, x^*), \underline{d} h(y) ) &\le \| (a, x^*) - (b, y^*) \|
  \\
  &= \Big\| \sum_{i = 1}^n \frac{\partial g(f(x))}{\partial y_i} (a_i, x_i^*) 
  - \sum_{i = 1}^n \frac{\partial g(f(y))}{\partial y_i} (b_i, y_i^*) \Big\|.
\end{align*}
Adding and subtracting $\sum_{i \in I} \frac{\partial g(f(y))}{\partial y_i} (a_i, x_i^*)$ one gets
\begin{align*}
  \dist( (a, x^*), \underline{d} h(y) ) &\le \sum_{i = 1}^n 
  \left| \frac{\partial g(f(x))}{\partial y_i} -  \frac{\partial g(f(y))}{\partial y_i}  \right| \| (a_i, x_i^*) \|
  \\
  &+ \sum_{i = 1}^n \left| \frac{\partial g(f(y))}{\partial y_i} \right| \| (a_i, x_i^*) - (b_i, y_i^*) \|
  \\
  &\le L_g \Big( \sum_{i = 1}^n C_i \Big) \| x - y \| + \sum_{i = 1}^n C (L_i \| x - y \| + \varepsilon).
\end{align*}
Hence taking into account the facts that $(a, x^*) \in \underline{d} h(x)$ and $\varepsilon > 0$ were chosen
arbitrarily one obtains
\[
  \sup_{(a, x^*) \in \underline{d} h(x)} \dist( (a, x^*), \underline{d} h(y) )
  \le \Big( L_g \sum_{i = 1}^n C_i + C \sum_{i = 1}^n L_i \Big) \| x - y \|.
\]
Now, swapping $x$ and $y$ we arrive at the required result.
\end{proof}

\begin{corollary}
Let $f_i \colon X \to \mathbb{R} \cup \{ + \infty \}$, $i \in I = \{ 1, \ldots, n \}$, be closed convex functions and
$\underline{d} f_i(\cdot)$ be their hypodifferential mappings defined on a set 
$Q \subseteq \bigcap_{i \in I} \dom f_i$. Let also $\lambda_i \ge 0$, $i \in I$, $f = \sum_{i \in I} \lambda_i f_i$, and
$\underline{d} f = \sum_{i \in I} \lambda_i d f_i$. The following statements hold true:
\begin{enumerate}
\item{If for any $i \in I$ the hypodifferential map $\underline{d} f_i$ is a Lipschitzian approximation of $f_i$ on $Q$
with Lipschitz constant $L_i > 0$, then the hypodifferential map $\underline{d} f$ is a Lipschitzian approximation of
$f$ on $Q$ with Lipschitz constant $L = \sum_{i \in I} \lambda_i L_i$.
}

\item{If for any $i \in I$ the hypodifferential map $\underline{d} f_i$ is Lipschitz continuous on $Q$ with Lipschitz
constant $L_i > 0$, then the hypodifferential map $\underline{d} f$ is Lipschitz continuous on $Q$ with Lipschitz
constant $L = \sum_{i \in I} \lambda_i L_i$.
}
\end{enumerate}
\end{corollary}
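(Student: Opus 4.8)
The plan is to prove both statements directly, exploiting the additivity of the relevant quantities under conic combination rather than deducing them from the preceding composition theorem; the latter route would force the linear choice $g(y) = \sum_{i \in I} \lambda_i y_i$ and introduce an extraneous factor $C = \| \nabla g \|$, so it would not produce the clean constant $L = \sum_{i \in I} \lambda_i L_i$. The single structural fact underlying everything is that, because $\underline{d} f(x) = \sum_{i \in I} \lambda_i \underline{d} f_i(x)$ is a nonnegatively weighted Minkowski sum and the map $(a, x^*) \mapsto a + \langle x^*, y - x \rangle$ is a linear functional on $\mathbb{R} \times X^*$, the associated support function splits:
\[
  \max_{(a, x^*) \in \underline{d} f(x)} (a + \langle x^*, y - x \rangle)
  = \sum_{i \in I} \lambda_i \max_{(a_i, x_i^*) \in \underline{d} f_i(x)} (a_i + \langle x_i^*, y - x \rangle)
  \quad \forall x \in Q, \: y \in X,
\]
all maxima being attained by weak${}^*$ compactness.

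For the first statement, I would combine this identity with $f = \sum_{i \in I} \lambda_i f_i$ to express the approximation error of $\underline{d} f$ as the $\lambda$-weighted sum of the individual approximation errors:
\[
  f(y) - f(x) - \max_{(a, x^*) \in \underline{d} f(x)} (a + \langle x^*, y - x \rangle)
  = \sum_{i \in I} \lambda_i \Big( f_i(y) - f_i(x)
  - \max_{(a_i, x_i^*) \in \underline{d} f_i(x)} (a_i + \langle x_i^*, y - x \rangle) \Big).
\]
Applying the triangle inequality together with Def.~\ref{def:LipApprox} for each $\underline{d} f_i$ then bounds the absolute value of the left-hand side by $\tfrac{1}{2}\big( \sum_{i \in I} \lambda_i L_i \big) \| x - y \|^2$, which is exactly the claimed Lipschitzian approximation property with $L = \sum_{i \in I} \lambda_i L_i$.

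For the second statement, I would rely on two elementary properties of the Pompeiu-Hausdorff distance under Minkowski operations: subadditivity, $d_{PH}(A_1 + A_2, B_1 + B_2) \le d_{PH}(A_1, B_1) + d_{PH}(A_2, B_2)$, and positive homogeneity, $d_{PH}(\lambda A, \lambda B) = \lambda\, d_{PH}(A, B)$ for $\lambda \ge 0$. Chaining these over $i \in I$ and then invoking the assumed Lipschitz continuity of each $\underline{d} f_i$ gives
\[
  d_{PH}\big( \underline{d} f(x), \underline{d} f(y) \big)
  \le \sum_{i \in I} \lambda_i\, d_{PH}\big( \underline{d} f_i(x), \underline{d} f_i(y) \big)
  \le \Big( \sum_{i \in I} \lambda_i L_i \Big) \| x - y \|,
\]
as required.

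The argument is essentially routine, and the only point deserving care is the pair of Minkowski-sum properties of $d_{PH}$ used in the second statement, neither of which is recorded as a standalone lemma in Section~\ref{sect:Preliminaries}. I would therefore either state them explicitly or note that subadditivity is precisely the inductive form of the estimate already carried out inside the proof of Prp.~\ref{prp:SumContinuous}, while positive homogeneity is immediate from $\dist(\lambda a, \lambda B) = \lambda\, \dist(a, B)$. Once these are in hand, both statements collapse to the triangle inequality, so I anticipate no genuine obstacle.
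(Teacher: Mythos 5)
Your proof is correct, but it is not the route the paper takes: the paper gives no standalone argument for this corollary and instead presents it as an instance of Thm.~\ref{thm:OuterComposition} and its Lipschitz-preservation counterpart with the linear outer function $g(y) = \sum_{i \in I} \lambda_i y_i$ (for which $\nabla g \equiv \lambda$ and $L_g = 0$). You prove both statements directly from the additivity of support functions under nonnegatively weighted Minkowski sums and from the subadditivity and positive homogeneity of $d_{PH}$ under Minkowski operations. Your observation motivating this choice is accurate and worth keeping: specializing the composition theorem \emph{as stated} replaces each weight $\lambda_i$ by the uniform bound $C \ge \| \nabla g(f(x)) \|$ and so yields the cruder constant $C \sum_{i \in I} L_i$ rather than $\sum_{i \in I} \lambda_i L_i$; recovering the sharp constant from that route requires re-tracing the theorem's proof while keeping $\partial g / \partial y_i = \lambda_i$ exact. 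Your direct argument also dispenses with the extraneous hypotheses that the composition theorem would impose (the Lipschitz constants $K_i$ of the $f_i$ and the uniform bounds $C_i$ on the hypodifferentials, which only enter through the $L_g$-terms that vanish here). The two unlabelled facts you rely on for the second statement are indeed not recorded as lemmas in Section~\ref{sect:Preliminaries}, but, as you note, subadditivity is exactly the estimate carried out inside the proof of Prp.~\ref{prp:SumContinuous} and homogeneity is immediate, so a one-line remark suffices. In short: your proof buys a self-contained argument with the stated sharp constant and minimal hypotheses, at the cost of a little more text than the paper's silent appeal to the composition theorem.
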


The following result can be readily verified directly.

\begin{proposition}
Under the assumptions of Proposition~\ref{prp:AffineShiftHypodiff} the following statements hold true:
\begin{enumerate}
\item{If the hypodifferential $\underline{d} f$ is a Lipschitzian approximation of $f$ on $Q$ with Lipschitz constant
$L$, then the hypodifferential $\underline{d} g$ (see \eqref{eq:AffineShiftHypodiff}) is a Lipschitzian approximation of
$g$ on $A^{-1}(Q - b)$ with Lipschitz constant $L \| A \|$.
}

\item{If the hypodifferential map $\underline{d} f$ is Lipschitz continuous on $Q$ with Lipschitz constant $L$, then 
the hypodifferential $\underline{d} g$ is Lilpschitz continuous on $A^{-1}(Q - b)$ with Lipschitz constant 
$L \max\{ 1, \| A^* \| \}$.
}
\end{enumerate}
\end{proposition}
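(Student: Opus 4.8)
The plan is to transport everything at the points $y, y' \in A^{-1}(Q - b)$ to the corresponding statements for $f$ at the images $x = Ay + b$ and $x' = Ay' + b$, which both lie in $Q$. Two elementary transformation facts drive both parts. First, the increment is carried linearly, $x' - x = A(y' - y)$, so that $\|x' - x\| \le \|A\|\,\|y' - y\|$. Second, by \eqref{eq:AffineShiftHypodiff} a pair $(a, y^*)$ belongs to $\underline{d} g(y)$ exactly when $y^* = A^* x^*$ for some $(a, x^*) \in \underline{d} f(x)$, the first coordinate $a$ being transported unchanged. In other words $\underline{d} g(y)$ is the image of $\underline{d} f(x)$ under the fixed continuous linear map $S(a, x^*) = (a, A^* x^*)$.

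For part~1 the decisive observation is that the max-type approximations attached to $\underline{d} g(y)$ and $\underline{d} f(x)$ are identical. For every $(a, x^*) \in \underline{d} f(x)$ one has $\langle A^* x^*, y' - y \rangle = \langle x^*, A(y' - y) \rangle = \langle x^*, x' - x \rangle$, and since the first coordinates agree,
\[
  \max_{(a, y^*) \in \underline{d} g(y)} \big( a + \langle y^*, y' - y \rangle \big)
  = \max_{(a, x^*) \in \underline{d} f(x)} \big( a + \langle x^*, x' - x \rangle \big).
\]
As $g(y') - g(y) = f(x') - f(x)$, the deviation for $g$ at $(y, y')$ equals, term by term, the deviation for $f$ at $(x, x')$. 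Applying the Lipschitzian approximation bound \eqref{eq:LipApprox} for $\underline{d} f$ controls this deviation by $\frac{L}{2}\|x' - x\|^2$, and inserting $\|x' - x\| \le \|A\|\,\|y' - y\|$ turns it into a bound proportional to $\|y' - y\|^2$, i.e. the Lipschitzian approximation property for $\underline{d} g$ on $A^{-1}(Q - b)$ with the constant $L\|A\|$ of the statement.

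For part~2 I would estimate $d_{PH}(\underline{d} g(y), \underline{d} g(y')) = d_{PH}(S(\underline{d} f(x)), S(\underline{d} f(x')))$ directly. Fix $\varepsilon > 0$ and any $(a, y^*) \in \underline{d} g(y)$ with preimage $(a, x^*) \in \underline{d} f(x)$. Since $\underline{d} f$ is $L$-Lipschitz continuous, $d_{PH}(\underline{d} f(x), \underline{d} f(x')) \le L\|x' - x\|$, so Prp.~\ref{prp:PHdist} supplies $(b, z^*) \in \underline{d} f(x')$ with $\|(a, x^*) - (b, z^*)\| \le L\|x' - x\| + \varepsilon$. Then $(b, A^* z^*) \in \underline{d} g(y')$ and $\|(a, y^*) - (b, A^* z^*)\| = |a - b| + \|A^*(x^* - z^*)\|$, where the first coordinate is unscaled and the dual coordinate is scaled by $A^*$ with $\|A^*\| = \|A\|$. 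Taking the supremum over $(a, y^*) \in \underline{d} g(y)$, letting $\varepsilon \to +0$, combining with $\|x' - x\| \le \|A\|\,\|y' - y\|$, and symmetrising in $y, y'$ through Prp.~\ref{prp:PHdist} yields the Lipschitz-continuity estimate for $\underline{d} g$ with the stated constant $L\|A\|$.

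The genuinely routine ingredients here — the term-by-term identity of the two max-type functions, the representative matching furnished by Prp.~\ref{prp:PHdist}, and the final symmetrisation — raise no difficulty. The one step I expect to be the real obstacle is the precise bookkeeping of the operator norm: a factor of $\|A\|$ can enter both through the scaling of the domain increment by $A$ and through the scaling of the dual coordinate by $A^*$, whereas the first coordinate of the hypodifferential is transported with no scaling whatsoever. Checking that these contributions assemble into exactly the claimed constant $L\|A\|$, and not some larger power of $\|A\|$, is the delicate point on which I would concentrate the careful verification.
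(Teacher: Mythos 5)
Your transport argument is the natural one, and there is in fact no proof in the paper to compare it against: the proposition is prefaced by ``The following result can be readily verified directly.'' The two identities you establish --- that $g(y') - g(y) = f(x') - f(x)$ for $x = Ay + b$, $x' = Ay' + b$, and that
\[
  \max_{(a, y^*) \in \underline{d} g(y)} \big( a + \langle y^*, y' - y \rangle \big)
  = \max_{(a, x^*) \in \underline{d} f(x)} \big( a + \langle x^*, x' - x \rangle \big)
\]
--- are correct and reduce everything to the corresponding properties of $\underline{d} f$ on $Q$. The problem is that the one step you explicitly defer, the bookkeeping of $\| A \|$, is exactly where the argument does not close, and no amount of care will make it produce the constant $L \| A \|$.

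In part 1 the deviation for $g$ at $(y, y')$ equals the deviation for $f$ at $(x, x')$, which \eqref{eq:LipApprox} bounds by $\frac{L}{2} \| x' - x \|^2$; substituting $\| x' - x \| \le \| A \| \, \| y' - y \|$ gives $\frac{L \| A \|^2}{2} \| y' - y \|^2$, i.e. the constant $L \| A \|^2$, because the factor $\| A \|$ is squared together with the increment. In part 2 the budget $\| (a, x^*) - (b, z^*) \| \le L \| x' - x \| + \varepsilon$ is shared between the two components: writing $\delta_1 = |a - b|$ and $\delta_2 = \| x^* - z^* \|$, your matched pair satisfies $\| (a, A^* x^*) - (b, A^* z^*) \| \le \delta_1 + \| A \| \delta_2 \le \max\{1, \| A \|\} \big( L \| A \| \, \| y' - y \| + \varepsilon \big)$, so this route yields $L \max\{ \| A \|, \| A \|^2 \}$, again not $L \| A \|$. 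These are not artifacts of loose estimates. Take $X = Y = \mathbb{R}$, $Ay = cy$ with $c > 1$, $b = 0$, $f(x) = \frac{L}{2} x^2$, and $\underline{d} f(x) = \{ (0, Lx) \}$, which is a consistent hypodifferential map that is a Lipschitzian approximation of $f$ and Lipschitz continuous with exact constant $L$. Then $g(y) = \frac{L c^2}{2} y^2$ and $\underline{d} g(y) = \{ (0, L c^2 y) \}$, so both the Lipschitzian-approximation constant and the Lipschitz constant of $\underline{d} g$ equal $L c^2 = L \| A \|^2 > L \| A \|$. Hence the constant stated in the proposition is itself erroneous (it should read $L \| A \|^2$, or equivalently one must assume $\| A \| \le 1$), and your proof, which asserts rather than verifies that the contributions ``assemble into exactly the claimed constant $L \| A \|$'', leaves precisely this false step unproved.
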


\section{Applications to nonsmooth convex optimization}
\label{sect:Applications}

In this section we consider applications of the theory of hypodifferentials of nonsmooth convex function to convex
optimization. Our goal is to present several different versions of the method of hypodifferential descent for nonsmooth
convex minimization and estimate their rate of convergence.

\subsection{The method of hypodifferential descent}
\label{subsect:MHD}

For the sake of simplicity, suppose that $X$ is a real Hilbert space, although under some additional assumptions 
the results presented below can be partially extended to the case of general Banach spaces. 
Let the space $\mathbb{R} \times X$ be equipped with the norm $\| (a, x) \| = \sqrt{|a|^2 + \| x \|^2}$ for any 
$(a, x) \in \mathbb{R} \times X$ and $f \colon X \to \mathbb{R}$ be a closed convex function. Suppose that 
a hypodifferential map $\underline{d} f(\cdot)$ of this function defined on $X$ is known. Recall that in the case of
Hilbert spaces hypodifferential is a convex weakly compact (or, equivalently, convex, bounded, and closed) subset of 
the space $\mathbb{R} \times X$ (see Remark~\ref{rmrk:HypodiffDef}).

Consider the following optimization problem:
\[
  \min_{x \in X} \enspace f(x).
\]
Let $f_*$ be the optimal value of the problem. Following the ideas of Demyanov \cite{DemyanovRubinov} one can propose a
method for minimizing the function $f$ called \textit{the method of hypodifferential descent}, whose theoretical
scheme is given in Algorithm~\ref{alg:HypodiffDescent}.

\begin{algorithm} \label{alg:HypodiffDescent}
\caption{The method of hypodifferential descent.}

\textbf{Initialisation.} Choose an initial point $x_0 \in X$ and put $k = 0$.

\textbf{Step 1.} Compute $\underline{d} f(x_k)$. Find an optimal solution $(a_k, v_k)$ of the problem
\begin{equation} \label{prob:DistToHypodiff}
  \min \enspace a^2 + \| v \|^2 \quad \text{ subject to } (a, v) \in \underline{d} f(x_k).
\end{equation}

\textbf{Step 2.} Choose a step size $\alpha_k > 0$ and put $x_{k + 1} = x_k - \alpha_k v_k$.

\textbf{Step 3.} Check a \textbf{stopping criterion}. If it is satisfied, \textbf{Stop}. Otherwise, 
put $k \leftarrow k + 1$ and go to \textbf{Step 1}.
\end{algorithm}

Note that if the hypodifferential $\underline{d} f(x_k)$ is a polytope, then the subproblem on Step~2 of the method of
hypodifferential descent is the problem of finding an element of a polytope with the smallest norm for which there
exists many efficient methods \cite{Gilbert,Wolfe,BarberoLopez,Dolgopolik_AccelTechnique}. It should also be mentioned
that one can use the following inequalities
\[
  |f(x_{k + 1}) - f(x_k)| \le \varepsilon \quad \text{and/or} \quad
  \dist\big( 0, \underline{d} f(x_k) \big) = \sqrt{a_k^2 + \| v_k \|^2} \le \varepsilon
\]
with small $\varepsilon > 0$ as a stopping criterion for the method of hypodifferential descent.

The rate of convergence $\mathcal{O}(1/k)$ of the method of hypodifferential with the step sizes $\alpha_k$ chosen
according to the Armijo's rule was proved in \cite{Dolgopolik_HypodiffDescent}. Here we will prove that the same
rate of convergence holds true for the version of the method with constant step size and provide a rigorous
justification for the stopping criteria discussed above. Let us note that although the proof of this result is similar
to the proof of \cite[Theorem~3.5]{Dolgopolik_HypodiffDescent}, there are certain significant differences between them
(e.g. the fact that one needs find an upper estimate for the constant step size ensuring that the sequence 
$\{ f(x_k) \}$ is decreasing).

\begin{theorem} \label{thm:MHD_ConvergenceRate}
Let the sublevel set $S(x_0) = \{ x \in X \mid f(x) \le f(x_0) \}$ be bounded. Suppose also that the hypodifferential
mapping $\underline{d} f(\cdot)$ is consistent and bounded on the set $S(x_0)$ and there exists $\varepsilon > 0$ such
that $\underline{d} f$ is a Lipschitzian approximation of $f$ on the set 
$S_{\varepsilon}(x_0) = \{ x \in X \mid \dist(x, S(x_0)) \le \varepsilon \}$ with Lipschitz constant $L > 0$. Then there
exists $\widehat{\alpha}$ such that for any $\alpha \in (0, \widehat{\alpha})$ the method of hypodifferential descent
with $\alpha_k \equiv \alpha$ generates sequence $\{ x_k \}$ satisfying the following conditions:
\begin{enumerate}
\item{the sequence $\{ f(x_k) \}$ is strictly decreasing and converges to the global minimum of $f$ (in particular,
$|f(x_{k + 1}) - f(x_k)| \to 0$ as $k \to \infty$);}

\item{$\sum_{i = 0}^{\infty} \| (a_k, v_k) \|^2 < + \infty$ and $\dist(0, \underline{d} f(x_k)) \to 0$ as 
$k \to \infty$;}

\item{for all $k \in \mathbb{N}$ the following inequality holds true:
\begin{equation} \label{eq:MHD_RateOfConvergence}
  f(x_k) - f_* 
  \le \frac{(f(x_0) - f_*)(1 + R)^2}{(1 + R)^2 + (f(x_0) - f_*) \left( \alpha - \frac{L \alpha^2}{2} \right) k}
  = \mathcal{O}\left(\frac{1}{k}\right)
\end{equation}
where $R = \sup\{ \| y - x_* \| \mid y \in S(x_0) \}$, $x_*$ is any point of global minimum of $f$, and $f_* = f(x_*)$.
}
\end{enumerate}
Moreover, one can define $\widehat{\alpha} = \min\{ 2/L, \min\{ 1, \varepsilon \}/C \}$, where $C > 0$ is such that 
$|a| \le C$ and $\| v \| \le C$ for any $(a, v) \in \underline{d} f(x)$ and $x \in S(x_0)$.
\end{theorem}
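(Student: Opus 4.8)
The plan is to establish a per-iteration descent inequality of the form $f(x_{k+1}) - f(x_k) \le -(\alpha - L\alpha^2/2)\|(a_k, v_k)\|^2$ and then combine it with a lower bound on $\|(a_k, v_k)\|$ coming from consistency. First I would record the optimality condition for the projection subproblem~\eqref{prob:DistToHypodiff}: since $(a_k, v_k)$ is the element of the convex set $\underline{d} f(x_k)$ nearest to the origin, one has $a_k a + \langle v_k, v\rangle \ge a_k^2 + \|v_k\|^2$ for every $(a, v) \in \underline{d} f(x_k)$. Using that $\underline{d} f(x_k)$ is a hypodifferential (so $a \le 0$ for all its points) and that $\alpha \le 1/C$ forces $1 + \alpha a_k \ge 0$, this inequality rearranges to $a - \alpha\langle v, v_k\rangle \le -\alpha\|(a_k, v_k)\|^2$ for every $(a, v)$, hence the same bound holds for the maximum over $\underline{d} f(x_k)$.

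Next I would invoke the Lipschitzian approximation property with $y = x_{k+1} = x_k - \alpha v_k$ and $x = x_k$, which gives $f(x_{k+1}) - f(x_k) \le \max_{(a,v) \in \underline{d} f(x_k)}(a - \alpha\langle v, v_k\rangle) + \frac{L\alpha^2}{2}\|v_k\|^2$; combining with the bound above and $\|v_k\|^2 \le \|(a_k, v_k)\|^2$ produces the descent inequality with $\gamma := \alpha - L\alpha^2/2 > 0$ (positivity being exactly $\alpha < 2/L$). A delicate point is that the Lipschitzian approximation is only assumed on $S_{\varepsilon}(x_0)$, so one must check that the relevant iterates lie there. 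I would handle this by induction: if $x_k \in S(x_0)$, then $\|v_k\| \le C$ by boundedness of the hypodifferential on $S(x_0)$, so $\|x_{k+1} - x_k\| = \alpha\|v_k\| \le \alpha C \le \varepsilon$ and thus $x_{k+1} \in S_{\varepsilon}(x_0)$ (which is convex, being the $\varepsilon$-neighbourhood of the convex set $S(x_0)$); the descent inequality then gives $f(x_{k+1}) \le f(x_k) \le f(x_0)$, i.e. $x_{k+1} \in S(x_0)$, closing the induction. This is precisely where the three constraints defining $\widehat{\alpha} = \min\{2/L, \min\{1,\varepsilon\}/C\}$ are each used.

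With the descent inequality in hand, the three statements follow by standard arguments. Strict monotonicity of $\{f(x_k)\}$ holds whenever $\|(a_k, v_k)\| > 0$; if instead $\|(a_k, v_k)\| = 0$ then $0 \in \underline{d} f(x_k)$ and $x_k$ already minimizes $f$. Summing the descent inequality telescopically over $k$ gives $\gamma\sum_k \|(a_k, v_k)\|^2 \le f(x_0) - f_*$, which yields Statement~2 (summability and $\dist(0, \underline{d} f(x_k)) \to 0$). For the rate I would use consistency at a minimizer $x_*$: from $f(x_*) - f(x_k) \ge a_k + \langle v_k, x_* - x_k\rangle$ one gets $f(x_k) - f_* \le |a_k| + \|v_k\|R \le \sqrt{1+R^2}\,\|(a_k, v_k)\| \le (1+R)\|(a_k, v_k)\|$ by Cauchy--Schwarz, where $R = \sup\{\|y - x_*\| \mid y \in S(x_0)\}$ is finite since $S(x_0)$ is bounded, and $x_*$ exists because $f$ attains its minimum on the weakly compact set $S(x_0)$.

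Substituting the resulting lower bound $\|(a_k, v_k)\|^2 \ge (f(x_k) - f_*)^2/(1+R)^2$ into the descent inequality gives, for $\delta_k := f(x_k) - f_*$, the recursion $\delta_{k+1} \le \delta_k - \frac{\gamma}{(1+R)^2}\delta_k^2$. Dividing through and using $\delta_{k+1} \le \delta_k$ gives $1/\delta_{k+1} - 1/\delta_k \ge \gamma/(1+R)^2$, so $1/\delta_k \ge 1/\delta_0 + \gamma k/(1+R)^2$, which rearranges to exactly~\eqref{eq:MHD_RateOfConvergence} and shows $f(x_k) \to f_*$, completing Statement~1. The main obstacle I anticipate is the very first step, namely extracting the clean bound $\max_{(a,v)}(a - \alpha\langle v, v_k\rangle) \le -\alpha\|(a_k, v_k)\|^2$, since it is here that the minimum-norm choice of $(a_k, v_k)$, the sign constraint $a \le 0$, and the upper bound $\alpha \le 1/C$ must all be combined correctly; the remaining work is essentially bookkeeping built on top of the descent inequality.
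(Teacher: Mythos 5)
Your proposal is correct and follows essentially the same route as the paper's proof: minimum-norm optimality condition, the descent inequality $f(x_{k+1}) - f(x_k) \le -(\alpha - L\alpha^2/2)\|(a_k,v_k)\|^2$ obtained via the Lipschitzian approximation on $S_\varepsilon(x_0)$ with the same induction keeping iterates in $S(x_0)$, the consistency bound $f(x_k) - f_* \le (1+R)\|(a_k,v_k)\|$, and the standard $1/\Delta_k$ recursion. The only (harmless) deviation is in extracting $\max_{(a,v)}(a - \alpha\langle v, v_k\rangle) \le -\alpha\|(a_k,v_k)\|^2$: you do it in one algebraic step using $a(1+\alpha a_k) \le 0$, whereas the paper splits into the cases $a_k = 0$ and $a_k < 0$ and interpolates by convexity of $\Phi_k$ over $[0, \beta_k]$ --- both arguments use the same ingredients (the sign constraint $a \le 0$ and the step-size bound $\alpha \le 1/C$) and yield the identical inequality.
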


\begin{proof}
Note that under the assumptions of the theorem the function $f$ attains a global minimum, since $X$ is a Hilbert space
and the sublevel set $S(x_0)$ is bounded.

Let $\{ x_k \}$ be the sequence generated by the method of hypodifferential descent with $\alpha_k \equiv \alpha$ for
some $\alpha > 0$. Denote $\Phi_k(y) = \max_{(a, v) \in \underline{d} f(x_k)} (a + \langle v, y \rangle)$. 
For the sake of convenience, we divide the proof of the theorem into several parts.

\textbf{Part 1.} Recall that $(a_k, v_k)$ is an optimal solution of subproblem \eqref{prob:DistToHypodiff}. Applying 
the standard optimality conditions to this subproblem one gets
\begin{equation} \label{eq:HypodiffDist_OptCond}
  a_k (a - a_k) + \langle v_k, v - v_k \rangle \ge 0 \quad \forall (a, v) \in \underline{d} f(x_k).
\end{equation}
If $a_k = 0$, then bearing in mind the fact that $a \le 0$ for any $(a, v) \in \underline{d} f(x_k)$ one obtains
\[
  \Phi_k(- v_k) \le \max_{(a, v) \in \underline{d} f(x_k)} \langle v, - v_k \rangle \le - \| v_k \|^2
  = - \| (a_k, v_k) \|^2.
\]
In turn, if $a_k < 0$, then dividing \eqref{eq:HypodiffDist_OptCond} by $a_k$ and taking the maximum over all 
$(a, v) \in \underline{d} f(x_k)$ one gets 
\[
  \Phi_k\left( \frac{1}{a_k} v_k \right) 
  = \max_{(a, v) \in \underline{d} f(x_k)} \left( a + \frac{1}{a_k} \langle v, v_k \rangle \right)
  \le \frac{1}{a_k} \| (a_k, v_k) \|^2.
\]
Thus, in either case one has $\Phi_k(- \beta_k v_k) \le - \beta_k \| (a_k, v_k) \|^2$, where $\beta_k = 1 / |a_k|$, if
$a_k < 0$, and $\beta_k = 1$, otherwise. Hence taking into account the fact that by the definition of hypodifferential
$\Phi_k(0) = 0$ one obtains that
\[
  \Phi_k(- t \beta_k v_k) \le t \Phi_k( - \beta_k v_k) \le - t \beta_k \| (a_k, v_k) \|^2
  \quad \forall t \in [0, 1], 
\]
thanks to the convexity of $\Phi_k$, or, equivalently,
\begin{equation} \label{eq:MHD_MainIneq}
  \Phi_k(- t v_k) \le - t \| (a_k, v_k) \|^2 \quad \forall t \in [0, \beta_k]
\end{equation}
for any $k \in \mathbb{N}$.

\textbf{Part 2.} Let us show that there exists $\widehat{\alpha} > 0$ such that for any 
$\alpha \in (0, \widehat{\alpha}]$ one has $\{ x_k \} \subset S(x_0)$ and the sequence $\{ f(x_k) \}$ is strictly
decreasing. Indeed, by our assumption the hypodifferential map $\underline{d} f(\cdot)$ is bounded on $S(x_0)$.
Therefore, there exists $C > 0$ such that $|a| \le C$ and $\| v \| \le C$ for any $(a, v) \in \underline{d} f(x)$ and 
$x \in S(x_0)$. Define
\begin{equation} \label{eq:StepSizeUpperBound}
  \widehat{\alpha} = \min\left\{ \frac{2}{L}, \frac{\min\{ 1, \varepsilon \}}{C} \right\}
\end{equation}
and let $\alpha \in (0, \widehat{\alpha})$.

Suppose that $x_k \in S(x_0)$ for some $k \in \mathbb{N}$. Let us check that $x_{k + 1} \in S(x_0)$ and 
$f(x_{k + 1}) < f(x_k)$. Indeed, since $x_k \in S(x_0)$, one has $|a_k| \le C$ or, equivalently, $\beta_k \ge 1 / C$
(one can suppose that $C \ge 1$), which implies that $\alpha_k = \alpha \le \beta_k$. Moreover, from 
the definition of $\widehat{\alpha}$ it follows that $t \| v_k \| \le \varepsilon$ for any 
$t \in [0, \widehat{\alpha}]$. Consequently, $x_{k + 1} := x_k - \alpha v_k \in S_{\varepsilon}(x_0)$. Therefore,
applying inequality \eqref{eq:MHD_MainIneq} and the fact that the hypodiferential map $\underline{d} f(\cdot)$ is a
Lipschitzian approximation of $f$ on $S_{\varepsilon}(x_0)$ one gets that
\begin{multline*}
  f(x_{k + 1}) \le f(x_k) + \max_{(a, v) \in \underline{d} f(x_k)} (a + \langle v, x_{k + 1} - x_k \rangle)
  + \frac{L}{2} \| x_{k + 1} - x_k \|^2
  \\
   \le f(x_k) - \alpha \| (a_k, v_k) \|^2 + \frac{L \alpha^2}{2} \| v_k \|^2
  \le f(x_k) - \left( \alpha - \frac{L \alpha^2}{2} \right) \| (a_k, v_k) \|^2.
\end{multline*}
Hence taking into account the fact that $\alpha < 2 / L$ (see \eqref{eq:StepSizeUpperBound}) one can conclude that
$f(x_{k + 1}) < f(x_k)$ and $x_{k + 1} \in S(x_0)$. Thus, in particular, the sequence $\{ f(x_k) \}$ converges.

\textbf{Part 3.} Denote $\Delta_k = f(x_k) - f_*$. As we have shown above, one has
\begin{equation} \label{eq:MHD_DeltaDecay}
  \Delta_{k + 1} - \Delta_k \le - \eta \| (a_k, v_k) \|^2 \quad \forall k \in \mathbb{N},
  \quad \eta = \alpha - \frac{L \alpha^2}{2} > 0.
\end{equation}
Hence, in particular, $\Delta_{k + 1} < \Delta_k$ and
\[
  f(x_{k + 1}) - f(x_0) = \sum_{i = 0}^k (\Delta_{i + 1} - \Delta_i)
  \le - \eta \sum_{i = 0}^k \| (a_k, v_k) \|^2 \quad \forall k \in \mathbb{N},
\]
which implies that
\[
  \sum_{i = 0}^{\infty} \| (a_k, v_k) \|^2 \le \frac{1}{\eta} \Big( f(x_0) - \lim_{k \to \infty} f(x_k) \Big)
  \le \frac{1}{\eta} (f(x_0) - f_*).
\]
Thus, the second statement of the theorem holds true.

Recall that the hypodifferential map $\underline{d} f(\cdot)$ is consistent and $(a_k, v_k) \in \underline{d} f(x_k)$.
Therefore
\begin{equation} \label{eq:Delta_UpperEstim}
  \Delta_k \le - a_k + \langle v_k, x_k - x_* \rangle \le \| (a_k, v_k) \| \big( 1 + \| x_k - x_* \| )
  \le (1 + R) \| (a_k, v_k) \|  
\end{equation}
for any $k \in \mathbb{N}$. Combining this inequality with \eqref{eq:MHD_DeltaDecay} one obtains
\[
  \Delta_{k + 1} \le \Delta_k - \frac{\eta}{(1 + R)^2} \Delta_k^2 \quad \forall k \in \mathbb{N}.
\]
Dividing this inequality by $\Delta_{k + 1} \cdot \Delta_k$ and rearranging the terms one gets
\[
  \frac{1}{\Delta_{k + 1}} \ge \frac{1}{\Delta_k} + \frac{\eta}{(1 + R)^2} \frac{\Delta_k}{\Delta_{k + 1}}
  \ge \frac{1}{\Delta_k} + \frac{\eta}{(1 + R)^2} \quad \forall k \in \mathbb{N}
\]
(here we used the fact that $\Delta_{k + 1} < \Delta_k$, that is, $\frac{\Delta_k}{\Delta_{k + 1}} > 1$). Summing up
these inequalities one finally obtains
\[
  \frac{1}{\Delta_{k + 1}} \ge \frac{1}{\Delta_0} + \frac{\eta}{(1 + R)^2} (k + 1) \quad \forall k \in \mathbb{N},
\]
which implies that the estimate \eqref{eq:MHD_RateOfConvergence} holds true.
\end{proof}

\begin{remark}
Note that the assumptions of the previous theorem are not restrictive in the finite dimensional case, since by
Corollary~\ref{crlr:ExistenceThrm_FiniteDim} a hypodifferential mapping satisfying these assumptions exists for any
nonsmooth function $f \colon X \to \mathbb{R}$ having the bounded sublevel set $S(x_0)$.
\end{remark}

Let us show that if the hypodifferential map $\underline{d} f(\cdot)$ is \textit{exact} (global), then one can use 
the first component $a_k$ of an optimal solution of subproblem~\eqref{prob:DistToHypodiff} on Step~1 of the method of
hypodifferential descent to determine a step size that ensures the linear rate of convergence of the method.

\begin{theorem} \label{thm:MHD_ExactCase}
Let the sublevel set $S(x_0)$ be bounded, the hypodifferential map $\underline{d} f(\cdot)$ be exact on
$S_{\varepsilon}(x_0)$ for some $\varepsilon > 0$, and $\{ x_k \}$ be the sequence generated by the method of
hypodifferential descent with $\alpha_k = 1 / a_k$, if $a_k < 0$, and $\alpha_k = 1$, otherwise. Then the following
statements hold true:
\begin{enumerate}
\item{$a_k < 0$ for any $k \in \mathbb{N}$;}

\item{there exists $\gamma \in (0, 1)$ and $C > 0$ such that $\dist(0, \underline{d} f(x_k)) \le C \gamma^k$
for all $k \in \mathbb{N}$;
}

\item{there exists $q \in (0, 1)$ such that $f(x_k) - f_* \le (f(x_0) - f_*)) q^k$ for any $k \in \mathbb{N}$.}
\end{enumerate}
\end{theorem}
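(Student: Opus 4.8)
The plan is to exploit the exactness of the hypodifferential map, which makes the step-size choice $\alpha_k = 1/|a_k|$ essentially optimal for the max-type approximation, and to combine it with the consistency-type lower bound on $\Delta_k = f(x_k) - f_*$ from inequality~\eqref{eq:Delta_UpperEstim}. The key quantitative engine will be inequality~\eqref{eq:MHD_MainIneq}, $\Phi_k(-t v_k) \le -t \| (a_k, v_k) \|^2$ valid for $t \in [0, \beta_k]$ with $\beta_k = 1/|a_k|$ (when $a_k < 0$), established in Part~1 of the proof of Thm.~\ref{thm:MHD_ConvergenceRate}. Since $\underline{d} f$ is exact on $S_\varepsilon(x_0)$, the approximation is not merely an upper bound but an equality, so evaluating $\Phi_k$ along the step gives an \emph{exact} decrease rather than one contaminated by the $\frac{L}{2}\|\cdot\|^2$ term.

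First I would prove statement~1, that $a_k < 0$ for all $k$. The idea is that $a_k = 0$ would force $0 \in \underline{d} f(x_k)$ after combining with the optimality condition, which by the corollary following Thm.~\ref{thrm:HypodiffCharacterization} means $x_k$ is a global minimizer and the method has already converged; one can argue that this cannot happen at a strict intermediate iterate, or simply treat the degenerate case $a_k = 0$ as termination. More carefully, since $\underline{d} f(x_k)$ is exact and $x_k$ is not yet optimal, $\operatorname{dist}(0, \underline{d} f(x_k)) > 0$, and the optimality condition~\eqref{eq:HypodiffDist_OptCond} together with $a \le 0$ throughout $\underline{d} f(x_k)$ forces the minimal-norm element to have strictly negative first coordinate. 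This requires ruling out the pure-gradient case $a_k = 0, v_k \ne 0$ separately, using that at a non-minimizer exactness implies the existence of an element with $a < 0$ that lowers the norm.

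Next, for statements~2 and~3, I would set $t = \alpha_k = 1/|a_k|$ in the exact version of the descent inequality. With exactness, $f(x_{k+1}) - f(x_k) = \Phi_k(-\alpha_k v_k) \le -\alpha_k \|(a_k, v_k)\|^2 = -\frac{\|(a_k,v_k)\|^2}{|a_k|}$; since $|a_k| \le \|(a_k, v_k)\|$, this gives a decrease of at least $\|(a_k, v_k)\|$, i.e. $\Delta_{k+1} \le \Delta_k - \|(a_k, v_k)\|$. I must check that the step stays inside $S_\varepsilon(x_0)$ so that exactness applies, which follows as in Thm.~\ref{thm:MHD_ConvergenceRate} from the boundedness of $\underline{d} f$ on $S(x_0)$. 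Combining the decrease $\Delta_{k+1} \le \Delta_k - \|(a_k,v_k)\|$ with the consistency lower bound $\Delta_k \le (1+R)\|(a_k,v_k)\|$ from~\eqref{eq:Delta_UpperEstim} yields $\|(a_k,v_k)\| \ge \Delta_k/(1+R)$, hence
\[
  \Delta_{k+1} \le \Delta_k - \frac{\Delta_k}{1+R} = \Big(1 - \frac{1}{1+R}\Big)\Delta_k = \frac{R}{1+R}\,\Delta_k,
\]
giving linear convergence with ratio $q = R/(1+R) \in (0,1)$, which is statement~3. Statement~2 then follows from $\operatorname{dist}(0, \underline{d} f(x_k)) = \|(a_k, v_k)\| \le \Delta_k$ (since $|a_k| \le \|(a_k,v_k)\|$ and, from~\eqref{eq:MHD_DeltaDecay}-type reasoning, $\|(a_k,v_k)\| \le \Delta_k - \Delta_{k+1} \le \Delta_k$), so that $\operatorname{dist}(0,\underline{d} f(x_k)) \le \Delta_0\, q^k$, establishing the claim with $\gamma = q$ and $C = \Delta_0$.

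The main obstacle I anticipate is the rigorous handling of statement~1 and the exact decrease step together: the clean bound $\Delta_{k+1} \le \Delta_k - \|(a_k,v_k)\|$ relies on taking $t = 1/|a_k|$ in~\eqref{eq:MHD_MainIneq}, which is only legitimate when $a_k < 0$ \emph{and} when $-\alpha_k v_k$ keeps the iterate within the exactness region $S_\varepsilon(x_0)$; one must verify the step does not overshoot $\varepsilon$, controlling $\alpha_k \|v_k\| = \|v_k\|/|a_k|$, which is delicate precisely when $|a_k|$ is small. Establishing that $|a_k|$ stays bounded away from zero relative to $\|v_k\|$, or instead arguing that small $|a_k|$ already signals near-optimality with a controllable step, is the crux; the rest is the elementary linear-recursion manipulation above.
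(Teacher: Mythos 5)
Your quantitative core is the same as the paper's: from the exact decrease you get $\Delta_{k+1}\le \Delta_k - \tfrac{1}{|a_k|}\|(a_k,v_k)\|^2 \le \Delta_k - \|(a_k,v_k)\|$ (using $|a_k|\le\|(a_k,v_k)\|$), and combining with \eqref{eq:Delta_UpperEstim} gives $q=1-\tfrac{1}{1+R}$ and then statement~2. But the two issues you yourself flag as "the crux" are genuine gaps, and the routes you sketch for closing them would not work. For statement~1, your first suggestion is simply false: $a_k=0$ does not force $0\in\underline{d} f(x_k)$ unless also $v_k=0$, and your second suggestion (that exactness at a non-minimizer produces an element with $a<0$ of smaller norm) is asserted without justification and is not how the argument goes. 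The paper's argument is different: if $a_k=0$, the optimality condition \eqref{eq:HypodiffDist_OptCond} together with exactness gives $f(x_k-\alpha v_k)-f(x_k)=\Phi_k(-\alpha v_k)\le -\alpha\|v_k\|^2$ for every $\alpha\ge 0$ with $x_k-\alpha v_k\in S_\varepsilon(x_0)$; one then shows the \emph{entire ray} stays in $S_\varepsilon(x_0)$ (if it first exited at $\overline{\alpha}$, the value there would already be $<f(x_0)$, so by continuity of $f$ the ray remains in $S_\varepsilon(x_0)$ slightly beyond $\overline{\alpha}$, a contradiction), whence $f$ is unbounded below, contradicting the fact that $f$ attains a global minimum because $S(x_0)$ is bounded.

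The second gap is the containment of the step in the exactness region. You propose to control $\alpha_k\|v_k\|=\|v_k\|/|a_k|$ by $\varepsilon$, i.e.\ to bound $|a_k|$ away from zero relative to $\|v_k\|$; no such bound is available, and the constant-step argument of Thm.~\ref{thm:MHD_ConvergenceRate} (which you invoke) does not transfer, precisely because here the step length is unbounded as $|a_k|\to 0$. The paper sidesteps any bound on the step length: along the segment $x_k+(\alpha/a_k)v_k$, $\alpha\in[0,1]$, convexity of $\Phi_k$ and the optimality condition give $\Phi_k\bigl((\alpha/a_k)v_k\bigr)\le(\alpha/a_k)\|(a_k,v_k)\|^2<0$, so by exactness $f$ strictly decreases along the segment; the same first-exit-point argument as above then shows the whole segment lies in $S_\varepsilon(x_0)$ (indeed the endpoint lies in $S(x_0)$, which is also what legitimizes using \eqref{eq:Delta_UpperEstim} with the constant $R$ at every iteration). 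Without these two arguments your proof does not close; with them it coincides with the paper's.
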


\begin{proof}
We use the same notation as in the proof of Theorem~\ref{thm:MHD_ConvergenceRate}. For the sake of convenience, we
divide the proof of this theorem into several parts.

\textbf{Part 1.} Let us show that if $x_k \in S(x_0)$, then $a_k < 0$. Suppose by contradiction that $x_k \in S(x_0)$,
but $a_k = 0$ for some  $k \in \mathbb{N}$. Then from the optimality conditions \eqref{eq:HypodiffDist_OptCond} and 
the fact that the hypodifferential map $\underline{d} f(\cdot)$ is exact on $S_{\varepsilon}(x_0)$ it follows that
\begin{equation} \label{eq:MHD_Zero_a_k}
\begin{split}
  f(x_k - \alpha v_k) - f(x_k) = \Phi_k(- \alpha v_k)
  &= \max_{(a, v) \in \underline{d} f(x_k)} (a + \langle v, - \alpha v_k \rangle)
  \\
  &\le \alpha \max_{(a, v) \in \underline{d} f(x_k)} \langle v, - v_k \rangle \le - \alpha \| v_k \|^2
\end{split}
\end{equation}
for any $\alpha \ge 0$ such that $x_k(\alpha) := x_k - \alpha v_k \in S_{\varepsilon}(x_0)$. Let us show that 
$x_k(\alpha) \in S_{\varepsilon}(x_0)$ for any $\alpha \ge 0$. Then $f(x_k(\alpha)) \to - \infty$ as 
$\alpha \to + \infty$, which contradicts the fact that the function $f$ attains global minimum, since its sublevel set
$S(x_0)$ is bounded.

Indeed, note that $x_k(\alpha) \in S_{\varepsilon}(x_0)$ for any $\alpha \in [0, \varepsilon/ \| v_k \|]$. With the use
of the fact that the sublevel set $S(x_0)$ is convex one can easily check that the set $S_{\varepsilon}(x_0)$ is convex
as well. Therefore, either the ray $\{ x_k - \alpha v_k \mid \alpha \ge 0 \}$ is contained in $S_{\varepsilon}(x_0)$ or
there exists $\overline{\alpha} > 0$ such that
\[
  \big\{ x_k - \alpha v_k \bigm| \alpha \in [0, \overline{\alpha}] \} \subset S_{\varepsilon}(x_0), \quad
  x_k - \alpha v_k \notin S_{\varepsilon}(x_0) \quad \forall \alpha > \overline{\alpha}.
\]
Note, however, that in the latter case one has $f(x_k(\overline{\alpha})) < f(x_k) \le f(x_0)$ due to inequality
\eqref{eq:MHD_Zero_a_k}. Consequently, there exists $\delta > 0$ such that $f(x_k(\alpha)) \le f(x_0)$ for any
$\alpha \in [\overline{\alpha}, \overline{\alpha} + \delta]$ by virtue of the fact that $f$ is continuous by
\cite[Corollary~I.2.5]{EkelandTemam}. Thus, the second case is impossible and we can conclude that $a_k < 0$, whenever 
$x_k \in S(x_0)$.

\textbf{Part 2.} Let us show that $\{ x_k \} \subset S(x_0)$ and the sequence $\{ f(x_k) \}$ is strictly decreasing.
Indeed, let $x_k \in S(x_0)$ for some $k \in \mathbb{N}$. Then $a_k < 0$ and from the optimality conditions
\eqref{eq:HypodiffDist_OptCond} and the convexity of $\Phi_k$ it follows that
\[
  \Phi_k\left( \frac{\alpha}{a_k} v_k \right) \le \alpha \Phi_k\left( \frac{1}{a_k} v_k \right)
  \le \frac{\alpha}{a_k} \| (a_k, v_k) \|^2 < 0
\]
for any $\alpha \in [0, 1]$. Therefore applying the fact that the hypodifferential map $\underline{d} f(\cdot)$ is
exact on $S_{\varepsilon}(x_0)$ one obtains that
\[
  f\left(x_k + \frac{\alpha}{a_k} v_k \right) = f(x_k) + \Phi_k\left( \frac{\alpha}{a_k} v_k \right)
  \le f(x_k) + \frac{\alpha}{a_k} \| (a_k, v_k) \|^2
\]
for any $\alpha \in [0, 1]$ such that $x_k + (\alpha/a_k) v_k \in S_{\varepsilon}(x_k)$. Arguing in the same way as in
the first part of the proof one can show that this inclusion is satisfied for any $\alpha \in [0, 1]$. Consequently,
putting $\alpha = 1$ one gets
\begin{equation} \label{eq:MHD_a_k_StepSize_Decay}
  f(x_{k + 1}) \le f(x_k) + \frac{1}{a_k} \| (a_k, v_k) \|^2 < f(x_k).
\end{equation}
Therefore, $\{ x_k \} \subset S(x_0)$ and the sequence $\{ f(x_k) \}$ is strictly decreasing. 

\textbf{Part 3.} From inequality \eqref{eq:MHD_a_k_StepSize_Decay} it follows that
\begin{equation} \label{eq:DeltaDecay_a_k_StepSize}
  \Delta_{k + 1} - \Delta_k \le - \frac{1}{|a_k|} \| (a_k, v_k) \|^2
  \le - \frac{1}{\| (a_k, v_k) \|} \| (a_k, v_k) \|^2 = - \| (a_k, v_k) \|
\end{equation}
for any $k \in \mathbb{N}$. Hence applying inequality \eqref{eq:Delta_UpperEstim} one obtains
\[
  \Delta_{k + 1} \le \left( 1 - \frac{1}{1 + R} \right) \Delta_k \quad \forall k \in \mathbb{N},
\]
which implies that $\Delta_{k + 1} \le \Delta_0 q^{k + 1}$ for any $k \in \mathbb{N}$, where 
$q = 1 - \frac{1}{1 + R} \in (0, 1)$. Thus, the third statement of the theorem holds true. To prove the second
statement of the theorem, note that from inequality \eqref{eq:DeltaDecay_a_k_StepSize} it follows that
\[
  \dist(0, \underline{d} f(x_k)) = \| (a_k, v_k) \|
  \le \Delta_k - \Delta_{k + 1} \le \Delta_0 q^k - \Delta_0 q^{k + 1} = \Delta_0 (1 - q) q^k
\]
for any $k \in \mathbb{N}$. Thus, the second statement of the theorem holds true with $C = \Delta_0 (1 - q)$ and
$\gamma = q$.
\end{proof}

\begin{remark}
{(i)~If under the assumptions of the previous theorem the function $f$ is polyhedral, then by
\cite[Theorem~4.7]{Dolgopolik_HypodiffDescent} the method of hypodifferential descent with $\alpha_k = 1 / a_k$
converges to a point of global minimum of $f$ in a finite number of steps.
}

\noindent{(ii)~The previous theorem remains to hold true if the step sizes $\alpha_k$ are defined via the exact line
search:
\[
  f(x_k - \alpha_k v_k) = \min_{\alpha \ge 0} f(x_k - \alpha v_k).
\]
Indeed, in this case one obviously has
\[
  \Delta_{k + 1} - \Delta_k = \min_{\alpha \ge 0} f(x_k - \alpha v_k) - f(x_k)
  \le f\left( x_k + \frac{1}{a_k} v_k \right) - f(x_k) \le \frac{1}{a_k} \| (a_k, v_k) \|^2  
\]
(see \eqref{eq:MHD_a_k_StepSize_Decay}). Then arguing in the same way as in the third part of the proof of
Theorem~\ref{thm:MHD_ExactCase} one can prove the linear rate of convergence of the method.
}
\end{remark}

\subsection{Accelerated proximal hypodifferential descent}
\label{subsect:AcceleratedPHD}

Let us consider a different method for minimizing nonsmooth convex functions based on the use of hypodifferentials,
which we call \textit{the proximal hypodifferential descent method}. This method is related to various proximal point
methods \cite{Teboulle} and can be viewed as a generalisation of the Pschenichnyi-Pironneau-Polak algorithm for
convex minimax problems \cite[Algorithm~2.4.1]{Polak} (see also \cite[Section~2.3.3]{Nesterov_book}) to the case of
general nonsmooth convex functions, as well as a particular version of the quadratic regularisation of the method of
codifferential descent for nonconvex nonsmooth functions \cite{Dolgopolik_CodiffDescent}. Moreover, unlike the method of
hypodifferential descent from the previous section, the proximal hypodifferential descent can be applied to problems of
minimizing the function $f$ over a closed convex set $Q \subseteq X$. A theoretical scheme of this method is given in
Algorithm~\ref{alg:ProximalHypodiffDescent}.

\begin{algorithm} \label{alg:ProximalHypodiffDescent}
\caption{Proximal hypodifferential descent method.}

\textbf{Initialisation.} Choose an initial point $x_0 \in Q$, a paramter $\gamma > 0$, and put $k = 0$.

\textbf{Step 1.} Compute $\underline{d} f(x_k)$. Find an optimal solution $z_k$ of the problem
\begin{equation} \label{prob:ProximalHypodiff}
  \min_{x \in Q} \enspace \max_{(a, v) \in \underline{d} f(x_k)} (a + \langle v, x - x_k \rangle)
  + \frac{\gamma}{2} \| x - x_k \|^2.
\end{equation}

\textbf{Step 2.} Choose a step size $\alpha_k \in (0, 1]$ and put $x_{k + 1} = x_k + \alpha_k (z_k - x_k)$.

\textbf{Step 3.} Check a \textbf{stopping criterion}. If it is satisfied, \textbf{Stop}. Otherwise, 
put $k \leftarrow k + 1$ and go to \textbf{Step 1}.
\end{algorithm}

Let us comment on the proximal hypodifferential descent method. Firstly, note that the point $z_k$ is correctly defined
for any $k \in \mathbb{N}$. Indeed, it is easily seen that the function 
$\Psi_k(x) := \Phi_k(x - x_k) + 0.5 \gamma \| x - x_k \|^2$, $x \in X$, is strongly convex, where, as above, 
\[
  \Phi_k(x) = \max_{(a, v) \in \underline{d} f(x_k)} (a + \langle v, x \rangle) \quad \forall x \in X.
\]
Moreover, this function is coercive, since $|\Phi_k(x)| \le C_k + C_k \| x \|$ for all $x \in X$ and some $C_k > 0$ due
to the fact that hypodifferential is a bounded set. Therefore, the function $\Psi_k(\cdot)$ attain a global minimum on
$Q$ at a unique point $z_k$ (see, e.g. \cite[Proposition~II.1.2]{EkelandTemam}). 

If the set $Q$ is polyhedral (that is, it is defined by a finite number of linear equality and inequality
constraints) and the hypodifferential $\underline{d} f(x_k)$ is a polytope, then the auxiliary subproblem
\eqref{prob:ProximalHypodiff} on Step~1 of the proximal hypodifferential descent method can obviously be rewritten
as an equivalent convex quadratic programming problem. Let us also point out that the following inequalities
\begin{equation} \label{eq:PHD_StopCriterion}
  \| z_k - x_k \| \le \varepsilon \quad \text{and/or} \quad f(x_k) - f(x_{k + 1}) \le \varepsilon
\end{equation}
with small $\varepsilon > 0$ can be used as a stopping criterion for Algorithm~\ref{alg:ProximalHypodiffDescent}. 

The correctness of the stopping criteria \eqref{eq:PHD_StopCriterion}, as well as global convergence of the proximal
hypodifferential descent method can be proved in exactly the same way as the analogous results for QR-MCD method from
\cite[Section~4]{Dolgopolik_CodiffDescent}, since Algorithm~\ref{alg:ProximalHypodiffDescent} is a particular case of
QR-MCD in the convex case. In turn, the rate of convergence of the proximal hypodifferential descent method can be
estimated in the strongly convex case in exactly the same way as the rate of convergence of similar methods for convex
minimax problems in \cite[Theorem~2.3.4]{Nesterov_book} and \cite[Theorem~2.4.5]{Polak}. For the sake of shortness, we
do not present detailed formulations and proofs of these results here. Instead, we will analyse an accelerated version
of Algorithm~\ref{alg:ProximalHypodiffDescent} designed with the use of Nesterov's acceleration technique based on
estimating sequences \cite[Sections~2.2 and 2.3]{Nesterov_book}.

For any $x \in Q$ let $z_{\gamma}(x)$ be an optimal solution of the problem
\[
  \min_{z \in Q} \enspace \max_{(a, v) \in \underline{d} f(x)}(a + \langle v, z - x \rangle) 
  + \frac{\gamma}{2} \| z - x \|^2.
\]
Let us obtain one useful estimate.

\begin{lemma} \label{lem:ProximalLowerEstimate}
Let the hypodifferential map $\underline{d} f(\cdot)$ be consistent on $X$ and a Lipschitzian approximation of $f$ on
$X$ with Lipschitz constant $L > 0$. Then for any $\gamma \ge L$, $x \in Q$, and $y \in X$ one has 
\[
  f(x) \ge f(z_{\gamma}(y)) + \gamma \langle y - z_{\gamma}(y), x - y \rangle
  + \frac{\gamma}{2} \| y - z_{\gamma}(y) \|^2.
\]
\end{lemma}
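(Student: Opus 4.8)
The plan is to exploit three properties of the hypodifferential map at the point $y$, writing $z := z_{\gamma}(y)$ and $\Phi_y(w) := \max_{(a, v) \in \underline{d} f(y)} (a + \langle v, w - y \rangle)$ for brevity, so that $z$ is the minimizer over $Q$ of the function $\Psi_y(w) := \Phi_y(w) + \frac{\gamma}{2} \| w - y \|^2$. First, the consistency of $\underline{d} f$ on $X$ gives the lower bound $f(x) \ge f(y) + \Phi_y(x)$ for the target point $x \in Q$. Second, the Lipschitzian approximation property yields the one-sided estimate $\Phi_y(z) \ge f(z) - f(y) - \frac{L}{2} \| z - y \|^2$. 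Third, since $\Phi_y$ is convex (a maximum of affine functions), the regularised objective $\Psi_y$ is $\gamma$-strongly convex; as $x \in Q$ and $z$ minimises $\Psi_y$ over $Q$, I would apply the standard constrained-minimiser inequality $\Psi_y(x) \ge \Psi_y(z) + \frac{\gamma}{2} \| x - z \|^2$, which follows from a convex-combination argument and letting the interpolation parameter tend to $0$.

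Rearranging the third inequality and cancelling the common term $\Phi_y(z)$ gives $\Phi_y(x) \ge \Phi_y(z) + \frac{\gamma}{2}\big( \| z - y \|^2 + \| x - z \|^2 - \| x - y \|^2 \big)$. Expanding $\| x - z \|^2 = \| x - y \|^2 - 2 \langle x - y, z - y \rangle + \| z - y \|^2$, the quadratic bracket collapses to $2 \| z - y \|^2 - 2 \langle x - y, z - y \rangle$, so that
\[
  \Phi_y(x) \ge \Phi_y(z) + \gamma \| y - z \|^2 + \gamma \langle y - z, x - y \rangle .
\]

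Chaining this with the consistency bound and then the Lipschitzian approximation estimate for $\Phi_y(z)$ yields $f(x) \ge f(z) + \big( \gamma - \tfrac{L}{2} \big) \| y - z \|^2 + \gamma \langle y - z, x - y \rangle$. The hypothesis $\gamma \ge L$ is used exactly here: it forces $\gamma - \frac{L}{2} \ge \frac{\gamma}{2}$, which converts the coefficient of $\| y - z \|^2$ into $\frac{\gamma}{2}$ and delivers the claimed inequality. I expect the only non-routine points to be the verification of the constrained strong-convexity inequality and the careful bookkeeping of the three quadratic terms, arranged so that the threshold $\gamma \ge L$ emerges precisely at the final step rather than being imposed earlier.
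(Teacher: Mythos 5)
Your proof is correct and follows essentially the same route as the paper's: both rest on the $\gamma$-strong convexity of the regularised model $\Psi_y$ at its constrained minimiser $z_\gamma(y)$, the Lipschitzian approximation bound at $z_\gamma(y)$, and consistency at $x$. The only cosmetic difference is where $\gamma \ge L$ is invoked (the paper absorbs $\tfrac{L}{2}\|z_\gamma(y)-y\|^2$ into $\Psi$ early on, while you carry the coefficient $\gamma - \tfrac{L}{2}$ to the end), which does not change the argument.
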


\begin{proof}
Fix any $y \in X$ and denote
\[
  \Phi(x) = \max_{(a, v) \in \underline{d} f(y)}(a + \langle v, x - y \rangle), \quad
  \Psi(x) = \Phi(x) + \frac{\gamma}{2} \| x - y \|^2 \quad \forall x \in X.
\]
Clearly, the function $\Psi$ is strongly convex with modulus $\mu = \gamma$. Therefore
\[
  \Psi(x) \ge \Psi(z_{\gamma}(y)) + \frac{\gamma}{2} \| x - z_{\gamma}(y) \|^2 \quad \forall x \in Q,
\]
(see, e.g. \cite[Proposition~4.8]{Vial}). Hence for any $x \in Q$ one has
\begin{align*}
  \Phi(x) &= \Psi(x) - \frac{\gamma}{2} \| x - y \|^2
  \ge \Psi(z_{\gamma}(y)) + \frac{\gamma}{2} \Big( \| x - z_{\gamma}(y) \|^2 - \| x - y \|^2 \Big)
  \\
  &= \Psi(z_{\gamma}(y)) + \frac{\gamma}{2} \langle y - z_{\gamma}(y), 2x - z_{\gamma}(y) - y \rangle
  \\
  &= \Psi(z_{\gamma}(y)) + \frac{\gamma}{2} \langle y - z_{\gamma}(y), 2 (x - y) + y - z_{\gamma}(y) \rangle 
  \\
  &= \Psi(z_{\gamma}(y)) + \gamma \langle y - z_{\gamma}(y), x - y \rangle 
  + \frac{\gamma}{2} \| y - z_{\gamma}(y) \|^2.
\end{align*}
From the facts that $\underline{d} f(\cdot)$ is a Lipschitzian approximation of $f$ on $X$ with Lipschitz constant 
$L > 0$ and $\gamma \ge L$ it follows that
\[
  f(z_{\gamma}(y)) \le f(y) + \Phi(z_{\gamma}(y)) + \frac{L}{2} \| z_{\gamma}(y) - y \|^2
  = f(y) + \Psi(z_{\gamma}(y)). 
\]
Hence taking into account the fact that the hypodifferential map $\underline{d} f(\cdot)$ is consistent one obtains
\begin{align*}
  f(x) &\ge f(y) + \Phi(x) 
  \\
  &\ge f(y) + \Psi(z_{\gamma}(y)) + \gamma \langle y - z_{\gamma}(y), x - y \rangle 
  + \frac{\gamma}{2} \| y - z_{\gamma}(y) \|^2
  \\
  &\ge f(z_{\gamma}(y)) + \gamma \langle y - z_{\gamma}(y), x - y \rangle 
  + \frac{\gamma}{2} \| y - z_{\gamma}(y) \|^2
\end{align*}
for any $x \in Q$.
\end{proof}

Suppose that the hypodifferential map $\underline{d} f(\cdot)$ is a Lipschitzian approximation of $f$ on $X$ with
Lipschitz constant $L > 0$. Following the derivation of Nesterov's accelerated method for convex minimax problems
\cite[Section~2.3.3]{Nesterov_book}, choose some $x_0 \in Q$ and $\gamma_0 > 0$. Let $\{ y_k \} \subset Q$ and 
$\{ \alpha_k \} \subset (0, 1)$ be some sequences. For any $k \in \mathbb{N}$ define
\begin{align*}
  \phi_0(x) &= f(x_0) + \frac{\gamma_0}{2} \| x - x_0 \|^2,
  \\
  \phi_{k + 1}(x) &= \begin{aligned}[t]
    &(1 - \alpha_k) \phi_k(x) + \alpha_k \left( f(z_L(y_k)) + \frac{L}{2} \| y_k - z_L(y_k) \|^2 \right)
    \\
    &+ \alpha_k L \langle y_k - z_L(y_k), x - y_k \rangle.
  \end{aligned}
\end{align*}
Since $\phi_0$ is a quadratic function and $\phi_{k + 1}$ is the sum of $(1 - \alpha_k) \phi_k(x)$ and an affine
function, the functions $\phi_k$ are quadratic for all $k \in \mathbb{N}$. Moreover, one can readily check that
\[
  \phi_k(x) = \phi_k^* + \frac{\gamma_k}{2} \| x - v_k \|^2 \quad \forall k \in \mathbb{N},
\]
where 
\begin{align} \label{eq:APHD_gamma_k}
  v_0 &= x_0, \quad \phi_0^* = f(x_0), \quad \gamma_{k + 1} = (1 - \alpha_k) \gamma_k,
  \\ \label{eq:APHD_v_k}
  v_{k + 1} &= v_k - \frac{\alpha_k L}{\gamma_{k + 1}} (y_k - z_L(y_k)),
  \\ \label{eq:APHD_phi_k_min}
  \phi_{k + 1}^* &= \begin{aligned}[t]
    &(1 - \alpha_k) \phi_k^* + \alpha_k \left( f(z_L(y_k)) + \frac{L}{2} \| y_k - z_L(y_k) \|^2 \right)
    \\
    &- \frac{\alpha_k^2 L^2}{2 \gamma_{k + 1}} \| y_k - z_L(y_k) \|^2 
    + \alpha_k L \langle y_k - z_L(y_k), v_k - y_k \rangle
  \end{aligned}
\end{align}
for any $k \in \mathbb{N}$. With the use of the above expression for the functions $\phi_k$ we can derive an accelerated
version of the proximal hypodifferential descent method.

\begin{lemma} \label{lem:MethodDerivation}
Let $\alpha_0 \in (0, 1)$, $\gamma_0 = L \alpha_0^2$ and sequences $\{ x_k \}$, $\{ y_k \}$, and 
$\{ \alpha_k \}_{k \ge 1}$ be chosen according to the following equalities:
\begin{equation} \label{eq:OptimalSteps}
  x_{k + 1} = z_L(y_k), \quad L \alpha_k^2 = \gamma_{k + 1}, \quad 
  y_k = \alpha_k v_k + (1 - \alpha_k) x_k.
\end{equation}
Then for all $k \in \mathbb{N}$ one has 
\[
  \alpha_{k + 1}^2 = (1 - \alpha_{k + 1}) \alpha_k^2, \quad \alpha_k \in (0, 1), \quad \phi_k^* \ge f(x_k).
\]
\end{lemma}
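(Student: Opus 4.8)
The plan is to establish the three assertions by induction on $k$, since the recursion for $\alpha_k$, the membership $\alpha_k \in (0,1)$, and the key estimate $\phi_k^* \ge f(x_k)$ are coupled through the relation $\gamma_{k+1} = (1-\alpha_k)\gamma_k$ in \eqref{eq:APHD_gamma_k}, the closed form \eqref{eq:APHD_phi_k_min} for $\phi_{k+1}^*$, and the choices \eqref{eq:OptimalSteps}. The first two assertions are essentially algebraic consequences of the way the parameters are coupled, whereas the inequality $\phi_k^* \ge f(x_k)$ is the substantive part and is where Lemma~\ref{lem:ProximalLowerEstimate} enters.

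First I would dispose of the recursion $\alpha_{k+1}^2 = (1-\alpha_{k+1})\alpha_k^2$. Combining the identity $\gamma_{k+1} = (1-\alpha_k)\gamma_k$ with the defining relation $L\alpha_k^2 = \gamma_{k+1}$ of \eqref{eq:OptimalSteps}, applied both at index $k$ and at index $k+1$, eliminates the scalars $\gamma_k$ entirely: $L\alpha_{k+1}^2 = \gamma_{k+2} = (1-\alpha_{k+1})\gamma_{k+1} = (1-\alpha_{k+1})L\alpha_k^2$, and dividing by $L$ gives the claim. For the membership $\alpha_k \in (0,1)$ I would argue inductively that $\gamma_k > 0$: given $\gamma_k > 0$, the scalar $\alpha_k$ is determined by $L\alpha^2 = (1-\alpha)\gamma_k$, i.e. by the root of $p(\alpha) = L\alpha^2 + \gamma_k\alpha - \gamma_k$; since $p(0) = -\gamma_k < 0$ and $p(1) = L > 0$, the intermediate value theorem yields a root in $(0,1)$, unique because $p$ is strictly increasing on $[0,\infty)$, and then $\gamma_{k+1} = L\alpha_k^2 > 0$ closes the induction, the base case $\alpha_0 \in (0,1)$ being assumed.

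The heart of the proof is the inductive verification of $\phi_k^* \ge f(x_k)$. The base case is immediate since $\phi_0^* = f(x_0)$. For the inductive step I would start from the closed form \eqref{eq:APHD_phi_k_min} for $\phi_{k+1}^*$, substitute $x_{k+1} = z_L(y_k)$ together with the inductive hypothesis $\phi_k^* \ge f(x_k)$, and then use the crucial relation $\gamma_{k+1} = L\alpha_k^2$ to rewrite the coefficient of $\|y_k - x_{k+1}\|^2$: the two quadratic contributions $\tfrac{\alpha_k L}{2}$ and $-\tfrac{\alpha_k^2 L^2}{2\gamma_{k+1}}$ collapse to $-\tfrac{L}{2}(1-\alpha_k)$. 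At this point I would invoke Lemma~\ref{lem:ProximalLowerEstimate} with $\gamma = L$, $y = y_k$, and $x = x_k \in Q$, which gives
\[
  f(x_k) \ge f(x_{k+1}) + L\langle y_k - x_{k+1}, x_k - y_k\rangle + \frac{L}{2}\|y_k - x_{k+1}\|^2,
\]
and substitute this lower bound for the term $(1-\alpha_k)f(x_k)$. The quadratic terms then cancel exactly, leaving
\[
  \phi_{k+1}^* \ge f(x_{k+1}) + L\big\langle y_k - x_{k+1}, \, (1-\alpha_k)(x_k - y_k) + \alpha_k(v_k - y_k)\big\rangle .
\]
Finally, the choice $y_k = \alpha_k v_k + (1-\alpha_k)x_k$ in \eqref{eq:OptimalSteps} makes the vector $(1-\alpha_k)x_k + \alpha_k v_k - y_k$ vanish, so the inner product is zero and $\phi_{k+1}^* \ge f(x_{k+1})$, completing the induction.

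The main obstacle is the bookkeeping in this last step: one must organize the three linear-in-$x$ contributions to $\phi_{k+1}^*$ (from $\phi_k$, from the lower bound supplied by Lemma~\ref{lem:ProximalLowerEstimate}, and from the explicit term $\alpha_k L\langle y_k - z_L(y_k), v_k - y_k\rangle$ in \eqref{eq:APHD_phi_k_min}) so that they assemble into the single inner product against $(1-\alpha_k)x_k + \alpha_k v_k - y_k$, which is precisely the residual annihilated by the definition of $y_k$. Getting the quadratic coefficients to cancel hinges on the identity $\gamma_{k+1} = L\alpha_k^2$, so that relation must be tracked carefully through the computation.
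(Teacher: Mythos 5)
Your proposal is correct and follows essentially the same route as the paper: the recursion for $\alpha_{k+1}$ by eliminating $\gamma_k$, the membership $\alpha_k\in(0,1)$ via the intermediate value theorem and monotonicity of the relevant quadratic, and the induction for $\phi_k^*\ge f(x_k)$ using Lemma~\ref{lem:ProximalLowerEstimate} with $x=x_k$, $y=y_k$, $\gamma=L$, with the quadratic terms cancelling through $\gamma_{k+1}=L\alpha_k^2$ and the linear terms annihilated by $y_k=\alpha_k v_k+(1-\alpha_k)x_k$. The only cosmetic difference is that you phrase the root-finding argument in terms of $L\alpha^2=(1-\alpha)\gamma_k$ rather than $t^2=(1-t)\alpha_k^2$, which is the same equation.
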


\begin{proof}
The equality for $\alpha_{k + 1}$ follows directy from equalities $L \alpha_k^2 = \gamma_{k + 1}$ and 
\eqref{eq:APHD_gamma_k}. Note also that the equation $t^2 = (1 - t) \alpha_k^2$ has a unique positive solution
that lies in the interval $(0, 1)$, since the function $g(t) = t^2 - \alpha_k^2 (1 - t)$ is strictly increasing on 
$[0, 1]$ and satisfies the equalities $g(0) = - \alpha_k^2 < 0$ and $g(1) = 1$. Therefore, $\alpha_k \in (0, 1)$ for all
$k \in \mathbb{N}$.

Let us prove the inequality $\phi_k^* \ge f(x_k)$ by induction in $k$. Suppose that $\phi_k^* \ge f(x_k)$ for some 
$k \in \mathbb{N}$ (recall that $\phi_0^* = f(x_0)$). Applying Lemma~\ref{lem:ProximalLowerEstimate} with $x = x_k$,
$y = y_k$, and $\gamma = L$ one gets
\[
  f(x_k) \ge f(z_L(y_k)) + L \langle y_k - z_L(y_k), x_k - y_k \rangle + \frac{L}{2} \| y_k - z_L(y_k)) \|^2. 
\]
Hence with the use of \eqref{eq:APHD_phi_k_min} and inequality $\phi_k^* \ge f(x_k)$ one obtains
\begin{align*}
  \phi_{k + 1}^* 
  &\ge \begin{aligned}[t] 
    &(1 - \alpha_k) f(x_k) + \alpha_k \left( f(z_L(y_k)) + \frac{L}{2} \| y_k - z_L(y_k) \|^2 \right)
    \\
    &- \frac{\alpha_k^2 L^2}{2 \gamma_{k + 1}} \| y_k - z_L(y_k) \|^2
    + \alpha_k L \langle y_k - z_L(y_k), v_k - y_k \rangle
    \end{aligned}
  \\
  &\ge \begin{aligned}[t]
    &f(z_L(y_k)) + \left( \frac{L}{2} - \frac{\alpha_k^2 L^2}{2 \gamma_{k + 1}} \right) \| y_k - z_L(y_k) \|^2
    \\
    &+ (1 - \alpha_k) 
    L \left\langle y_k - z_L(y_k), x_k - y_k + \frac{\alpha_k}{(1 - \alpha_k)} (v_k - y_k) \right\rangle
    \end{aligned}
\end{align*}
Consequently, $\phi_{k + 1}^* \ge f(x_{k + 1})$ due to equalities \eqref{eq:OptimalSteps}.
\end{proof}

Observe that from \eqref{eq:OptimalSteps} and \eqref{eq:APHD_v_k} it follows that
\begin{align*}
  v_{k + 1} &= v_k - \frac{\alpha_k L}{\gamma_{k + 1}} (y_k - z_L(y_k))
  = v_k - \frac{1}{\alpha_k} (y_k - x_{k + 1})
  \\
  &= - \frac{1}{\alpha_k} (y_k - \alpha_k v_k) + \frac{1}{\alpha_k} x_{k + 1}
  = - \frac{1}{\alpha_k} (1 - \alpha_k) x_k + \frac{1}{\alpha_k} x_{k + 1}
  \\
  &= x_k + \frac{1}{\alpha_k} (x_{k + 1} - x_k).
\end{align*}
Hence with the use of \eqref{eq:OptimalSteps} one gets
\begin{align*}
  y_{k + 1} &= \alpha_{k + 1} v_{k + 1} + (1 - \alpha_{k + 1}) x_{k + 1}
  \\
  &= \alpha_{k + 1} x_k + \frac{\alpha_{k + 1}}{\alpha_k} (x_{k + 1} - x_k) + (1 - \alpha_{k + 1}) x_{k + 1}
  \\
  &= x_{k + 1} + \frac{\alpha_{k + 1}(1 - \alpha_k)}{\alpha_k} (x_{k + 1} - x_k).
\end{align*}
Thus, we arrive at the following method, which we call \textit{accelerated proximal hypodifferential descent method},
whose theoretical scheme is given in Algorithm~\ref{alg:APHD}. 

\begin{algorithm} \label{alg:APHD}
\caption{Accelerated proximal hypodifferential descent method.}

\textbf{Initialisation.} Choose an initial point $x_0 \in Q$ and parameter $\alpha_0 \in (0, 1)$. Set $y_0 = x_0$ and
$k = 0$.

\textbf{Step 1.} Compute $\underline{d} f(y_k)$. Find an optimal solution $x_{k + 1}$ of the problem
\[
  \min_{x \in Q} \enspace \max_{(a, v) \in \underline{d} f(y_k)} (a + \langle v, x - y_k \rangle)
  + \frac{L}{2} \| x - y_k \|^2.
\]

\textbf{Step 2.} Find a solution $\alpha_{k + 1} \in (0, 1)$ of the equation
\[
  \alpha_{k + 1}^2 = (1 - \alpha_{k + 1}) \alpha_k^2
\]
and put
\[
  y_{k + 1} = x_{k + 1} + \frac{\alpha_{k + 1}(1 - \alpha_k)}{\alpha_k} (x_{k + 1} - x_k).
\]
Set $k \leftarrow k + 1$ and go to \textbf{Step 1}.
\end{algorithm}

The following theorem describes the rate of convergence of this method. Let us note that its proof largely repeats the
way in which the rate of convergence of Nesterov's accelerated gradient descent method is estimated
\cite{Nesterov_book}.

\begin{theorem}
Suppose that $f$ attains a global minimum on the set $Q$ and the hypodifferential map $\underline{d} f(\cdot)$ is
consistent on $X$ and a Lipschitzian approximation of $f$ on $X$ with Lipschitz constant $L > 0$. Let $\{ x_k \}$ be 
the sequence generated by the accelerated proximal hypodifferential descent method. Then for any $k \in \mathbb{N}$ one
has
\[
  f(x_k) - f_* 
  \le \frac{4L}{(2 \sqrt{L} + k \sqrt{\gamma_0})^2} \Big( f(x_0) - f_* + \frac{\gamma_0}{2} \| x_0 - x_* \|^2 \Big)
  = \mathcal{O}\left( \frac{1}{k^2} \right),
\]
where $f_* = \min_{x \in Q} f(x_*)$ and $x_*$ is a point of global minimum of $f$ on $Q$.
\end{theorem}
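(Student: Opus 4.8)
The plan is to follow Nesterov's estimating-sequences framework, for which the auxiliary quadratics $\phi_k$ and the decisive inequality $\phi_k^* \ge f(x_k)$ have already been prepared in Lemmas~\ref{lem:ProximalLowerEstimate} and~\ref{lem:MethodDerivation}. I would introduce the convergence factors $\lambda_k = \prod_{i=0}^{k-1}(1-\alpha_i)$ (with $\lambda_0 = 1$), so that the recursion $\gamma_{k+1} = (1-\alpha_k)\gamma_k$ from \eqref{eq:APHD_gamma_k} gives $\gamma_k = \lambda_k \gamma_0$ for every $k$. First I would verify, by induction on $k$, the defining property of an estimating sequence, namely
\[
  \phi_k(x) \le (1 - \lambda_k) f(x) + \lambda_k \phi_0(x) \quad \forall x \in Q.
\]
The base case $k = 0$ is immediate since $\lambda_0 = 1$. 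For the inductive step I would substitute the recursive definition of $\phi_{k+1}$ and bound its affine part from below through Lemma~\ref{lem:ProximalLowerEstimate} applied with $\gamma = L$ and $y = y_k$, which shows that the bracketed expression multiplying $\alpha_k$ does not exceed $f(x)$. The coefficients of $f(x)$ and $\phi_0(x)$ then collapse to $1 - \lambda_{k+1}$ and $\lambda_{k+1}$ precisely because $\lambda_{k+1} = (1-\alpha_k)\lambda_k$.

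Next I would combine this with $\phi_k^* \ge f(x_k)$ from Lemma~\ref{lem:MethodDerivation}. Evaluating the estimating-sequence inequality at the minimizer $x_*$ and using $\phi_k^* = \min_x \phi_k(x) \le \phi_k(x_*)$ yields
\[
  f(x_k) \le \phi_k^* \le (1 - \lambda_k) f_* + \lambda_k \phi_0(x_*),
\]
so that $f(x_k) - f_* \le \lambda_k\big( f(x_0) - f_* + \tfrac{\gamma_0}{2}\|x_0 - x_*\|^2 \big)$ after inserting $\phi_0(x_*) = f(x_0) + \tfrac{\gamma_0}{2}\|x_0 - x_*\|^2$.

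It then remains to bound $\lambda_k$. Here I would use the two identities $\gamma_{k+1} = (1-\alpha_k)\gamma_k$ and $L\alpha_k^2 = \gamma_{k+1}$, which together give $\alpha_k = \sqrt{\lambda_{k+1}\gamma_0/L}$, to control the increment of $1/\sqrt{\lambda_k}$. Writing $\lambda_k - \lambda_{k+1} = \alpha_k \lambda_k$ and bounding $\sqrt{\lambda_k} + \sqrt{\lambda_{k+1}} \le 2\sqrt{\lambda_k}$ leads to
\[
  \frac{1}{\sqrt{\lambda_{k+1}}} - \frac{1}{\sqrt{\lambda_k}} \ge \frac{\alpha_k}{2\sqrt{\lambda_{k+1}}} = \frac{1}{2}\sqrt{\frac{\gamma_0}{L}}.
\]
Telescoping these inequalities for the indices $0,\dots,k-1$ and using $\lambda_0 = 1$ gives $1/\sqrt{\lambda_k} \ge 1 + \tfrac{k}{2}\sqrt{\gamma_0/L} = (2\sqrt{L}+k\sqrt{\gamma_0})/(2\sqrt{L})$, i.e. $\lambda_k \le 4L/(2\sqrt{L} + k\sqrt{\gamma_0})^2$. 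Substituting this into the bound of the previous paragraph completes the proof.

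As for difficulties, essentially all of the genuinely nonsmooth content is already absorbed into Lemma~\ref{lem:ProximalLowerEstimate} (the surrogate lower bound replacing the gradient inequality) and Lemma~\ref{lem:MethodDerivation} (the relation $\phi_k^* \ge f(x_k)$), so the remaining argument is the purely algebraic Nesterov template. The only step requiring care is the telescoping estimate for $\lambda_k$, where one must exploit the geometric recursion $\gamma_{k+1}=(1-\alpha_k)\gamma_k$ and the quadratic relation $L\alpha_k^2 = \gamma_{k+1}$ simultaneously; the rest is routine bookkeeping of coefficients.
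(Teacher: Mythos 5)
Your proposal is correct and follows essentially the same route as the paper: the same estimating-sequence inequality $\phi_k(x) \le (1-\lambda_k)f(x) + \lambda_k\phi_0(x)$ proved by induction via Lemma~\ref{lem:ProximalLowerEstimate}, combined with $\phi_k^* \ge f(x_k)$ from Lemma~\ref{lem:MethodDerivation}, and the identical telescoping bound on $1/\sqrt{\lambda_k}$. The only cosmetic difference is that you use the exact identity $\gamma_k = \gamma_0\lambda_k$ where the paper proves only the inequality $\gamma_k \ge \gamma_0\lambda_k$ (which is all that is needed); both are valid.
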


\begin{proof}
Define $\lambda_0 = 1$ and $\lambda_{k + 1} = (1 - \alpha_k) \lambda_k$ for all $k \in \mathbb{N}$. Let us check that
\begin{equation} \label{eq:EstimatingSeq}
  \phi_k(x) \le (1 - \lambda_k) f(x) + \lambda_k \phi_0(x) \quad \forall x \in X, \: k \in \mathbb{N}.
\end{equation}
Indeed, in the case $k = 0$ the inequality is obvious. Suppose that it is satisfied for some $k \in \mathbb{N}$. 
Recall that 
\begin{align*}
  \phi_{k + 1}(x) 
  = &(1 - \alpha_k) \phi_k(x) + \alpha_k \left( f(z_L(y_k)) + \frac{L}{2} \| y_k - z_L(y_k) \|^2 \right)
  \\
  &+ \alpha_k L \langle y_k - z_L(y_k), x - y_k \rangle.
\end{align*}
Applying Lemma~\ref{lem:ProximalLowerEstimate} with $y = y_k$ one gets that
\[
  \phi_{k + 1}(x) \le (1 - \alpha_k) \phi_k(x) + \alpha_k f(x) \quad \forall x \in X.
\]
Hence with the use of inequality \eqref{eq:EstimatingSeq} one obtains that
\begin{align*}
  \phi_{k + 1}(x) &\le (1 - \alpha_k) \Big( (1 - \lambda_k) f(x) + \lambda_k \phi_0(x) \Big) + \alpha_k f(x)
  \\
  &= (1 - \lambda_{k + 1}) f(x) + \lambda_{k + 1} \phi_0(x)
\end{align*}
for any $x \in X$. Thus, by induction inequality \eqref{eq:EstimatingSeq} holds true for all $k \in \mathbb{N}$.

By the construction of the method one has $f(x_k) \le \phi_k^*$. Therefore by applying inequality
\eqref{eq:EstimatingSeq} one obtains that
\begin{align*}
  f(x_k) \le \phi_k^* = \min_{x \in X} \phi_k(x) \le \min_{x \in Q} \phi_k(x)
  &\le \inf_{x \in Q} \Big( (1 - \lambda_k) f(x) + \lambda_k \phi_0(x) \Big) 
  \\
  &\le (1 - \lambda_k) f(x_*) + \lambda_k \phi_0(x_*)
\end{align*}
or, equivalently,
\begin{equation} \label{eq:EstimSeqIneq}
  f(x_k) - f_* \le \lambda_k (\phi_0(x_*) - f_*)
  = \lambda_k \left[ f(x_0) - f_* + \frac{\gamma_0}{2} \| x_0 - x_* \|^2 \right]
\end{equation}
for any $k \in \mathbb{N}$.

Let us estimate $\lambda_k$. First, note that $\gamma_k \ge \gamma_0 \lambda_k$ for all $k \in \mathbb{N}$. Indeed, 
$\gamma_0 = \gamma_0 \lambda_0$, since $\lambda_0 = 1$ by definition. Suppose that the inequality holds true for some
$k \in \mathbb{N}$. Then
\[
  \gamma_{k + 1} = (1 - \alpha_k) \gamma_k \ge (1 - \alpha_k) \gamma_0 \lambda_k = \gamma_0 \lambda_{k + 1},
\]
which by induction implies that $\gamma_k \ge \gamma_0 \lambda_k$ or, equivalently, 
$\lambda_k \le \gamma_k / \gamma_0$ for all $k \in \mathbb{N}$.

For any $k \in \mathbb{N}$ one has
\[
  \frac{1}{\sqrt{\lambda_{k + 1}}} - \frac{1}{\sqrt{\lambda_k}} 
  = \frac{\sqrt{\lambda_k} - \sqrt{\lambda_{k + 1}}}{\sqrt{\lambda_k \lambda_{k + 1}}}
  = \frac{\lambda_k - \lambda_{k + 1}}{(\sqrt{\lambda_k} + \sqrt{\lambda_{k + 1}})\sqrt{\lambda_k \lambda_{k + 1}}}.
\]
Recall that $\lambda_{k + 1} = (1 - \alpha_k) \lambda_k$ and $\alpha_k \in (0, 1)$. Therefore
\begin{align*}
  \frac{1}{\sqrt{\lambda_{k + 1}}} - \frac{1}{\sqrt{\lambda_k}}
  &= \frac{\lambda_k - (1 - \alpha_k) \lambda_k}
  {(\sqrt{\lambda_k} + \sqrt{\lambda_{k + 1}})\sqrt{\lambda_k \lambda_{k+ 1}}}
  \\
  &= \frac{\alpha_k}{(1 + \sqrt{1- \alpha_k}) \sqrt{\lambda_{k + 1}}}
  \ge \frac{\alpha_k}{2 \sqrt{\lambda_{k + 1}}}.
\end{align*}
Hence applying the inequality $\lambda_{k + 1} \le \gamma_{k + 1} / \gamma_0$ and the equality 
$L \alpha_k^2 = \gamma_{k + 1}$ (see Lemma~\ref{lem:MethodDerivation}) one obtains that
\[
  \frac{1}{\sqrt{\lambda_{k + 1}}} - \frac{1}{\sqrt{\lambda_k}}
  \ge \frac{\alpha_k}{2} \sqrt{\frac{\gamma_0}{\gamma_{k + 1}}} = \frac{1}{2} \sqrt{\frac{\gamma_0}{L}}
\]
for any $k \in \mathbb{N}$, which implies that
\[
  \frac{1}{\sqrt{\lambda_k}} \ge 1 + \frac{k}{2} \sqrt{\frac{\gamma_0}{L}} \quad
  \Longleftrightarrow \quad \lambda_k \le \frac{4L}{(2 \sqrt{L} + k \sqrt{\gamma_0})^2}
  \quad \forall k \in \mathbb{N}.
\]
Combining this inequality with \eqref{eq:EstimSeqIneq} one obtains the required result.
\end{proof}

\section{Conclusions}

We presented a general theory of hypodifferentials of nonsmooth convex functions and its applications to nonsmooth
convex optimization. The key difference between subdifferentials and hypodifferentials is the fact that, in a sense,
hypodifferentials much closer resemble the gradient of a smooth convex function in its properties than
subdifferentials. In particular, hypodifferentials possess many important properties of the gradient that are widely
used in convex optimization (such as continuity and Lipschitz continuity, the property of being a Lipschitzian
approximation, etc.), that subdifferentials do not have. This important feature of hypodifferentials allowed us to
extend some of the results on smooth convex functions/convex optimization methods to the general nonsmooth setting,
which is apparently impossible to do in the context of the theory of subdifferentials. For example, as we have shown,
hypodifferentials can be used to generalize accelerated gradient methods to the case of nonsmooth convex functions. 
Thus, from the theoretical point of view, the theory of hypodifferentials provides one with new useful tools for
studying nonsmooth convex functions and problems.

Although the calculus rules and examples presented in the paper show how hypodifferentials of nonsmooth convex
functions can be computed in particular cases, it must be said that hypodifferentials are often much more cumbersome and
hard to deal with from the computational point of view than subdifferentials/subgradients. They provide significantly
more information about the function than subdifferentials/subgradients at the expense of much higher computational cost
and memory consumption. In particular, the higher rate of convergence of convex optimization methods based on
hypodifferentials in comparison with standard nonsmooth convex optimization methods is achieved by a significant
increase of complexity of each iteration and each oracle call, which makes them inapplicable to large scale problems.
However, in some applications (especially in the case of problems of moderate dimensions), the higher rate of
convergence of accelerated hypodifferential descent methods might compensate for the significantly higher complexity of
each iteration to make such methods competitive in comparison with classical subgradient/bundle methods.

\bibliographystyle{abbrv}  
\bibliography{Hypodiff_bibl}

\begin{thebibliography}{10}

\bibitem{BarberoLopez}
J.~L. A.~Barbero and J.~R. Dorronsoro.
\newblock An accelerated {M}{D}{M} algorithm for {S}{V}{M} training.
\newblock In {\em Advances in Computational Intelligence and Learning.
  Proceedings Of ESANN 2008 Conference}, pages 421--426, 2008.

\bibitem{AtenasSagastizabal}
F.~Atenas, C.~Sagastiz\'{a}bal, P.~Silva, and M.~Solodov.
\newblock A unified analysis of descent sequences in weakly convex
  optimization, including convergence rates for bundle methods.
\newblock {\em SIAM J. Optim.}, 33:89--115, 2023.

\bibitem{BolteNguyenPeypouquetSuter}
J.~Bolte, T.~P. Nguyen, J.~Peypouquet, and B.~W. Suter.
\newblock From error bounds to the complexity of first-order descent methods
  for convex functions.
\newblock {\em Math. Program.}, 165:471--507, 2017.

\bibitem{BonnansShapiro}
J.~F. Bonnnans and A.~Shapiro.
\newblock {\em Perturbation analysis of optimization problems}.
\newblock Springer, New York, 2000.

\bibitem{Bubeck}
S.~Bubeck.
\newblock Convex optimization: algorithms and complexity.
\newblock {\em Found. Trends Mach. Learn.}, 8:231--357, 2015.

\bibitem{DanilovaDvurechnsky}
M.~Danilova, P.~Dvurechensky, A.~Gasnikov, E.~Gorbunov, S.~Guminov,
  D.~Kamzolov, and I.~Shibaev.
\newblock Recent theoretical advances in non-convex optimization.
\newblock In A.~Nikeghbali, P.~M. Pardalos, A.~M. Raigorodskii, and M.~T.
  Rassias, editors, {\em High-Dimensional Optimization and Probability}, pages
  79--163. Springer, Cham, 2022.

\bibitem{Demyanov_InCollection_1988}
V.~F. Demyanov.
\newblock Continuous generalized gradients for nonsmooth functions.
\newblock In A.~Kurzhanski, K.~Neumann, and D.~Pallaschke, editors, {\em
  Optimization, Parallel Processing and Applications}, pages 24--27. Springer,
  Berlin, Heidelberg, 1988.

\bibitem{Demyanov1988}
V.~F. Demyanov.
\newblock On codifferentiable functions.
\newblock {\em Vestn. Leningr. Univ., Math.}, 2:22--26, 1988.

\bibitem{Demyanov1989}
V.~F. Demyanov.
\newblock Smoothness of nonsmooth functions.
\newblock In F.~Clarke, V.~Demyanov, and F.~Giannesssi, editors, {\em Nonsmooth
  Optimization and Related Topics}, pages 79--88. Springer, Boston, 1989.

\bibitem{DemyanovRubinov}
V.~F. Demyanov and A.~M. Rubinov.
\newblock {\em Constructive Nonsmooth Analysis}.
\newblock Peter Lang, Frankfurt am Main, 1995.

\bibitem{Quasidifferentiability_book}
V.~F. Demyanov and A.~M. Rubinov, editors.
\newblock {\em Quasidifferentiability and Related Topics}.
\newblock Kluwer Academic Publishers, Dordrecht, 2000.

\bibitem{QuasidiffMechanics}
V.~F. Demyanov, G.~E. Stavroulakis, L.~N. Polyakova, and P.~D. Panagiotopoulos.
\newblock {\em Quasidifferentiability and Nonsmooth Modelling in Mechanics,
  Engineering and Economics}.
\newblock Kluwer Academic Publishers, Dordrecht, 1996.

\bibitem{DemyanovTamasyan2011}
V.~F. Demyanov and G.~{\relax Sh}. Tamasyan.
\newblock Exact penalty functions in isoperimetric problems.
\newblock {\em Optim.}, 60:153--177, 2011.

\bibitem{DemyanovTamasyan2014}
V.~F. Demyanov and G.~{\relax Sh}. Tamasyan.
\newblock Direct methods in the parametric moving boundary variational problem.
\newblock {\em Numer. Funct. Anal. Optim.}, 35:932--961, 2014.

\bibitem{Dolgopolik_COCV}
M.~Dolgopolik.
\newblock Nonsmooth problems of calculus of variations via codifferentiation.
\newblock {\em ESAIM: Control Optim. Calc. Var.}, 20:1153--1180, 2014.

\bibitem{Dolgopolik_COCV_2}
M.~Dolgopolik.
\newblock Constrained nonsmooth problems of the calculus of variations.
\newblock {\em ESAIM: Control Optim. Calc. Var.}, 27:1--35, 2021.

\bibitem{Dolgopolik_CodiffCalc}
M.~V. Dolgopolik.
\newblock Codifferential calculus in normed spaces.
\newblock {\em J. Math. Sci.}, 173:441--462, 2011.

\bibitem{Dolgopolik_CodiffDescent}
M.~V. Dolgopolik.
\newblock A convergence analysis of the method of codifferential descent.
\newblock {\em Comput. Optim. Appl.}, 71:879--913, 2018.

\bibitem{Dolgopolik_HypodiffDescent}
M.~V. Dolgopolik.
\newblock The method of codifferential descent for convex and global piecewise
  affine optimization.
\newblock {\em Optim. Methods Softw.}, 35:1191--1222, 2020.

\bibitem{Dolgopolik_GlobOptCond}
M.~V. Dolgopolik.
\newblock New global optimality conditions for nonsmooth {D}{C} optimization
  problems.
\newblock {\em J. Glob. Optim.}, 76:25--55, 2020.

\bibitem{Dolgopolik_AccelTechnique}
M.~V. Dolgopolik.
\newblock An efficient acceleration technique for methods for finding the
  nearest point in a polytope and computing the distance between two polytopes.
\newblock {\em ar{X}iv: 2205.04553}, 2022.

\bibitem{DroriTaylor}
Y.~Drori and A.~B. Taylor.
\newblock Efficient first-order methods for convex minimization: a constructive
  approach.
\newblock {\em Math. Program.}, 184:183--220, 2020.

\bibitem{DroriTeboulle}
Y.~Drori and M.~Teboulle.
\newblock An optimal variant of {K}elley's cutting-plane method.
\newblock {\em Math. Program.}, 160:321--351, 2016.

\bibitem{DvurechenskyShternStaudigl}
P.~Dvurechensky, S.~Shtern, and M.~Staudigl.
\newblock First-order methods for convex optimization.
\newblock {\em EURO J. Comput. Optim.}, 9:1--27, 2021.

\bibitem{EkelandTemam}
I.~Ekeland and R.~Temam.
\newblock {\em Convex Analysis and Variational Problems}.
\newblock SIAM, Philadelphia, 1999.

\bibitem{Fominyh_OptimLetters}
A.~V. Fominyh, V.~V. Karelin, and L.~N. Polyakova.
\newblock Application of the hypodifferential descent method to the problem of
  constructing an optimal control.
\newblock {\em Optim. Lett.}, 12:1825--1839, 2018.

\bibitem{Fominyh}
A.~V. Fominykh.
\newblock Method of subdifferential and hypodifferential descent in the problem
  of constructing an integrally constrained program control.
\newblock {\em Autom. Remote Control.}, 78:608--617, 2017.

\bibitem{FreundLu}
R.~M. Freund and H.~Lu.
\newblock New computational guarantees for solving convex optimization problems
  with first order methods, via a function growth condition measure.
\newblock {\em Math. Program.}, 170:445--477, 2018.

\bibitem{Gilbert}
E.~G. Gilbert.
\newblock An iterative procedure for computing the minimum of a quadratic form
  on a convex set.
\newblock {\em SIAM J. Control}, 4:61--80, 1966.

\bibitem{GrapigliaNesterov}
G.~N. Grapiglia and Y.~Nesterov.
\newblock Accelerated regularized {N}ewton methods for minimizing composite
  convex functions.
\newblock {\em SIAM J. Optim.}, 29:77--99, 2019.

\bibitem{IoffeTihomirov}
A.~D. Ioffe and V.~M. Tihomirov.
\newblock {\em Theory of Extremal Problems.}
\newblock North-Holland Publishing Company, Amsterdam etc., 1979.

\bibitem{JohnstoneMoulin}
P.~R. Johnstone and P.~Mouline.
\newblock Faster subgradient methods for functions with {H}\"{o}lderian growth.
\newblock {\em Math. Program.}, 180:417--450, 2020.

\bibitem{KimFessler2016}
D.~Kim and J.~A. Fessler.
\newblock Optimized first-order methods for smooth convex minimization.
\newblock {\em Math. Program.}, 159:81--107, 2016.

\bibitem{KimFessler2018}
D.~Kim and J.~A. Fessler.
\newblock Adaptive restrat of the optimized gradient method for convex
  optimization.
\newblock {\em J. Optim. Theory Appl.}, 178:240--263, 2018.

\bibitem{LuoChen}
H.~Luo and L.~Chen.
\newblock From differential equation solvers to accelerated first-order method
  for convex optimization.
\newblock {\em Math. Program.}, 195:735--781, 2022.

\bibitem{Nesterov83}
Y.~Nesterov.
\newblock A method for unconstrained convex minimization problem with the rate
  of convergence ${O}(1/k^2)$.
\newblock {\em Dokl. Akad. Nauk USSR}, 269:543--547, 1983.

\bibitem{Nesterov_book}
Y.~Nesterov.
\newblock {\em Introductory Lectures on Convex Optimization. A Basic Course}.
\newblock Kluwer Academic Publishers, London, 2004.

\bibitem{Nesterov2005_2}
Y.~Nesterov.
\newblock Excessive gap technique in nonsmooth convex minimization.
\newblock {\em SIAM J. Optim.}, 16:235--249, 2005.

\bibitem{Nesterov2005}
Y.~Nesterov.
\newblock Smooth minimization of non-smooth functions.
\newblock {\em Math. Program.}, 103:127--152, 2005.

\bibitem{Nesterov2009}
Y.~Nesterov.
\newblock Primal-dual subgradient methods for convex problems.
\newblock {\em Math. Program.}, 120:221--259, 2009.

\bibitem{Neumaier}
A.~Neumaier.
\newblock {O}{S}{G}{A}: a fast subgradient algorithm with optimal complexity.
\newblock {\em Math. Program.}, 158:1--21, 2016.

\bibitem{Penot}
J.~Penot.
\newblock {\em Calculus Without Derivatives}.
\newblock Springer, New York, 2013.

\bibitem{Polak}
E.~Polak.
\newblock {\em Optimization, Algorithms and Consistent Approximations}.
\newblock Springer, New York, 1997.

\bibitem{Rockafellar}
R.~T. Rockafellar.
\newblock {\em Convex Analysis}.
\newblock Princeton University Press, Princeton, 1970.

\bibitem{RockafellarWets}
R.~T. Rockafellar and R.~J.-B. Wets.
\newblock {\em Variational Analysis}.
\newblock Springer-Verlar, Berlin, 1998.

\bibitem{ShiDuJordanSu}
B.~Shi, S.~S. Du, M.~I. Jordan, and W.~J. Su.
\newblock Understanding the acceleration phenomenon via high-resolution
  differential equations.
\newblock {\em Math. Program.}, 195:79--148, 2022.

\bibitem{Shor}
N.~Z. Shor.
\newblock {\em Minimization Methods for Nondifferentiable Functions}.
\newblock Springer, Berlin, 1985.

\bibitem{TaylorHendrickxGlineur}
A.~B. Taylor, J.~M. Hendrickx, and F.~Glineur.
\newblock Exact worst-case convergence rates of the proximal gradient method
  for composite convex minimization.
\newblock {\em J. Optim. Theory Appl.}, 178:455--476, 2018.

\bibitem{Teboulle}
M.~Teboulle.
\newblock A simplified view of first order methods for optimization.
\newblock {\em Math. Program.}, 170:67--96, 2018.

\bibitem{Thibault}
L.~Thibault.
\newblock {\em Unilateral Variational Analysis in Banach Spaces. Part {I}:
  General Theory}.
\newblock World Scientific, Singapore, 2023.

\bibitem{Vial}
J.~Vial.
\newblock Strong and weak convexity of sets and functions.
\newblock {\em Math. Oper. Res.}, 8:231--259, 1983.

\bibitem{Wolfe}
P.~Wolfe.
\newblock Finding the nearest point in a polytope.
\newblock {\em Math. Program.}, 11:128--149, 1976.

\bibitem{Xu}
Y.~Xu.
\newblock Accelerated first-order primal-dual proximal methods for linearly
  constrained composite convex programming.
\newblock {\em SIAM J. Optim.}, 27:1459--1484, 2017.

\bibitem{YangLin}
T.~Yang and Q.~Lin.
\newblock {R}{S}{G}: beating subgradient method without smoothness and strong
  convexity.
\newblock {\em J. Mach. Learn. Res.}, 19:1--33, 2018.

\bibitem{Zalinescu}
C.~Zalinescu.
\newblock {\em Convex Analysis in General Vector Spaces}.
\newblock World Scientific, Singapore, 2002.

\end{thebibliography}

\end{document}